\documentclass[reqno, 10pt]{amsart}
\usepackage{upgreek}
\usepackage{mathtools}
\usepackage{amssymb}
\usepackage{mathrsfs}
\usepackage{amsthm}
\usepackage{enumerate}
\numberwithin{equation}{section} 

\usepackage{soul}
\usepackage{times}

\usepackage{textcomp}

\usepackage[scr=boondox]{mathalpha}
\usepackage{xcolor}
\usepackage{srcltx}
\usepackage{thmtools}
\usepackage[left=1in, right=1in, top=1.25in, bottom=1.25in]{geometry}

\usepackage{tikz}
\usetikzlibrary{shapes,arrows,arrows.meta}

\usepackage{multicol}
\usepackage{graphicx}
\usepackage[colorlinks,allcolors=blue!75!black]{hyperref}
\usepackage{cancel}

\makeatletter
\newcommand{\leqnomode}{\tagsleft@true\let\veqno\@@leqno}
\makeatother

\allowdisplaybreaks

\newcommand{\bbL}{\mathbb{L}}

\newcommand{\bbV}{\mathbb{V}}

\def\bA{\mathbf{A}}
\def\bB{\mathbf{B}}
\def\bC{\mathbf{C}}

\def\bH{\mathbf{H}}
\def\bI{\mathbf{I}}
\def\bL{\mathbf{L}}
\def\bM{\mathbf{M}}
\def\bN{\mathbf{N}}
\def\bP{\mathbf{P}}

\def\bT{\mathbf{T}}

\def\bV{\mathbf{V}}
\def\bW{\mathbf{W}}

\def\ba{\mathbf{a}}
\def\bb{\mathbf{b}}
\def\be{\mathbf{e}}
\def\bg{\mathbf{g}}
\def\bh{\mathbf{h}}
\def\bj{\mathbf{j}}

\def\bp{\mathbf{p}}
\def\br{\mathbf{r}}
\def\bt{\mathbf{t}}
\def\bu{\mathbf{u}}
\def\bv{\mathbf{v}}
\def\bw{\mathbf{w}}
\def\bx{\mathbf{x}}
\def\by{\mathbf{y}}
\def\bz{\mathbf{z}}
\renewcommand{\bf}{\mathbf{f}}

\def\sfd{\mathsf{d}}

\def\balpha{\boldsymbol{\alpha}}
\def\bbeta{\boldsymbol{\beta}}

\def\bchi{\boldsymbol{\chi}}

\def\bgamma{\boldsymbol{\gamma}}
\def\bvarphi{\boldsymbol{\varphi}}

\def\bnu{\boldsymbol{\nu}}


\newcommand{\wh}{\widehat}
\newcommand{\wt}{\widetilde}

\newcommand{\LL}{\mathrm{L}}
\newcommand{\HH}{\mathrm{H}}
\newcommand{\NN}{\mathrm{N}}

\newcommand{\VV}{\mathrm{V}}
\newcommand{\RR}{\mathrm{R}}
\newcommand{\WW}{\mathrm{W}}

\newcommand{\cJ}{\mathcal{J}}

\newcommand{\cU}{\mathcal{U}}
\newcommand{\cV}{\mathcal{V}}

\newcommand{\sL}{\mathscr{L}}
\newcommand{\sG}{\mathscr{G}}

\newcommand{\dd}[1][y]{\if#1y\,\fi{\mathrm d}}
\DeclareMathOperator{\vspan}{span}

\renewcommand{\div}{\operatorname{div}}

\DeclareMathOperator{\curl}{curl}
\DeclareMathOperator{\bcurl}{\boldsymbol{\curl}}
\newcommand{\tr}{\operatorname*{tr}}

\newcommand{\G}{\Gamma}
\newcommand{\nablaG}{\nabla_\G}
\newcommand{\nablaM}{\nabla_M}
\newcommand{\divG}{\div_\G}
\newcommand{\divM}{\div_M}
\newcommand{\DeltaG}{\Delta_\G}
\newcommand{\DeltaB}{\Delta_B}
\newcommand{\DeltaH}{\Delta_H}
\newcommand{\DeltaS}{\Delta_S}
\newcommand{\uD}{\underline{D}}

\theoremstyle{plain}

\newtheorem{thm}{Theorem}[section]
\newtheorem{proposition}[thm]{Proposition}
\newtheorem{lemma}[thm]{Lemma}
\newtheorem{corollary}[thm]{Corollary}
\newtheorem{definition}[thm]{Definition}
\newtheorem{remark}[thm]{Remark}

\theoremstyle{definition}
\newtheorem{appendix-proposition}{Proposition}[section]

\theoremstyle{remark}
\newtheorem{appendix-remark}{Remark}[section]

\def\itemautorefname~#1\null{%
(#1)\null
}

\newenvironment{proof-claim}[1]{\noindent\textit{Proof of \autoref#1.}}{\hfill$\square$}
\newenvironment{proof-thm}[1]{\vskip1em \noindent\textbf{Proof of \autoref#1.}\\}{\hfill$\square$\\}

\makeatletter
\renewcommand\subsection{\@startsection{subsection}{2}%
\z@{.5\linespacing\@plus.7\linespacing}{.1\linespacing}%
{\normalfont\centering\scshape}}
\makeatother

\makeatletter
\renewcommand\subsubsection{\@startsection{subsubsection}{3}%
\z@{.5\linespacing\@plus.7\linespacing}{.1\linespacing}%
{\normalfont\raggedright\scshape}}
\makeatother

\makeatletter
\def\@tocline#1#2#3#4#5#6#7{\relax
\ifnum #1>\c@tocdepth 
\else
\par \addpenalty\@secpenalty\addvspace{#2}%
\begingroup \hyphenpenalty\@M
\@ifempty{#4}{%
\@tempdima\csname r@tocindent\number#1\endcsname\relax
}{%
\@tempdima#4\relax
}%
\parindent\z@ \leftskip#3\relax \advance\leftskip\@tempdima\relax
\rightskip\@pnumwidth plus4em \parfillskip-\@pnumwidth
#5\leavevmode\hskip-\@tempdima
\ifcase #1
\or\or \hskip 1em \or \hskip 2em \else \hskip 3em \fi%
#6\nobreak\relax
\dotfill\hbox to\@pnumwidth{\@tocpagenum{#7}}\par
\nobreak
\endgroup
\fi}
\makeatother

\begin{document}

\title[$\LL^p$-based Vector-valued Elliptic Theory on closed minimally regular manifolds]{$\LL^p$-based Sobolev theory on closed manifolds of minimal regularity: Vector-valued problems}

\author[G.A.~Benavides]{Gonzalo A.~Benavides}
\author[R.H.~Nochetto]{Ricardo H.~Nochetto}
\author[M.~Shakipov]{Mansur Shakipov}

\address{Department of Mathematics, University of Maryland College Park, MD 20742}
\email{gonzalob@umd.edu, rhn@umd.edu, shakipov@umd.edu}

\subjclass[2020]{35A15, 35B65, 35D30, 58J, 35J20, 35Q35, 35Q30, 76D07}
\keywords{PDEs on manifolds, Sobolev regularity, well-posedness, Navier--Stokes, Stokes operator, Leray projector, duality, Banach--Ne\v{c}as--Babu\v{s}ka, Babu\v{s}ka--Brezzi, inf-sup, Fredholm alternative, Galerkin method, fixed-point}

\begin{abstract}
This paper is the second part of a two-paper series, initiated in \cite{BenavidesNochettoShakipov2025-a} for scalar PDEs on hypersurfaces, and is concerned with the well-posedness and $\LL^p$-based Sobolev regularity of vector-valued PDEs of interest in fluid dynamics.
This family of PDEs includes the (stationary) Bochner Laplace, tangent Stokes and Oseen, and tangent Navier--Stokes equations.
We present several strong, weak and ultra-weak formulations of these problems on compact, connected $d$-dimensional manifolds without boundary embedded in $\RR^{d+1}$.
We prove $\WW^{m,p}$-regularity for any $p \in (1,\infty)$ for manifolds of minimal regularity $C^{m+1}$ or $C^{m,1}$ for $m\ge1$.
Building upon the $\LL^p$-based scalar elliptic theory from \cite{BenavidesNochettoShakipov2025-a}, we develop a parametrization-free and purely variational approach that resorts to classical results such as the Banach--Ne\v{c}as--Babu\v{s}ka theorem and the generalized Babu\v{s}ka--Brezzi theory in reflexive Banach spaces.
In particular, by exploiting the manifold closedness, we decouple the velocity and pressure variables in the tangent Stokes problem to establish their higher-regularity $\bW^{m,p} \times \WW^{m-1,p}$ ($m \geq 2$) as a consequence of the $\LL^p$-based well-posedness and regularity theory for the Laplace--Beltrami and Bochner--Laplace operators.
We study spectral and regularity properties of an appropriate Stokes operator, and apply them to show existence of solutions for the Navier--Stokes equations for $p=2$ and $d \leq 4$.
We next extend the well-posedness to $p > 2$ and prove higher-order $\LL^p$-based regularity.
We finally examine alternative choices to the Bochner Laplace operator that are useful in fluid dynamics.
\end{abstract}

\maketitle

\tableofcontents

\section{Introduction}
Let $\G$ be a $d$-dimensional ($d \geq 2$) compact, connected and sufficiently regular (to be specified later) manifold without boundary embedded in $\RR^{d+1}$ and denote by $\bnu = (\nu_i)_{i=1}^{d+1}$ its outward unit normal.
Throughout this work, we are concerned with the following three vector-valued problems:

\noindent\begin{minipage}{0.9\textwidth}
\begin{center}
\begin{subequations} \leqnomode
\begin{align}
\label{eq:manifold-BochnerPoisson} \tag{\textbf{VL}} & \text{\qquad \qquad \qquad \textbf{V}ector-\textbf{L}aplace} &  -\DeltaB \bu & = \bf, & & \\
\label{eq:manifold-stokes}  \tag{\textbf{S}} & \text{\qquad \qquad \qquad Tangent \textbf{S}tokes} & - \DeltaB \bu + \nablaG \uppi & = \bf, & & \divG \bu = g, \\
\label{eq:manifold-Navier--Stokes} \tag{\textbf{NS}} & \text{\qquad \qquad \qquad Tangent \textbf{N}avier--\textbf{S}tokes} & - \DeltaB \bu + (\nablaG \bu) \bu + \nablaG \uppi & = \bf, & & \divG \bu = g,
\end{align}
\end{subequations}
\end{center}
\end{minipage}
\noindent\begin{minipage}{0.09\textwidth}
\setcounter{equation}{0}
\begin{equation} \label{our-problems-of-interest}
\quad
\end{equation}
\end{minipage} \vskip1em
\noindent
where $g$ is a given scalar field with vanishing mean on $\G$ and $\bf$ is a vector field everywhere tangential to $\G$.
The operators $\nablaG$ and $\divG$ respectively denote the \emph{covariant gradient} and \emph{covariant divergence};
see \autoref{sec:difgeo-Sobspace} below and, e.g., \cite[Sections 2-3]{BenavidesNochettoShakipov2025-a}.
The operator $\DeltaB := \bP \divG \nablaG$ is usually referred to as the \emph{Bochner Laplacian} \cite[\textsection 3.2]{JankuhnOlshanskiiReusken2018}, \cite{Rosenberg1997} or \emph{rough Laplacian} \cite{HansboLarsonLarsson2020}, where $\bP := \bI - \bnu \otimes \bnu$ is the projection operator onto the tangent plane to $\G$.
The unknowns in \eqref{our-problems-of-interest} are the tangential vector field $\bu: \G \rightarrow \RR^{d+1}$ and the scalar field $\uppi: \G \rightarrow \RR$ with vanishing mean.

The problems \eqref{our-problems-of-interest} are of great importance as prototypical constituents of the modeling of a great variety of physical phenomena on thin films.
For instance, \eqref{eq:manifold-stokes} and \eqref{eq:manifold-Navier--Stokes} are the basis in the study of fluid flows, with paramount theoretical \cite{EbinMarsden:1970,Miura-PartI,Miura-PartII,Miura-PartIII,Miura-PartIII-erratum} and practical importance, e.g.~\cite{LagerwallScalia2012,NeedlemanDogic2017,DoostmohammadiEtAl2018}.
The governing system of geometric partial differential equations may be derived either by a formal asymptotic expansion \cite{ElliottStinner2010,Nitschke2018,NitschkeReutherVoigt2019} or by a rigorous approach \cite{Miura-PartIII,Miura2025}.

This work is devoted to establishing $\LL^p$-based ($1<p<\infty$) well-posedness and Sobolev regularity results for appropriate weak formulations of problems \eqref{our-problems-of-interest} in terms of the regularity of the data ($\bf$ and $g$) and manifold $\G$.    
We stress that, unlike what is typically done in the differential geometry literature, we do not assume $\G$ to be necessarily of class $C^\infty$, but rather we specify in each statement the required regularity on $\G$.
As far as the authors are concerned, our work is the first one to provide a fully-fledged well-posedness and regularity theory for problems \eqref{our-problems-of-interest}, valid for any integrability parameter $p \in (1,\infty)$ and manifolds of minimal regularity and arbitrary intrinsic dimension $d \geq 2$.
Additionally, we consider our exposition to be accessible to people without strong foundations on Riemannian geometry, hence serving as a bridge to applied fields.

\subsubsection*{Preliminary notations}
We now introduce function spaces on the manifold $\G$; these spaces are succintly discussed in \autoref{sec:difgeo-Sobspace} below (see \cite[Section 3]{BenavidesNochettoShakipov2025-a} for further details). We use the notation  $\LL^q(\G)$ and $\WW^{m,q}(\G)$ for the scalar Lebesgue and Sobolev spaces with integrability $q \in [1,\infty)$ and differentiability $m \geq 0$; we denote the corresponding norms respectively by $\|\cdot\|_{0,q;\G}$ and $\|\cdot\|_{m,q;\G}$. We also write $\WW^{0,q}(\G) = \LL^q(\G)$ and $\HH^m(\G) := \WW^{m,2}(\G)$ with corresponding norm $\|\cdot\|_{m,\G} := \|\cdot\|_{m,2;\G}$.
For any scalar function space $\VV$ defined on $\G$, we denote by $\bV$ and $\bbV$ its $(d+1)$-vectorial and $(d+1) \times (d+1)$ tensorial counterparts, and we use the same notation for their norms.
Moreover, we denote by $\WW^{m,q}_\#(\G)$ the closed subset of scalar-valued functions in $\WW^{m,q}(\G)$ that satisfy $\int_\G v =0$, whereas $\bW^{m,q}_t(\G)$ denotes the space of vector-valued functions $\bv: \G \rightarrow \RR^{d+1}$ that belong to $\bW^{m,q}(\G)$ and are a.e.~tangential to $\G$ (i.e.~$\bv \cdot \bnu = 0$).

Finally, for any normed space $V$ its dual space is denoted by $V'$, whose induced norm is given by $\|f\|_{V'} := \sup_{0 \neq v \in V} \frac{f(v)}{\|v\|_V}$.
We denote the duality pairing between $V'$ and $V$ by $\langle \cdot, \cdot \rangle_{V' \times V}$;
when there is no ambiguity, we will simply write $\langle \cdot,\cdot \rangle$.

Throughout this article, for any integrability parameter $p \in (1,\infty)$, we denote by $p^* := \frac{p}{p-1}$ its conjugate exponent.
Notice that $p^{**} = p$.
Also, $p \in (1,2)$ if and only if $p^* \in (2,\infty)$.

\subsection{Background}
The differential operators involved in \eqref{our-problems-of-interest} satisfy integration-by-parts formulas that highly resemble their flat-domain counterparts \cite[Ap.~B]{BouckNochettoYushutin2024}, \cite[\textsection 2]{DziukElliott2013} (see also \cite[Lemma 1]{BonitoDemlowNochetto2020}).
Said identities naturally induce a set of $\LL^2$-based weak formulations that we now review.

\subsubsection*{Bochner--Laplace problem}

The associated weak formulation to problem \eqref{eq:manifold-BochnerPoisson} reads:
given $\bf \in (\bH^1_t(\G))'$, find $\bu \in \bH^1_t(\G)$ such that
\begin{equation}\label{eq:weak-manifold-BochnerPoisson}
\int_\G \nablaG \bu : \nablaG \bv = \langle \bf,\bv \rangle, \qquad \forall \bv \in \bH^1_t(\G).
\end{equation}
The well-posedness of \eqref{eq:weak-manifold-BochnerPoisson} follows from a Poincaré-type inequality for the covariant gradient $\nablaG$, which has only been established for $\G$ of class $C^\infty$ and of intrinsic dimension $d=2$ \cite{HansboLarsonLarsson2020} (although, with a small gap in its proof).
We fix this gap and extend the result to $\G$ of class $C^2$ and arbitrary dimension $d \geq 2$ in \autoref{sec:Lp-regularity-BochnerLaplacian}.

We find it important to point out that second-order linear differential operators other than the Bochner Laplacian $\DeltaB := \bP \divG \nablaG$ are considered throughout the vast differential geometry literature.
For instance, in the context of differential forms, the \emph{Hodge Laplacian} $\DeltaH = -(\dd^* \dd + \dd \dd^*)$, defined in terms of the \emph{exterior derivative} $\dd$ and \emph{coddiferential} $\dd^*$, maps $1$-forms into $1$-forms;
we refer to \cite[p.~601]{Taylor2011} for its definition on general compact Riemannian manifolds.
By means of the musical isomorphism, we also understand $\DeltaH$ as acting on tangent vector fields $\bv: \G \rightarrow \RR^{d+1}$.
Moreover, the so-called \emph{Surface Diffusion} operator $\Delta_S := 
\bP \divG 2D_\G$ (where $D_\G := \frac{1}{2}(\nablaG + \nablaG^T)$ is the symmetrized covariant gradient) is considered to be the most physically relevant diffusion operator in the context of fluid flows \cite{EbinMarsden:1970,KobaLiuGiga2017,JankuhnOlshanskiiReusken2018}.
We stress that none of these operators coincide with the componentwise application of the scalar Laplace Beltrami operator $\DeltaG$ studied in \cite{BenavidesNochettoShakipov2025-a}, whence the theory of \eqref{eq:weak-manifold-BochnerPoisson} is not a simple consequence of \cite{BenavidesNochettoShakipov2025-a}.

For any intrinsic dimension $d \geq 2$, the Bochner--Laplace $\DeltaB$, Hodge $\DeltaH$ and Surface Diffusion $\DeltaS$ operators are related via the identities \cite[eq.~(2.10)]{JankuhnOlshanskiiReusken2018}, \cite[p.~583, eq.~(3.56)]{TaylorIII_2023}:
for each $\bv \in \bC^2_t(\G)$
\begin{equation}\label{all-Laplace-relationship}
\Delta_S \bv= \DeltaB \bv + \nablaG \divG \bv + ( \tr(\bB) \bB - \bB^2) \bv, \qquad
\DeltaH \bv = \DeltaB \bv - ( \tr(\bB) \bB - \bB^2) \bv.
\end{equation}
Here we have used that the Ricci curvature tensor reads $\tr(\bB) \bB - \bB^2$ in terms of the Weingarten map $\bB:=\nabla_\G\bnu$, which in turn encodes the principal curvatures of $\G$.
This follows from elementary manipulations of the Riemann Curvature tensor (see \autoref{sec:RCT} below), or alternatively, by comparing \cite[p.~347, eq.~(4.26)]{TaylorII_2023} with \cite[eq.~(2.10)]{JankuhnOlshanskiiReusken2018}.

From \eqref{all-Laplace-relationship} it is apparent that the Hodge Laplacian $\DeltaH$ and Bochner Laplacian $\DeltaB$ operators only differ on a zeroth-order term $( \tr(\bB) \bB - \bB^2) \bv$.
Moreover, such a relation also holds for the \emph{surface diffusion} operator $\Delta_S$ 
under the additional \emph{manifold incompressibility} assumption $\divG \bv= 0$.
For manifolds embedded in $\RR^3$, the Hodge Laplacian can be rewritten as $\DeltaH = \bcurl_\G \curl_\G + \nablaG \divG$.
We refer to \cite[\textsection 3.2]{JankuhnOlshanskiiReusken2018} and \cite[\textsection 4.1]{Reusken2020} and their references therein for a thorough discussion of these operators, including their historical origin and their relation with each other.

The previously mentioned Hodge theory \cite[\textsection 7.4]{Morrey2008} also provides $\WW^{2,p}$-regularity ($1 < p < \infty$) for solutions of the Hodge Laplacian $\DeltaH$ operators (for $1$-forms) in manifolds of class $C^{2,1}$.
By virtue of the musical isomorphism, we expect the result on $\DeltaH$ to extend to tangent vector fields, although we are unable to pinpoint a specific reference for the proof.
Consequently, said regularity would also be enjoyed by the Bochner Laplacian $\DeltaB$, as it only differs from the Hodge Laplace $\DeltaH$ in lower-order terms (cf.~\eqref{all-Laplace-relationship}).

\subsubsection*{Tangent Stokes problem}
The corresponding weak formulation for the tangent Stokes problem reads: 
given $\bf \in (\bH^1_t(\G))'$ and $g \in \LL_\#^2(\G)$, find $(\bu,p) \in \bH^1_t(\G) \times \LL_\#^2(\G)$ such that
\begin{equation}\label{eq:weak-manifold-stokes}
\begin{aligned}
\int_\G \nablaG \bu : \nablaG \bv
- \int_\G p \divG \bv & = \langle \bf , \bv \rangle
, \quad & \forall\, \bv \in \bH^1_t(\G),\\
- \int_\G q \divG \bu & = -\int_\G g \, q, \quad & \forall\, q \in \LL_\#^2(\G).
\end{aligned}
\end{equation}
If we consider the Hodge Laplacian $\DeltaH$ or the Surface Diffusion $\Delta_S$ in lieu of the Bochner Laplacian $\DeltaB$, different Stokes systems arise;
see for instance, \cite[\textsection\textsection 4-5]{JankuhnOlshanskiiReusken2018} for $\Delta_S$, and
\cite[\textsection 17.A]{Taylor2011} for $\DeltaH$.

The weak formulation for the surface diffusion operator $\Delta_S$ \cite[eq.~(4.5)]{JankuhnOlshanskiiReusken2018} replaces the diffusion term $\int_\G \nablaG \bu : \nablaG \bv$ in \eqref{eq:weak-manifold-stokes} by $\int_\G D_\G(\bu):D_\G(\bv)$.
Consequently, the flow velocity $\bu$ needs to be sought in the orthogonal complement of the non-trivial kernel of the operator $D_\G$ so as to ensure uniqueness.
The well-posedness of said formulation follows from a straightforward application of the Babu\v{s}ka--Brezzi theory (see, e.g.~\cite[Th.~2.1]{Gatica2014}), which hinges on a \emph{Korn-type inequality} \cite[Lemma~4.1]{JankuhnOlshanskiiReusken2018} and an \emph{inf-sup condition} on the covariant divergence $\divG$ \cite[Lemma~4.2]{JankuhnOlshanskiiReusken2018}.
As expected, the well-posedness of \eqref{eq:weak-manifold-stokes} will depend on the same \emph{Poincaré inequality} (instead of a Korn inequality) for the covariant gradient $\nablaG$ in the space $\bH^1_t(\G)$ used to establish the well-posedness of \eqref{eq:weak-manifold-BochnerPoisson}.

In flat bounded domains, estimates in $\WW^{m,p}$-norms for Stokes systems, provided such regular-enough solutions exist, are a consequence of the results by Agmon, Douglis and Nirenberg \cite{ADN1955,ADN1964}, whose proofs are built upon techniques of integral representation and potential theory.
The existence of such solutions was proved in \cite{Cattabriga1961} for $d=3$, while the case $d=2$ is much simpler as it reduces to a biharmonic problem \cite[Chapter 1, Proposition 2.3]{Temam2001}.
We refer to \cite[Chapter 1, \textsection 2.5]{Temam2001} for a thorough discussion on the regularity of the Stokes system and its early historical development.

The special case of $\LL^2$-based (i.e.~$p=2$) estimates allows alternative proofs that circumvent the use of potential theory;
among them we find \cite{Ghidaglia1984,AmroucheGirault1994,BeiraodaVeiga1997,GiaquintaModica1982}.
Ghidaglia \cite{Ghidaglia1984} (see also \cite[\textsection 3]{ConstantinFoias1988}) argues based on difference quotients and flattening of the boundary, while Amrouche and Girault \cite{AmroucheGirault1994} develop a Helmholtz decomposition approach of the space $\bW^{m,p}(\Omega) \cap \bW^{1,p}_0(\Omega)$.
Finally, we refer to Giaquinta and Modica \cite{GiaquintaModica1982} for a clear and self-contained discussion of the $\LL^2$-based Sobolev and H\"older regularity of nonlinear Stokes systems with general coefficients.

We highlight the particularly elegant argument by Beir\~{a}o da Veiga \cite{BeiraodaVeiga1997} (later extended by Berselli \cite{Berselli2014}) that, via the ``method of continuity'' (homotopy argument) \cite[\textsection 9, Theorem 5.2]{GilbargTrudinger2001}, transfers the $\bW^{2,p}(\Omega)$-well-posedness of the Poisson's equation to $\bW^{2,p}(\Omega) \times \WW^{1,p}(\Omega)$-well-posedness of the Stokes system.

The development of a quantitative Sobolev regularity theory for the Stokes equations (in flat domains) in terms of data and boundary regularity is not only of intrinsic interest, but also is a powerful (and in many occasions indispensable) ingredient when showing well-posedness of many nonlinear PDEs arising from fluid mechanics via the Faedo--Galerkin method, e.g.~\cite{ConstantinFoias1988,Temam2001,LariosLunasinTiti2013,BerchioFalocchiPatriarca2024,AbelsDolzmann2014,GaldiKyed2016}.
More precisely, upon approximating the solution by a finite linear combination of $N$ eigenfunctions of the Stokes operator, the standard idea is to obtain sufficiently strong a-priori estimates in time and space that are uniform in $N$ by handling the nonlinearities of the problem via Sobolev interpolation inequalities and the higher regularity of the Stokes eigenfunctions.
A weak solution of the continuous problem is recovered in the limit $N \to \infty$ by compactness arguments.

Regarding the Dirichlet problem for the tangent Stokes system \eqref{eq:manifold-stokes} (or any of its variants) on compact $C^\infty$ Riemannian manifolds $M$ with smooth boundary, we mention Taylor's book \cite[Ch.~17, Ap.~A]{Taylor2011} for the $\LL^2$-based regularity theory for the Hodge Laplacian $\DeltaH$.
For the case of the \emph{deformation Laplacian} $2 \operatorname{Def}^*\operatorname{Def}$ (the analogue of the surface diffusion operator acting on $1$-forms) and smooth $M$ with $C^1$ or  Lipschitz boundary, existence of solutions in Sobolev--Besov spaces has been addressed by, among others, Dindos, Mitrea and Taylor \cite{MitreaTaylor:2001,DindosMitrea:2004}.
The arguments in \cite{Taylor2011,MitreaTaylor:2001,DindosMitrea:2004} are built upon Schwartz kernel representation of differential operators and the representation of the velocity solution in the form of layer potentials.

More recently, Olshanskii, Reusken and Zhiliakov's \cite[Lemma 2.1]{ORZ2021} provide $\bH^2\times \HH^1$-regularity for the case of the surface diffusion operator $\Delta_S$ and 2D compact $C^{2,1}$-regular surfaces without boundary embedded in $\RR^3$.
Its proof hinges on the ($d=2$)-only characterization $\DeltaH = \bcurl_\G \curl_\G + \nablaG \divG$ of the Hodge Laplacian and its simple relation with the surface diffusion operator to decouple velocity and pressure, and regularity results in Hodge theory \cite[\textsection 7.4]{Morrey2008}.
There seem to be no readily available analogue for ``rougher'' manifolds of class $C^m$ or $C^{m-1,1}$ ($m \in \NN$) of arbitrary dimension $d \geq 2$ and Dindos, Mitrea and Taylor's approach is not easily adaptable to this less stringent regularity context, nor can the argument in \cite{ORZ2021} be extended as is to dimensions $d \geq 3$.

\subsubsection*{Tangent Navier--Stokes problem}
For intrinsic dimensions $d \leq 4$, the $\LL^2$-based weak formulation for the tangent Navier--Stokes reads:
given $\bf \in (\bH^1_t(\G))'$ and $g \in \LL_\#^2(\G)$, find $(\bu,p) \in \bH^1_t(\G) \times \LL_\#^2(\G)$ such that
\begin{equation}\label{eq:weak-L2based-manifold-NS}
\begin{aligned}
\int_\G \nablaG \bu : \nablaG \bv
+ \int_\G (\nablaG \bu) \bu \cdot \bv
- \int_\G p \divG \bv & = \langle \bf , \bv \rangle
, \quad & \forall\, \bv \in \bH^1_t(\G),\\
- \int_\G q \divG \bu & = -\int_\G g \, q, \quad & \forall\, q \in \LL_\#^2(\G);
\end{aligned}
\end{equation}
the restriction $d \le 4$ ensures that the integral $\int_\G (\nablaG \bu)\bu \cdot \bv$ makes sense as a consequence of Sobolev embeddings.

We are unable to refer to any specific results concerning the existence of solutions of \eqref{eq:weak-L2based-manifold-NS} for data $\bf$ and $g$ and manifold $\G$ of minimal regularity, although we notice that a relatively straightforward adaptation of Temam's strategy, that is, using a Faedo--Galerkin method with eigenfunctions of an appropriate (tangent) Stokes operator (see \autoref{sec:Stokes-operator} below) will suffice, as attested in \autoref{sec:existence-incompressibleNS-d234} below.
On the other hand, we refer to the aforementioned work by Dindos and Mitrea \cite{DindosMitrea:2004} for the existence of solutions in the Sobolev--Besov sense of the tangent Navier--Stokes system on $C^\infty$ manifolds of dimensions $d \leq 4$ and $C^1$ (or Lipschitz) boundary.

\subsection{Statements of the main results}\label{subsec:main-results}
We now collect the main results of this paper, and briefly comment on their nuances and connections.
The following result for the {\it Bochner-Laplace} operator \eqref{eq:manifold-BochnerPoisson} is derived in \autoref{sec:bl} and hinges on a Poincar\'e inequality for the covariant gradient $\nablaG$ proved in \autoref{lem:Poincare}.

\begin{thm}[Bochner--Laplace] \label{thm:bl}
Let $\G$ be of class $C^2$ and let $p \in (1,\infty)$.
Then, for every $\bf \in (\bW^{1,p^*}_t(\G))'$, there exists a unique $\bu \in \bW^{1,p}_t(\G)$ such that
\begin{equation} \label{eq:bl-weak}
\int_\G \nablaG \bu : \nablaG \bv = \langle \bf, \bv \rangle, \qquad \forall \bv \in \bW^{1,p^*}_t(\G).
\end{equation}
Moreover, there exists a positive constant $C$, depending only on $\G$ and $p$, such that
\[
\|\bu\|_{1,p;\G} \leq C \|\bf\|_{(\bW^{1,p^*}_t(\G))'}.
\]
Furthermore, if $\G$ is of class $C^{m+2,1}$ for some nonnegative integer $m$ and $\bf \in \bW^{m,p}_t(\G)$, then the solution $\bu \in \bW^{1,p}_t(\G)$ of \eqref{eq:bl-weak} is in fact in $\bW^{m+2,p}_t(\G)$ and strongly satisfies the PDE: $-\DeltaB \bu = \bf$ a.e.~on $\G$.
In addition, there is a constant $C = C(\G, p) > 0$ such that
\[
\|\bu\|_{m+2,p;\G} \leq C \|\bf\|_{m,p;\G}.
\]
\end{thm}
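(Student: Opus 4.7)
The strategy is to reduce the vector problem to the scalar Laplace--Beltrami problem of Theorem~\ref{thm:lp} via a componentwise identity. Writing $\bu = (u_i)_{i=1}^{d+1}$ in the ambient Cartesian basis and using the tangency constraint $u_i \nu_i = 0$ together with $\DeltaB \bu = \bP \divG \nablaG \bu$, one obtains
\[
-\DeltaG u_i = f_i + \mathcal{R}_i(\bu),
\]
where $\mathcal{R}_i(\bu)$ is a first-order differential operator in $\bu$ whose coefficients are algebraic in $\bnu$, $\bB$ and their tangential derivatives. Since $\bnu$ and $\bB$ cost one and two derivatives of a parametrization of $\G$ respectively, $\mathcal{R}_i(\bu) \in \WW^{k,p}(\G)$ whenever $\bu \in \bW^{k+1,p}_t(\G)$ and $\G \in C^{k+3}$. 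This identity is the workhorse of the whole argument.

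Well-posedness is established in two stages. The case $p=2$ follows from the Lax--Milgram theorem together with the Poincar\'e-type inequality on $\bH^1_t(\G)$ proved in Section~\ref{sec:L2-based-wp-Bochner-Laplacian}. For general $p \in (1,\infty)$, I would invoke the Banach--Ne\v{c}as--Babu\v{s}ka theorem, mirroring the proof of Theorem~\ref{thm:lp}. The inf-sup condition in $\bW^{1,p}_t(\G) \times \bW^{1,p^*}_t(\G)$ is verified by constructing a test field $\bv \in \bW^{1,p^*}_t(\G)$ via the dual characterization of the $\bL^p$-norm of $\nablaG \bu$, combined with the $\bW^{2,p^*}$-bijectivity of $-\DeltaB$; the tangency of $\bv$ is automatic because it arises as the Bochner--Laplace solution of a tangentially sourced problem. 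The required $\bW^{2,p^*}$-bijectivity is itself obtained from the componentwise identity above by a short bootstrap starting from the $\bH^1$-solution at $p=2$ and iterating the scalar $\WW^{2,p}$-regularity of Theorem~\ref{thm:lp}; injectivity follows from the $\bH^1$-energy estimate.

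For the higher regularity claim I would argue by induction on $m$ using the same reduction. Assuming the assertion at level $m-1$, if $\bf \in \bW^{m,p}_t(\G)$ and $\G \in C^{m+3}$, the componentwise PDE places its right-hand side in $\WW^{m,p}(\G)$, so the higher-regularity part of Theorem~\ref{thm:lp} upgrades each $u_i$ to $\WW^{m+2,p}(\G)$ with the asserted control. The strong form $-\DeltaB \bu = \bf$ then holds a.e.\ by construction.

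The main obstacle will be the careful bookkeeping in the componentwise reduction: verifying that $\mathcal{R}_i(\bu)$ remains in the correct Sobolev class at each bootstrap step requires multiplier and Sobolev product estimates compatible with the $C^{m+3}$-regularity of $\G$, and borderline low-$m$ or low-$p$ cases must be handled by H\"older interpolation with Sobolev embedding. Once this bookkeeping is in place, each step reduces cleanly to its scalar counterpart from Theorem~\ref{thm:lp}.
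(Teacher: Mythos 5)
Your overall architecture --- reduce the Bochner--Laplace problem to the scalar Laplace--Beltrami problem through the componentwise identity $\bP\divG\nablaG\bu = \DeltaG\bu + \bB^2\bu + (2\bB:\nabla_M\bu + \divG\bB\cdot\bu)\bnu$, prove well-posedness by Banach--Ne\v{c}as--Babu\v{s}ka, and obtain higher regularity by induction on $m$ --- is exactly the paper's. However, two steps in your plan would fail as written.

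First, the inf-sup condition. You propose to mirror the scalar proof by dualizing the $\bL^p$-norm of $\nablaG\bu$ and realizing the test object via the $\bW^{2,p^*}$-bijectivity of $-\DeltaB$. In the scalar case this works because the object being dualized, $\nablaG u$, is a \emph{tangent vector field}, i.e.\ it lives precisely in the target space $\bL^{p^*}_t(\G)$ of $-\DeltaB:\bW^{2,p^*}_t(\G)\to\bL^{p^*}_t(\G)$, and one then needs the weak commutator identity (Lemma~\ref{lem:exploiting-symmetry-covarianthessian}) to convert $\int_\G\nablaG u\cdot\bP\divG\nablaG\bv$ into the scalar bilinear form tested against $\divG\bv\in\WW^{1,p^*}_\#(\G)$. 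In the vector case $\nablaG\bu$ is a \emph{tensor} field, and no surjectivity statement produces arbitrary tensor test fields of the form $\nablaG\bv$; dualizing only $\|\bu\|_{0,p;\G}$ against $\bL^{p^*}_t(\G)$ controls the zeroth-order norm but not $\|\nablaG\bu\|_{0,p;\G}$. The paper instead reduces the vector inf-sup to the already-established scalar one componentwise, using Proposition~\ref{prop:Bochner-LB-weakrelation} to absorb the coupling terms into a compact ($\|\bu\|_{0,p;\G}$) perturbation; see Proposition~\ref{prop:BochnerLaplacian-existence}. You need this step, or an equivalent substitute.

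Second, and more seriously, the range $1<p<2$. Your regularity argument is ``a short bootstrap starting from the $\bH^1$-solution at $p=2$.'' Bootstrapping upward from $\bH^2$ via Sobolev embeddings only increases integrability and cannot reach $p<2$. Worse, for $p$ close to $1$ a datum $\bf\in\bL^p_t(\G)$ need not define a functional on $\bH^1_t(\G)$, so there is no $\bH^1$ solution to start from; and a field in $\bW^{2,q}_t(\G)$ need not lie in $\bH^1_t(\G)$ when $q<\frac{2d}{d+2}$, which also breaks your ``energy estimate'' injectivity argument in that regime. The paper handles $p<2$ by an ultraweak formulation in $\bL^p_t(\G)$ set up by duality against $\bW^{2,p^*}_t(\G)$ (using the $p^*>2$ result), and upgrading its regularity requires the $\WW^{1,p}$-well-posedness of the scalar Laplace--Beltrami problem for $p<2$ --- which is itself proved via an inf-sup that relies on the $\bW^{2,p^*}$-regularity ($p^*>2$) of the Bochner Laplacian. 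Your plan does not address this interleaving (summarized in Figure~\ref{fig:roadmap}), and without it the case $1<p<2$ of both the well-posedness and the regularity assertions remains unproved.
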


We point out that the required regularity of $\G$ in \autoref{thm:bl} is one unit higher than in \cite[Theorems 1.1 and 1.2]{BenavidesNochettoShakipov2025-a} for the scalar Laplace-Beltrami operator. This is due to the fact that the vector-valued solution $\bu\in
\bW^{1,p}_t(\G)$ is tangent a.e. to $\G$, whence $\bu$ is a linear combination of tangent vectors to $\G$ that already consume one derivative of the parametrization of $\G$. This observation applies to all statements below that rely on \autoref{thm:bl}.

We prove the $\LL^p$-based ($1<p<\infty$) well-posedness and higher regularity of the {\it tangent Stokes} problem \eqref{eq:manifold-stokes} in \autoref{sec:stokes} by generalizing a trick of Olshanskii, Reusken and Zhiliakov for the proof of \cite[Lemma 2.1]{ORZ2021}.

\begin{thm}[tangent Stokes] \label{thm:stokes}
Let $\G$ be of class $C^2$ and let $p \in (1,\infty)$.
Then, for every $(\bf, g) \in (\bW^{1,p^*}_t(\G))' \times \LL^p_\#(\G)$, there exists a unique pair $(\bu, \uppi) \in \bW^{1,p}_t(\G) \times \LL^p_\#(\G)$ such that
\begin{equation*}
\begin{aligned}
\int_\G \nablaG \bu : \nablaG \bv
- \int_\G \uppi \divG \bv & = \langle \bf , \bv \rangle
, \quad & \forall\, \bv \in \bW^{1,p^*}_t(\G),\\
- \int_\G q \divG \bu & = -\int_\G g \, q, \quad & \forall\, q \in \LL_\#^{p^*}(\G).
\end{aligned}
\end{equation*}
Moreover, there exists a positive constant $C$, depending only on $\G$ and $p$, such that
\[
\|\bu\|_{1,p;\G} + \|\uppi\|_{0,p;\G} \leq C \left(\|\bf\|_{(\bW^{1,p^*}_t(\G))'} + \|g\|_{0,p;\G} \right).
\]
Furthermore, if $\G$ is of class $C^{m+2,1}$ for some nonnegative integer $m$ and $(\bf, g) \in \bW^{m,p}_t(\G) \times \WW^{m+1,p}_\#(\G)$, then the solution $(\bu, \uppi) \in \bW^{1,p}_t(\G) \times \LL^p_\#(\G)$ is in fact in $\bW^{m+2,p}_t(\G) \times \WW^{m+1, p}_\#(\G)$ and satisfies the PDEs in strong form: $ -\DeltaB \bu + \nablaG \uppi = \bf, \quad \divG \bu = g$ a.e. on $\G$.
In addition, there is a constant $C = C(\G, p) > 0$ such that
\[
\|\bu\|_{m+2,p;\G} + \|\uppi\|_{m+1,p;\G} \leq C (\|\bf\|_{m,p;\G} + \|g\|_{m+1,p;\G}).
\]
\end{thm}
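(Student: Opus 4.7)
The plan follows the roadmap outlined immediately before the statement: build up from $\bH^1 \times \LL^2$ to $\bW^{1,p} \times \LL^p$ well-posedness via the generalized Babuška–Brezzi theory, and then obtain higher regularity by decoupling the pressure and velocity equations, exploiting the closedness of $\G$.

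For the Hilbert case I would verify the classical Babuška–Brezzi hypotheses for $a(\bu,\bv) = \int_\G \nablaG \bu : \nablaG \bv$ and $b(\bv,q) = -\int_\G q\, \divG \bv$ on $\bH^1_t(\G) \times \LL^2_\#(\G)$. Coercivity of $a$ is the Poincaré-type inequality for $\nablaG$ already at hand from the analysis leading to \autoref{thm:bl}. The inf-sup for $b$ comes from the following construction: given $q \in \LL^2_\#(\G)$, solve $-\DeltaG w = q$ via \autoref{thm:lp}, set $\bv := \nablaG w$, and observe that $\bv \in \bH^1_t(\G)$, $\divG \bv = -q$, and $\|\bv\|_{1,2;\G} \leq C\|q\|_{0,2;\G}$. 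To upgrade to the $\LL^p$ case I would then apply the generalized Babuška–Brezzi theory in reflexive Banach spaces (cf.~appendix): the same $\bv := \nablaG w$ construction, now invoking the $\WW^{2,p}$-regularity in \autoref{thm:lp}, delivers both required inf-sup conditions for $b$ (the direct one in $\bW^{1,p}_t \times \LL^{p^*}_\#$ and its transposed counterpart), while the inf-sup of $a$ on the divergence-free subspaces reduces to a Bochner–Laplace problem on those subspaces and is resolved by \autoref{thm:bl} together with the $\LL^2$-uniqueness from the previous step.

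For the higher-regularity step I would formally apply $\divG$ to the momentum equation to produce
\[
-\DeltaG \uppi + \divG \DeltaB \bu = -\divG \bf,
\]
and then rewrite $\divG \DeltaB \bu$ using a commutator identity of the form
\[
\divG \DeltaB \bu = \DeltaG (\divG \bu) + \mathcal{L}(\bu),
\]
where $\mathcal{L}$ is a first-order differential operator whose coefficients depend only on the curvatures of $\G$ (in the spirit of \eqref{all-Laplace-relationship}). Substituting $\divG \bu = g$ gives the decoupled Laplace–Beltrami problem
\[
-\DeltaG \uppi = -\divG \bf + \DeltaG g - \mathcal{L}(\bu),
\]
whose right-hand side lies in $\WW^{m-1,p}_\#(\G)$ once $\bu \in \bW^{m,p}_t(\G)$; \autoref{thm:lp} then yields $\uppi \in \WW^{m+1,p}_\#(\G)$, and reinserting into $-\DeltaB \bu = \bf - \nablaG \uppi \in \bW^{m,p}_t(\G)$ gives $\bu \in \bW^{m+2,p}_t(\G)$ via \autoref{thm:bl}. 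Iterating this pair of upgrades in $m$ starting from the base $(\bu,\uppi) \in \bW^{1,p}_t(\G) \times \LL^p_\#(\G)$ closes the argument and produces the quantitative estimate.

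The principal obstacle is the commutator identity $\divG \DeltaB \bu = \DeltaG \divG \bu + \mathcal{L}(\bu)$, which must be established for $\bu$ of merely $\bW^{2,p}$-regularity. Its proof requires a delicate computation with tangential Ricci-type curvature terms, in which the hypothesis $\partial \G = \emptyset$ absorbs what would otherwise be obstructive boundary contributions during integration by parts. Once this identity is in place, the remainder is a routine bootstrap in $m$ driven by \autoref{thm:lp} and \autoref{thm:bl}.
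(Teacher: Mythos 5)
Your overall architecture coincides with the paper's: Babu\v{s}ka--Brezzi for the well-posedness, then a pressure--velocity decoupling via a commutator identity for the regularity bootstrap. The higher-regularity half of your plan is essentially the paper's argument: the identity you need is \autoref{lem:exploiting-symmetry-covarianthessian}, namely $\int_\G \bP \divG \nablaG \bv \cdot \nablaG \phi = \int_\G \nablaG \divG \bv \cdot \nablaG \phi + \int_\G (\tr(\bB)\bB - \bB^2)\bv \cdot \nablaG \phi$, which follows from two integrations by parts, the symmetry of the covariant Hessian and the known identity for $\bP\divG\nablaG^T$ --- so the ``delicate computation'' you anticipate is short, and the low-regularity issue you flag is sidestepped by testing the momentum equation with $\bv = \nablaG q$, $q \in \WW^{2,p^*}_\#(\G)$, and working with the ultra-weak Laplace--Beltrami formulation for $\uppi$ rather than literally applying $\divG$ to the equation.

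The genuine gap is in the $\bW^{1,p}_t(\G)\times\LL^p_\#(\G)$ well-posedness. The generalized Babu\v{s}ka--Brezzi theorem requires the inf-sup condition for $\int_\G \nablaG\bu:\nablaG\bv$ with the supremum taken over the \emph{divergence-free} subspace $\bW^{1,p^*}_{t,\sigma}(\G)$, together with injectivity of the adjoint restricted to that subspace. Restricting the supremum to a subspace weakens the inequality, so \autoref{thm:bl} --- which gives the inf-sup over all of $\bW^{1,p^*}_t(\G)$ --- does not ``resolve'' this, and your one-line reduction is not valid as stated. The paper's fix is precisely the commutator identity you reserve for the regularity step: decomposing a test function as $\bv = \bv_\sigma + \nablaG\psi$ via the Helmholtz--Weyl/Leray decomposition, the identity combined with $\divG\bu = 0$ gives $\int_\G \nablaG\bu:\nablaG\nablaG\psi = -\int_\G (\tr(\bB)\bB-\bB^2)\bu\cdot\nablaG\psi$, so the gradient part contributes only a term bounded by $\|\bu\|_{0,p;\G}\|\bv\|_{1,p^*;\G}$, which is then removed by a Peetre--Tartar compactness/contradiction argument resting on the injectivity of the adjoint on the kernel. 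That injectivity is itself nontrivial when $p^*<2$: one must reconstruct a pressure in $\LL^{p^*}_\#(\G)$ from the orthogonality relation and bootstrap the integrability of the pair up to the Hilbert setting before invoking $\LL^2$-uniqueness. Your phrase ``together with the $\LL^2$-uniqueness from the previous step'' gestures at this but supplies no mechanism for either the kernel restriction or the sub-$\LL^2$ bootstrap; these are the missing ideas. The remainder of the proposal (the $\bH^1\times\LL^2$ case, the inf-sup for $\divG$, and the induction in $m$) is sound and matches the paper.
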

The main argument goes as follows. We begin by establishing the $\bH^1 \times \LL^2$-well-posedness as a consequence of the classical Babu\v{s}ka--Brezzi theory in Hilbert spaces---which hinges on \autoref{lem:Poincare} (Poincar\'e inequality).
In turn, by means of non-trivial commutator identities on tangential calculus and exploiting the lack of boundary of $\G$ (cf.~\autoref{app:calculus}), we obtain a collection of inf-sup conditions required to establish the $\bW^{1,p} \times \LL^p$-well-posedness as a direct application of the generalized Babu\v{s}ka--Brezzi theory in reflexive Banach spaces (see \autoref{app:var-prob}).
More precisely, this is achieved by decoupling the velocity and pressure unknowns into a Laplace--Beltrami problem for the pressure (with a right-hand side depending mildly on the velocity) and a Bochner--Laplace problem for the velocity, so as to be able to use the previously developed $\LL^p$-based ($1<p<\infty$) well-posedness and Sobolev regularity for the Laplace--Beltrami (cf.~\cite{BenavidesNochettoShakipov2025-a}) and Bochner--Laplace operators (cf.~\autoref{thm:bl}), and the $\bH^1 \times \LL^2$-well-posedness of the tangent Stokes problem.
Finally, the same decoupling procedure, combined with the aforementioned properties of the \eqref{eq:Laplace--Beltrami-Poisson} and \eqref{eq:manifold-BochnerPoisson} problems and the $\bW^{1,p} \times \LL^p$-well-posedness of the tangent Stokes problem yields the higher $\LL^p$-based Sobolev regularity results.

Following ideas from \cite[\textsection\textsection 6]{DindosMitrea:2004} and \cite[Chapter 2, \textsection 1]{Temam2001}, we are able to show existence of solutions to the {\it tangent incompressible Navier--Stokes} equations \eqref{eq:manifold-Navier--Stokes} for dimensions $d \in \{2,3,4\}$ and integrability parameter $p \in [2,\infty)$.
To that end, we first focus our attention to a ``linearized'' version of \eqref{eq:manifold-Navier--Stokes}, the so-called {\it tangent Oseen} equations.
By means of the Fredholm alternative in reflexive Banach spaces \cite[Theorem 6.6]{Brezis2011}, we establish the well-posedness of the Oseen equations.
This is the subject of \autoref{sec:Oseen}.

On the other hand, the existence of solutions to \eqref{eq:manifold-Navier--Stokes} for $p=2$ follows from a standard Galerkin method in terms of eigenfunctions of the Stokes operators (cf.~\autoref{sec:Stokes-operator}).
For $p \in (2,\infty)$, we increase the integrability of solutions as a consequence of the well-posedness of the Oseen equations.
This is the content of \autoref{sec:existence-incompressibleNS-d234}.

\begin{thm}[existence of solutions of the incompressible tangent Navier--Stokes equations]\label{thm:ns1}
Let $d \in \{2,3,4\}$ and $p \in [2,\infty)$.
Assume $\G$ is of class $C^2$.
Then, for each $\bf \in (\bW^{1,p^*}_t(\G))'$ there exists a solution $(\bu,\uppi) \in \bW^{1,p}_t(\G) \times \LL^p_\#(\G)$ of
\begin{equation*}
\begin{aligned}
\int_\G \nablaG \bu : \nablaG \bv
+ \int_\G (\nablaG \bu)\bu \cdot \bv - \int_\G \uppi \divG \bv & = \langle \bf , \bv \rangle, \quad & \forall\, \bv \in \bW^{1,p^*}_t(\G),\\
- \int_\G q \divG \bu & = 0, \quad & \forall\, q \in \LL^{p^*}_\#(\G).
\end{aligned}
\end{equation*}
\end{thm}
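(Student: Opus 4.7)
\textbf{Plan for the proof of Theorem \ref{thm:ns1}.}
The plan is to first establish existence for $p=2$ via a Faedo--Galerkin approximation using the eigenfunctions of the tangent Stokes operator from \autoref{sec:Stokes-operator}, and then to bootstrap up to arbitrary $p \in [2,\infty)$ by iteratively invoking the well-posedness of the tangent Oseen problem from \autoref{sec:Oseen}. For the base case $p=2$, let $\{\be_i\}_{i \geq 1}$ be an $\bH^1_t$-orthogonal basis of Stokes eigenfunctions (each of them divergence-free and tangent to $\G$), and seek $\bu_N = \sum_{i=1}^N c_i^N \be_i$ satisfying
\begin{equation*}
\int_\G \nablaG \bu_N : \nablaG \be_i + \int_\G (\nablaG \bu_N)\bu_N \cdot \be_i = \langle \bf, \be_i \rangle, \qquad i=1,\dots,N.
\end{equation*}
Existence of $\bu_N$ follows from Brouwer's fixed-point theorem applied to the associated continuous map on a sufficiently large ball in $\mathbb{R}^N$; the inward-pointing property and a uniform $\bH^1_t$-bound come from testing with $\bu_N$ itself and exploiting the skew-symmetry identity $\int_\G (\nablaG \bu_N)\bu_N \cdot \bu_N = -\frac{1}{2}\int_\G (\divG \bu_N)|\bu_N|^2 = 0$, which holds because each $\be_i$, and hence $\bu_N$, is tangential and divergence-free.

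By Banach--Alaoglu a subsequence of $\{\bu_N\}$ converges weakly in $\bH^1_t(\G)$ to some $\bu$, and by Rellich--Kondrachov on closed manifolds the convergence is strong in $\bL^4_t(\G)$, which is precisely the critical embedding available for $d \leq 4$. This strong $\bL^4$-convergence suffices to pass to the limit in the trilinear nonlinearity tested against any element of $\bigcup_N \vspan\{\be_1,\dots,\be_N\}$, and density of this union in the space of divergence-free $\bH^1$-tangent fields (a consequence of the spectral theory in \autoref{sec:Stokes-operator}) extends the identity to all divergence-free tangential test fields. The pressure $\uppi \in \LL^2_\#(\G)$ is then recovered by applying the inf-sup condition on $\divG$ underlying \autoref{thm:stokes} to the linear functional $\bv \mapsto \int_\G \nablaG \bu : \nablaG \bv + \int_\G (\nablaG \bu)\bu \cdot \bv - \langle \bf, \bv\rangle$, which vanishes on divergence-free tangential fields.

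For $p>2$, I would bootstrap starting from the $\bH^1_t \times \LL^2_\#$-solution above. Viewing the equation as a tangent Oseen problem with transport field $\bu \in \bH^1_t(\G)$ and source $\bf$, the Sobolev embeddings available for $d \in \{2,3,4\}$ yield $\bu \in \bL^q_t(\G)$ for a range of $q>2$, so that $(\nablaG \bu)\bu$ lies in the dual space $(\bW^{1,r^*}_t(\G))'$ for some $r$ strictly above $2$. Invoking the Oseen well-posedness from \autoref{sec:Oseen}, whose proof itself rests on the $\bW^{1,p} \times \LL^p$-theory for tangent Stokes from \autoref{thm:stokes}, one upgrades $\bu$ to $\bW^{1,r}_t(\G)$. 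Iterating the procedure, each pass strictly raises the integrability exponent, and finitely many iterations suffice to reach $\bW^{1,p}_t(\G) \times \LL^p_\#(\G)$ for any prescribed $p<\infty$.

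The main obstacle is the control of this bootstrap iteration in the borderline case $d=4$, where $\bH^1_t \hookrightarrow \bL^4_t$ is critical and the initial gain in integrability produced by one Oseen step is modest; one must verify that the recursion on Sobolev exponents is strictly increasing and unbounded, so that any target $p$ is attained in finitely many steps. A secondary technical point is that the Oseen well-posedness must accept transport fields of precisely the regularity available at each intermediate stage, which requires matching the admissibility hypotheses in \autoref{sec:Oseen} with $\bu \in \bW^{1,q}_t(\G)$ for every $q$ appearing in the bootstrap sequence.
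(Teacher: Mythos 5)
Your Galerkin construction for $p=2$ is essentially the paper's argument (Brouwer/\cite-style surjectivity lemma for the finite-dimensional systems, skew-symmetry for the uniform $\bH^1$-bound, weak compactness, limit passage against smooth eigenfunction test fields, pressure recovery via the inf-sup condition). One misstatement there: for $d=4$ the embedding $\bH^1_t(\G)\hookrightarrow \bL^4_t(\G)$ is critical and \emph{not} compact, so Rellich--Kondrachov does not give strong $\bL^4$-convergence of $\bu_N$. This is repairable and does not affect the scheme: after integrating the trilinear term by parts onto the (smooth, divergence-free) test function $\bw_N$, strong convergence in $\bL^2$ together with $\nablaG\bw_N\in\bbL^\infty(\G)$ already suffices, which is exactly what the paper does.

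The genuine gap is in your passage from $p=2$ to $p>2$. You describe the upgrade as a bootstrap driven by the integrability of the nonlinearity: put $(\nablaG\bu)\bu$ into the source, check it lies in $(\bW^{1,r^*}_t(\G))'$ for some $r>2$, and iterate. For $d=4$ this recursion never leaves $r=2$: with only $\bu\in\bH^1_t(\G)\hookrightarrow\bL^4_t(\G)$ and $\nablaG\bu\in\bbL^2(\G)$ one gets $(\nablaG\bu)\bu\in\bL^{4/3}_t(\G)$, and the duality pairing $\bv\mapsto\int_\G(\nablaG\bu)\bu\cdot\bv$ (equivalently $-\int_\G(\nablaG\bv)\bu\cdot\bu$, with $\bu\otimes\bu\in\bbL^2$ only) is bounded on $\bW^{1,r^*}_t(\G)$ only for $r\le 2$. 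So the very first iteration produces no gain, and the obstacle you flag at the end is not a technicality to be "verified" but an actual failure of the scheme at $d=4$. The fix is to use \autoref{thm:Oseen-wellposedness} at full strength and not iterate at all: its only hypothesis on the transport field is $\overline\bu\in\bL^d_{t,\sigma}(\G)$, which the Galerkin velocity satisfies for every $d\in\{2,3,4\}$ since $\bH^1(\G)\hookrightarrow\bL^d(\G)$; the regularity of the Oseen solution is then governed solely by the datum, so for $\bf\in(\bW^{1,p^*}_t(\G))'$ one obtains in a single application a unique pair $(\bw,\uppi)\in\bW^{1,p}_t(\G)\times\LL^p_\#(\G)$ solving the Oseen system with $\overline\bu=\bu$. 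Since $p\ge 2$ gives $\bw\in\bW^{1,p}_{t,\sigma}(\G)\subseteq\bH^1_{t,\sigma}(\G)$ and the reduced ($p=2$) Oseen problem has a unique solution, $\bw=\bu$, which yields $\bu\in\bW^{1,p}_t(\G)$ and the pressure simultaneously. In short: the convective term must stay in the operator (where only $\overline\bu\in\bL^d$ matters), not in the source (where its integrability is too weak at $d=4$).
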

For arbitrary dimensions $d \geq 2$, the well-posedness of \eqref{eq:manifold-Navier--Stokes} is obtained in \autoref{sec:ns-smalldata} by using the Banach fixed-point theorem under a restricted range of $p$ and a smallness assumption on data.
\begin{thm}[tangent Navier--Stokes with small data] \label{thm:ns2}
Let $\G$ be of class $C^2$ and 
\[
\begin{cases}
p \in [\frac43, \infty), &\qquad \text{if $d = 2$}, \\
p \in [\frac{d}{2}, \infty), &\qquad \text{if $d \geq 3$}.
\end{cases}
\]
Then, for every $(\bf, g) \in (\bW^{1,p^*}_t(\G))' \times \LL^{p}_\#(\G)$ such that $\|\bf\|_{(\bW^{1,p^*}_t(\G))'} + \|g\|_{0,p;\G}$ is sufficiently small, there exists a unique pair $(\bu, \uppi) \in \bW^{1,p}_t(\G) \times \LL^p_\#(\G)$ such that
\begin{equation}\label{eq:s-weak}
\begin{aligned}
\int_\G \nablaG \bu : \nablaG \bv
+ \int_\G (\nablaG \bu)\bu \cdot \bv - \int_\G \uppi \divG \bv & = \langle \bf , \bv \rangle
, \quad & \forall\, \bv \in \bW^{1,p^*}_t(\G),\\
- \int_\G q \divG \bu & = -\int_\G g \, q, \quad & \forall\, q \in \LL^{p^*}_\#(\G).
\end{aligned}
\end{equation}
Moreover, there exists a constant $C = C(\G, p, \bf, g) > 0$ such that
\[
\|\bu\|_{1,p;\G} + \|\uppi\|_{0,p;\G} \leq C.
\]
\end{thm}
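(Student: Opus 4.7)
The plan is to apply the Banach fixed-point theorem to a ``Stokes-lift'' map on $\bW^{1,p}_t(\G)$. Given $\bw \in \bW^{1,p}_t(\G)$, define $T\bw := \bu$, where $(\bu, \uppi) \in \bW^{1,p}_t(\G) \times \LL^p_\#(\G)$ is the unique solution provided by \autoref{thm:stokes} of the linear tangent Stokes problem with data $(\bf - (\nablaG \bw)\bw, g)$. The core is to show that $T$ is well-defined, maps a suitable closed ball $B_R \subset \bW^{1,p}_t(\G)$ into itself, and is a strict contraction there, whenever $\|\bf\|_{(\bW^{1,p^*}_t(\G))'} + \|g\|_{0,p;\G}$ is sufficiently small.

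\textbf{The main nonlinearity estimate.} The cornerstone of the argument is the bilinear bound
\[
\|(\nablaG \bw_1)\bw_2\|_{(\bW^{1,p^*}_t(\G))'} \leq C_N \|\bw_1\|_{1,p;\G} \|\bw_2\|_{1,p;\G}, \qquad \forall\, \bw_1, \bw_2 \in \bW^{1,p}_t(\G),
\]
with $C_N = C_N(\G, p)$. The approach is to apply H\"older's inequality to $\int_\G (\nablaG \bw_1)\bw_2 \cdot \bv$ with exponents $(p, b, c)$ satisfying $\frac{1}{p} + \frac{1}{b} + \frac{1}{c} = 1$, combined with the Sobolev embeddings $\bW^{1,p}(\G) \hookrightarrow \bL^b(\G)$ and $\bW^{1,p^*}(\G) \hookrightarrow \bL^c(\G)$. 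A case analysis shows that the ranges $p \geq 4/3$ (for $d = 2$) and $p \geq d/2$ (for $d \geq 3$) are exactly the thresholds at which a compatible triple $(p, b, c)$ exists: e.g., for $d = 2$ and $1 < p < 2$ one takes $c = \infty$ via $\bW^{1,p^*}(\G) \hookrightarrow \bL^\infty(\G)$ and then $b = p^*$, the latter embedding $\bW^{1,p}(\G) \hookrightarrow \bL^{p^*}(\G)$ holding precisely when $p \geq 4/3$; the critical $p = d/2$ case in higher dimensions saturates the analogous Sobolev chain. The algebraic splitting
\[
(\nablaG \bw_1)\bw_1 - (\nablaG \bw_2)\bw_2 = (\nablaG(\bw_1 - \bw_2))\bw_1 + (\nablaG \bw_2)(\bw_1 - \bw_2),
\]
together with the same H\"older--Sobolev bound, immediately yields the local Lipschitz estimate $\|(\nablaG \bw_1)\bw_1 - (\nablaG \bw_2)\bw_2\|_{(\bW^{1,p^*}_t(\G))'} \leq C_N(\|\bw_1\|_{1,p;\G} + \|\bw_2\|_{1,p;\G})\|\bw_1 - \bw_2\|_{1,p;\G}$.

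\textbf{Closing the fixed-point.} Combining the Stokes a-priori bound of \autoref{thm:stokes} with the estimates above yields, for any $\bw, \bw_1, \bw_2 \in B_R$,
\[
\|T\bw\|_{1,p;\G} \leq C_S \big( \|\bf\|_{(\bW^{1,p^*}_t(\G))'} + \|g\|_{0,p;\G} + C_N R^2 \big),
\]
\[
\|T\bw_1 - T\bw_2\|_{1,p;\G} \leq 2 C_S C_N R \, \|\bw_1 - \bw_2\|_{1,p;\G}.
\]
Choosing $R$ as the smaller root of the quadratic $C_S C_N R^2 - R + C_S (\|\bf\|_{(\bW^{1,p^*}_t(\G))'} + \|g\|_{0,p;\G}) = 0$---which exists and is small once $4 C_S^2 C_N (\|\bf\|_{(\bW^{1,p^*}_t(\G))'} + \|g\|_{0,p;\G}) \leq 1$---simultaneously forces $T(B_R) \subseteq B_R$ and $2 C_S C_N R < 1$. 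The Banach fixed-point theorem then delivers a unique $\bu \in B_R$ with $T\bu = \bu$, and the associated Stokes pressure $\uppi$ completes the solution pair; the a-priori bound on $\|\bu\|_{1,p;\G} + \|\uppi\|_{0,p;\G}$ follows directly from the Stokes estimate.

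\textbf{Main obstacle.} The delicate step is the nonlinearity estimate: the stated $p$-ranges are dictated purely by Sobolev embeddings on $\G$, and checking that they are the sharp thresholds requires a careful case split according to whether $p$ and $p^*$ lie below, at, or above $d$. Note that uniqueness is only established within $B_R$; possibly existing large-norm solutions are not precluded by this argument. No regularity of $\G$ beyond $C^3$ is needed, as the construction only invokes \autoref{thm:stokes} at $m = 0$.
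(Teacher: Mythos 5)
Your proposal is correct and follows the same overall strategy as the paper: a Banach fixed-point argument on a small ball in $\bW^{1,p}_t(\G)$, driven by the trilinear convection bound $\int_\G |(\nablaG \bw_1)\bw_2 \cdot \bv| \lesssim \|\bw_1\|_{1,p;\G}\|\bw_2\|_{1,p;\G}\|\bv\|_{1,p^*;\G}$ (the paper's estimate \eqref{convection-estimate}, derived by exactly the H\"older--Sobolev case split you describe, with the same thresholds $p \geq \frac43$ for $d=2$ and $p \geq \frac d2$ for $d \geq 3$) and the $\bW^{1,p}_t(\G)\times\LL^p_\#(\G)$ well-posedness of the tangent Stokes problem. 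The one structural difference is in the linearization: you lag the nonlinearity completely, so each iterate solves a pure Stokes problem with right-hand side $\bf - (\nablaG\bw)\bw$, whereas the paper uses the semi-implicit (Oseen-type) linearization $(\nablaG\bu)\overline{\bu}$ and must therefore re-verify the global inf-sup conditions for the perturbed bilinear form (its \eqref{NS-global-inf-sup-1}--\eqref{NS-global-inf-sup-2}) before defining the map; your version avoids that extra step at the cost of a quadratic (rather than linear) dependence on $R$ in the self-map estimate, which you handle correctly by taking the smaller root of $C_SC_NR^2 - R + C_S\delta = 0$. Two cosmetic points: the smallness condition should be the strict inequality $4C_S^2C_N(\|\bf\|_{(\bW^{1,p^*}_t(\G))'} + \|g\|_{0,p;\G}) < 1$ so that $2C_SC_NR < 1$ strictly, and your remark that uniqueness is only obtained within $B_R$ applies equally to the paper's own proof.
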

We bootstrap the aforementioned well-posedness and higher-regularity theory for the tangent Stokes problem (cf.~\autoref{thm:stokes}) to deal with the nonlinearity $(\nablaG\bu) \bu$ of \eqref{eq:manifold-Navier--Stokes}, resulting in the following higher regularity result for the tangent Navier--Stokes equations (cf.~\autoref{thm:Lpbased-regularity-NS}).
\begin{thm}[higher regularity of the tangent Navier--Stokes equations]\label{thm:ns3}
Let
\[
\begin{cases}
p \in (\frac43, \infty), &\qquad \text{if $d = 2$}, \\
p \in (\frac{d}{2}, \infty), &\qquad \text{if $d \geq 3$}.
\end{cases}
\]
Assume $\G$ is of class $C^{m+2,1}$ for some nonnegative integer $m \geq 0$.
Let $(\bu, \uppi) \in \bW^{1,p}_t(\G) \times \LL^p_\#(\G)$ be a solution of \eqref{eq:s-weak} with right-hand side $(\bf, g) \in \bW^{m,p}_t(\G) \times \WW^{m+1,p}_\#(\G)$.
Then, $(\bu, \uppi)$ actually lies in $\bW^{m+2, p}_t(\G) \times \WW^{m+1, p}_\#(\G)$ and satisfies the PDE in strong form: $ -\DeltaB \bu + (\nablaG \bu) \bu + \nablaG \uppi = \bf$, $\divG \bu = g$ a.e.~on $\G$.
Moreover, the following estimate holds
\[
\|\bu\|_{m+2,p;\G} + \|\uppi\|_{m+1,p;\G} \leq F_{m,p}\big(\|\bf\|_{m,p;\G},\|g\|_{m+1,p;\G}, \|\bu\|_{1,p;\G}\big),
\]
where $F_{m,p}: \RR^3 \rightarrow [0,\infty)$ is a smooth nonlinear function, strictly increasing in each of its three components, and satisfies $F_{m,p}(0,0,0) = 0$.
\end{thm}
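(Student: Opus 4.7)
The strategy is to recast the nonlinear system as a tangent Stokes problem with augmented source by moving the convection term to the right-hand side, and then to bootstrap the regularity by repeated application of Theorem~\ref{thm:stokes} combined with Sobolev embeddings on~$\G$ and H\"older's inequality. Concretely, $(\bu,\uppi)$ is a weak solution of the tangent Stokes system with the same~$g$ and right-hand side
\[
\tilde{\bf} := \bf - (\nablaG \bu)\bu,
\]
so the task reduces to exhibiting progressively better Sobolev regularity of $\tilde{\bf}$ and transferring each gain back to $(\bu,\uppi)$ through Theorem~\ref{thm:stokes}. Uniqueness in Theorem~\ref{thm:stokes} at varying integrability parameters ensures that the Stokes regularity lift is actually applied to our given pair $(\bu,\uppi)$ rather than to some a priori different Stokes solution.

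The first iteration is the delicate one. From $\bu\in\bW^{1,p}_t(\G)$, the Sobolev embeddings on the $d$-dimensional manifold yield $\bu\in\bL^{q_0}(\G)$ for some $q_0\in(p,\infty]$, and H\"older's inequality produces $(\nablaG\bu)\bu\in\bL^{r_0}(\G)$ with $1/r_0=1/p+1/q_0$. The hypotheses $p>d/2$ ($d\geq 3$) and $p>4/3$ ($d=2$) are precisely those which guarantee $r_0>1$ and, more importantly, which allow a finite sequence of bootstrap steps to eventually absorb the convective term into $\bL^p(\G)$. Applying Theorem~\ref{thm:stokes} with integrability parameter $\min(r_0,p)$ (legitimate since the statement holds for every exponent in $(1,\infty)$) gives a first gain $(\bu,\uppi)\in\bW^{2,r_1}_t(\G)\times\WW^{1,r_1}_\#(\G)$ for some $r_1\leq p$.

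We iterate this loop: at step~$j$, having already shown $(\bu,\uppi)\in\bW^{k_j,q_j}_t(\G)\times\WW^{k_j-1,q_j}_\#(\G)$, Sobolev embeddings and H\"older's inequality produce a strictly improved exponent $q_{j+1}$ with $\tilde{\bf}\in\bW^{k_j-1,q_{j+1}}_t(\G)$, and Theorem~\ref{thm:stokes} delivers $(\bu,\uppi)\in\bW^{k_j+1,q_{j+1}}_t(\G)\times\WW^{k_j,q_{j+1}}_\#(\G)$. After finitely many steps the Sobolev index crosses the algebra threshold $(k_j-1)p>d$: at that point $\bW^{k_j-1,p}(\G)$ is a Banach algebra, the convective term inherits the full regularity of $\bu$, and each subsequent pass gains exactly one derivative at ambient integrability~$p$, reaching $\bW^{m+2,p}_t(\G)\times\WW^{m+1,p}_\#(\G)$ in finitely many steps.

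The main obstacle is precisely this first stretch of the bootstrap, where the exponent sequence $(k_j,q_j)$ has to be organized carefully so that it reaches the algebra regime in a finite number of steps; the thresholds $p>d/2$ and $p>4/3$ are tight in the sense that the borderline cases would fail to produce a strict gain in the first iteration. Once the target regularity is achieved, each of $\DeltaB\bu$, $(\nablaG\bu)\bu$, $\nablaG\uppi$, $\divG\bu$ lies in $\bL^p(\G)$, so the weak and strong formulations agree a.e.\ on $\G$. Finally, the estimate is obtained by concatenating the constants from every bootstrap step: each step contributes a polynomial dependence on norms of $\bu$ in previously established Sobolev spaces (owing to the quadratic structure of $(\nablaG\bu)\bu$) and a linear dependence on $\bf$, $g$ from the Stokes estimate of Theorem~\ref{thm:stokes}. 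Composing these yields the function $F_{m,p}$, which is a polynomial (hence smooth), strictly increasing in each argument because each contributing estimate is, and vanishes at the origin because zero data together with $\|\bu\|_{1,p;\G}=0$ forces $\bu\equiv 0$ and $\uppi\equiv 0$.
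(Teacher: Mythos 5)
Your proposal is correct and follows essentially the same route as the paper: recast \eqref{eq:s-weak} as a tangent Stokes problem with source $\bf - (\nablaG\bu)\bu$, bootstrap the integrability of the convective term via Sobolev embeddings and H\"older until it lands in $\bL^p$, then induct on $m$ using the product structure of $(\nablaG\bu)\bu$ and \autoref{thm:stokes} at each stage. The only step you assert rather than establish is that the exponent iteration terminates in finitely many steps; the paper makes this explicit through the recursion $q_{n+1} = \frac{dq_n}{2d-3q_n}$ and a fixed-point contradiction showing the sequence must exceed $\frac{d}{2}$.
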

\subsubsection*{Connection to other Laplacians}
As it has already been mentioned, other operators can be chosen in lieu of the Bochner--Laplace $\DeltaB$ operator, such as the Surface Diffusion operator $\DeltaS= \bP \divG 2D_\G$ and the Hodge Laplacian $\DeltaH$, as the leading second-order differential operator in problems \eqref{eq:manifold-BochnerPoisson}, \eqref{eq:manifold-stokes} and \eqref{eq:manifold-Navier--Stokes};
their difference from the Bochner--Laplace operator is explicitly stated in \eqref{all-Laplace-relationship}.
Our preference for $\DeltaB$ is mostly for theoretical convenience, as the resulting weak formulations do not need to be analyzed ``away from'' kernel spaces, e.g.~\cite[Section 4]{JankuhnOlshanskiiReusken2018}.
Using identities \eqref{all-Laplace-relationship}, we show in \autoref{sec:connection-other-laplacians} that solutions (provided they exist) to the aforementioned problems with $\Delta_S$ or $\DeltaH$ in place of $\DeltaB$ also enjoy $\LL^p$-based higher Sobolev regularity similar to $\DeltaB$.

\subsubsection*{Summary of the main contributions}
Since some of the results that we derived appear in various forms in the literature, we would like to emphasize which results, to the best of our knowledge, can \textit{only} be found in this paper.
\emph{For $d$-dimensional compact manifolds $\G$ without boundary, we make the following contributions}:
\begin{enumerate}[$\bullet$]
\item $\LL^p$-based well-posedness of weak formulations for problems \eqref{our-problems-of-interest} with minimal regularity of data, namely, $\G$ of class $C^2$, $\bf \in (\bW^{1,p^*}_t(\G))'$ and $g \in \LL^p_\#(\G)$.
Notice that this is not an immediate consequence of the $\LL^2$-based well-posedness or $\LL^p$-based higher regularity.
\item $\bW^{m+2,p}_t(\G) \times \WW^{m+1,p}_\#(\G)$ regularity ($m \geq 0$, $1 < p < \infty$) for the tangent Stokes problem for dimensions $d \geq 2$ provided $\G$ is of class $C^{m+2,1}$.
The only similar result that we know of is for $C^{2,1}$-surfaces of dimension $d=2$, $m=0$ and $p=2$ (for the Hodge Laplacian $\DeltaH$ and surface diffusion operator $\Delta_S$) \cite[Lemma~2.1]{ORZ2021}. 
\item $\bW^{m+2, p}_t(\G) \times \WW^{m+1, p}_\#(\G)$ regularity $(m \geq 0)$ for the Navier--Stokes problem for  $p \in (p_0,\infty)$ with $p_0 = p_0(d)$ sufficiently far from $1$, provided $\G$ is of class $C^{m+2,1}$.
\end{enumerate}
Additionally, we would like to emphasize that because we are in a setting of manifolds with limited regularity, our proof techniques differ from those for $C^\infty$-manifolds in, e.g., \cite{Taylor2011, MitreaTaylor:2001, DindosMitrea:2004}. Our approach is more transparent from the perspective of extrinsic calculus, which for the most part hinges on elementary algebraic manipulations combined with functional analytic arguments.

The rest of the paper is structured as follows.
We start in \autoref{sec:difgeo-Sobspace} with basic concepts from differential geometry and Sobolev spaces on manifolds.
We summarize in \autoref{sec:LBeltrami} some of our results from \cite{BenavidesNochettoShakipov2025-a} regarding the Laplace--Beltrami problem that will prove useful throughout the rest of the manuscript.
\autoref{sec:bl} is devoted to proving \autoref{thm:bl} (Bochner--Laplace), while \autoref{sec:stokes} is mostly devoted to proving \autoref{thm:stokes} (tangent Stokes).
\autoref{sec:ns} contains the proofs of \autoref{thm:ns1}, \autoref{thm:ns2} and \autoref{thm:ns3} (tangent Navier--Stokes).
Finally, in \autoref{sec:connection-other-laplacians} we discuss the applicability of our regularity results to other choices of Laplace operators on $\G$.

\section{Preliminaries}

In this section, we review results from differential geometry on hypersurfaces, Sobolev spaces on $\G$, prove a crucial geometric identity (cf.~\autoref{lem:exploiting-symmetry-covarianthessian}), and review elliptic theory results for the \eqref{eq:Laplace--Beltrami-Poisson} problem.
We also reconcile several definitions of Sobolev spaces, which turn out to be useful depending on the context and regularity of $\G$.

\subsection{Differential geometry and Sobolev spaces on \texorpdfstring{$\G$}{Γ}}\label{sec:difgeo-Sobspace}
In this section we collect classical results on the calculus on manifolds (see, e.g.~\cite[\textsection 2]{DziukElliott2013}, \cite[\textsection 1.2]{BonitoDemlowNochetto2020} and \cite{Shahshahani2016}) and introduce and discuss properties of Sobolev spaces on $\G$.
We shall be brief, as we have discussed calculus on hypersurfaces to a great extent in \cite[Section 2]{BenavidesNochettoShakipov2025-a}.
We assume connectedness for convenience, otherwise we could argue independently for each connected component.
Let $\G$ be a $d$-dimensional ($d \geq 2$) compact and connected manifold without boundary embedded in $\RR^{d+1}$ of class $C^{0,1}$ (see \cite[p.~102]{Shahshahani2016} for the definition of $C^m(\G)$ for $m \in \NN$, which naturally extends to $C^{m-1,1}$).
Since $\G$ is compact, there exists a finite atlas $\{(\cV_i,\cU_i,\wt\bchi_i)\}_{i=1}^N$, where each of the charts $\wt\bchi_i: \cV_i \rightarrow \cU_i \cap \G$ are isomorphisms of class $C^{0,1}$ compatible with the orientation of $\G$.
The sets $\cV_i$ are open connected subsets of $\RR^d$ and the $\cU_i$ are open subsets of $\RR^{d+1}$.
An a.e.~defined unit normal vector field $\bnu = (\nu_i)_{i=1}^{d+1}: \G \rightarrow \RR^{d+1}$ to $\G$ is given by $\bnu := \bN/|\bN|$, where $\bN \circ \wt\bchi := \sum_{j=1}^{d+1} \det(\be_j, \nabla \wt\bchi) \be_j$ for each $i = 1,\dotsc,N$; here $\{\be_j\}_{j=1}^{d+1}$ is the canonical basis of $\RR^{d+1}$.
We denote by $\bP := \bI - \bnu \otimes \bnu$ the projection operator onto the tangent plane to $\G$.

\begin{definition}[Sobolev spaces on $\G$]\label{def:Sobolev-spaces-manifold}
Assume $\G$ of class $C^{m-1,1}$, for some $m \in \NN$, and let $\{(\cV_i,\cU_i,\wt\bchi_i)\}_{i=1}^N$ be a finite atlas of $\G$.
For $p \in [1,\infty)$, we define the Sobolev space $\WW^{m,p}(\G)$ by
\begin{equation*}
\WW^{m,p}(\G) := \{ u:\Gamma \rightarrow \RR \mid u \circ \wt\bchi_i \in \WW^{m,p}(\cV_i), \quad \forall i=1,\dotsc,N\},
\end{equation*}
endowed with the norm
\begin{equation}\label{scalar-Sobolev-norm}
\|u\|_{m,p;\G} := \left( \sum_{i=1}^N \|u \circ \wt\bchi_i\|_{m,p;\cV_i}^p \right)^{1/p}.
\end{equation}
\end{definition}
We know (e.g.~\cite[Proposition 3.2]{BenavidesNochettoShakipov2025-a}) that the definition of $\WW^{m,p}(\G)$ is independent of the chosen atlas.
Moreover, given two finite atlas of $\G$, their induced norms \eqref{scalar-Sobolev-norm} are topologically equivalent.

It is easy to see that $C^{m-1,1}(\G) \subseteq \WW^{m,p}(\G)$.
We write $\bW^{m,p}(\G) := [\WW^{m,p}(\G)]^{d+1}$ and endow it with its natural product-space norm.
For $p = 2$, we also write $\bH^{m}(\G) := \bW^{m,2}(\G)$.

For $\G$ of class $C^{0,1}$ and $p \in [1,\infty)$, it is possible to define differential operators \emph{exterior gradient} $\nablaM$, \emph{covariant gradient} $\nablaG := \bP \nablaM$, \emph{exterior divergence} $\div_M$ and \emph{covariant divergence} $\divG$ (the latter two coincide) acting on elements of $\WW^{1,p}(\G)$ (and its vector- and tensor-valued versions) by extending the a.e.~valid formulas for classically differentiable functions on $\G$.
Similarly, if $\G$ is of class $C^{m-1,1}(\G)$ for some $m \in \NN$, the appropriate composition of the aforementioned operators allow us to define differential operators of order $m$ acting on elements of $\WW^{m,p}(\G)$;
in particular, the second order Laplace--Beltrami operator acting on scalar functions is defined by $\DeltaG := \divG \nablaG$.
We omit further details and simply refer to \cite{BenavidesNochettoShakipov2025-a} for their explicit formulas and further discussion.

It is sometimes convenient to write $\nablaG v = (\uD_1 v, \dotsc, \uD_{d+1} v)$ for the $d+1$ components of $\nablaG v = \nablaM v$, where $v$ is a scalar field.
This notation allows us, in particular, to express the action of $\divG$ and $\DeltaG$ as
\begin{equation*}
\divG \bv = \sum_{k=1}^{d+1} \uD_k v_k, \qquad \qquad \DeltaG v = \sum_{k=1}^{d+1} \uD_k \uD_k v.
\end{equation*}
For $\G$ of class $C^{1,1}$, we write $\bB := \nablaM \bnu = \nablaG \bnu \in \bbL^\infty(\G)$ called the \emph{shape operator} of $\G$ (aka \emph{Weingarten map}).
Since $|\bnu|^2 = 1$, it follows that $\bB \bnu = \mathbf{0}$ a.e.~on $\G$.
Notice that for $\G$ of class $C^2$, it can be proved that $\bB$ coincides with the Hessian matrix of the distance function to $\G$, and so it is symmetric.

If $\G$ is of class $C^2$, we can express $\nablaM$ and $\nablaG$ (and all other aforementioned differential operators on $\G$) in a purely extrinsic---that is, independent of parametrizations---manner:
\begin{gather*}
\nablaG v = \nablaM v = \bP \left(\nabla v^e\right)\rvert_\G,\\
\nablaM \bv = \left(\nabla \bv^e\right)\rvert_\G \bP, \qquad \nablaG \bv = \bP \left(\nabla \bv^e\right)\rvert_\G \bP,
\end{gather*}
where $v^e$ (resp.~$\bv^e$) denotes any extension of $v$ (resp., $\bv$) to an $(d+1)$-dimensional tubular neighborhood $\Omega_\delta$ of $\G$ with the following property:
if $v:\G \rightarrow \RR$ is of class $C^2$, then so is $v^e: \Omega_\delta \rightarrow \RR$.
In this case, it holds that
\begin{equation}\label{symmetry-cov-Hes}
\nablaG \nablaG v = \bP \nablaM \nablaM v = \bP (\nabla\nabla v^e)\rvert_\G \bP - ((\nabla v^e)\rvert_\G \cdot \bnu) \bB,
\end{equation}
which shows that the \emph{covariant Hessian} $\nablaG\nablaG v$ is symmetric.
An instance of such extension is given by \cite[eq.~(1.62)]{BonitoDemlowNochetto2020}.
Notice that, if $\G$ is not better than of class $C^2$, then the \emph{constant normal} extension is not a valid choice---if $v: \G \rightarrow \RR$ is of class $C^2$, then $v^e$ is not necessarily of class $C^2$.
Formula \eqref{symmetry-cov-Hes} seems to be well-known within the community of numerical methods for surface PDEs for $\G$ of class $C^3$ and the constant normal extension, for which the second term in the right-hand side vanishes.
We are, however, unaware of a specific reference for the proof.
For completeness, we prove \eqref{symmetry-cov-Hes} in all its generality in \autoref{prop:sym-cov-Hess}.

Notice that in general $\uD_i \uD_j v \neq \uD_j \uD_i v$ for each $i \neq j$.
In fact, if $\G$ and $v: \G \rightarrow \RR$ are of class $C^2$, we have that
\begin{equation}\label{commutators:second-tangential-derivatives}
\uD_i \uD_j v - \uD_j \uD_i v = (\bB \nablaM v)_j \nu_i - (\bB \nablaM v)_i \nu_j,
\end{equation}
for all $i,j=1,\dotsc,d+1$;
we refer to \cite[Lemma~2.6]{DziukElliott2013}, which states the result without a proof.
For completeness, we provide a short proof in \autoref{prop:commutator-identity}.

The following result states that Sobolev spaces on $\G$ behave well under multiplication by smooth-enough functions.
\begin{lemma}[product rule on $\WW^{m,p}(\G)$]\label{lem:product-by-smooth}
Assume $\G$ is of class $C^{m-1,1}$ for some $m \in \NN$, and let $p \in [1,\infty)$.
Then for each $\theta \in C^{m-1,1}(\G)$ and $u \in \WW^{m,p}(\G)$, the product $\theta u$ belongs to $\WW^{m,p}(\G)$,
\begin{equation*}
\nablaM (\theta u) = \theta \nablaM u + u \nablaM \theta, \quad \text{a.e.~on $\G$}.
\end{equation*}
and there exists a constant $C_\theta>0$, depending only on $\theta$, such that
\begin{equation*}
\|\theta u\|_{m,p;\G} \leq C_\theta \|u\|_{m,p;\G}.
\end{equation*}
\end{lemma}
\begin{thm}[density]\label{thm:density-C^m}
Let $p \in [1,\infty)$ and $m \in \NN$.
If $\G$ is of class $C^{m-1,1}$ (resp.~$C^m$), then $C^{m-1,1}(\G)$ (resp.~$C^m(\G)$) is dense in $\WW^{m,p}(\G)$.
\end{thm}
It is helpful to endow $\WW^{m,p}(\G)$ with an equivalent norm that is parametrization-independent and resembles its flat-domain counterpart.
This is the content of the following result.

\begin{proposition}[equivalent norm in $\WW^{m,p}(\G)$]\label{prop:param_ind-Sobolev-norm}
Assume $\G$ is of class $C^{m-1,1}$, for some $m \in \NN$, and let $p \in [1,\infty)$.
Then, the mapping
\begin{equation}\label{param_ind-scalar-Sobolev-norm}
u \mapsto \left(\sum_{k=0}^m \|\nablaM^k u \|_{0,p;\G} ^p\right)^{1/p},
\end{equation}
defines an equivalent norm in $\WW^{m,p}(\G)$.
Here, $\nablaM^k$ denotes the \emph{$k$-fold weak extrinsic gradient}---i.e., $\nablaM^k u$ is the $k$-dimensional tensor defined by $(\nablaM^k u)_\bj =
\uD_{j_{k}} \cdots \uD_{j_1} u$ for each $\bj = (j_i)_{i=1}^k \in \{1,\dotsc,d+1\}^k$;
we also write $\nablaM^0 u := u$.
\end{proposition}
Throughout the rest of the paper, it will be usually convenient to utilize \eqref{param_ind-scalar-Sobolev-norm} as the ``main'' norm on $\WW^{m,p}(\G)$ and consequently we will also denote it by $\|\cdot\|_{m,p;\G}$ when no confusion arises.
Notice that this equivalent norm naturally induces a notion of Sobolev norms on subsets of $\G$:
let $\gamma \subseteq \G$ be a measurable set with respect to the Lebesgue measure on $\G$, we write
\begin{equation*}
\|v\|_{m,p;\gamma} := \left(\sum_{k=0}^m \|1_\gamma \, \nablaM^k v \|_{0,p;\G} ^p\right)^{1/p}, \qquad \forall v \in \WW^{m,p}(\G),
\end{equation*}
where $1_\gamma$ denotes the characteristic function of $\G$.

We now compare adopted definition \autoref{def:Sobolev-spaces-manifold} of Sobolev spaces and a few others found in the literature.

\begin{remark}[$\WW^{m,p}(\G)$ coincides with $H^p_m(\G)$ of Hebey and Robert \cite{HebeyRobert2008}]
\autoref{thm:density-C^m} and \autoref{prop:param_ind-Sobolev-norm} shows that $\WW^{m,p}(\G)$ (cf.~\autoref{def:Sobolev-spaces-manifold}) coincides with the space $H^p_m(\G)$ of \cite[Definition 2.1, Proposition 2,2]{HebeyRobert2008}.
\end{remark}
\begin{proposition}[integration by parts on $C^{1,1}$-manifolds]
If $\G$ is of class $C^{1,1}$, then for each $u \in \WW^{1,1}(\G)$ and $\bvarphi \in \bC^1(\G)$ it holds that
\begin{equation}\label{eq:int-by-parts}
\int_\Gamma u  \, \divG \bvarphi = - \int_\Gamma \nablaG u \cdot \bvarphi + \int_\Gamma \tr(\bB) u \bvarphi \cdot \bnu.
\end{equation}
\end{proposition}
This is consistent with the definition of Sobolev spaces of Dziuk and Elliott \cite{DziukElliott2013} for $\G$ of class $C^2$.
We proved \eqref{eq:int-by-parts} for $\G$ of class $C^{1,1}$ in \cite[Proposition 3.9]{BenavidesNochettoShakipov2025-a}.

We introduce the following spaces
\begin{align*}
\WW^{m,p}_\#(\G) & := \{\textstyle  q \in \WW^{m,p}(\G): \int_\G q = 0 \},\\
\bW^{m,p}_t(\G) & := \{ \bv \in \bW^{m,p}(\G): \bv \cdot \bnu = 0~\text{a.e.~on $\G$} \},
\end{align*}
which are respectively closed subspaces of $\WW^{m,p}(\G)$ and $\bW^{m,p}(\G)$.
Notice that, if $\G$ is of class $C^{m,1}$ for some nonnegative integer $m$, $\WW^{m+1,p}(\G) \subseteq \WW^{m,p}(\G)$ and $\nablaM u \in \bW^{m,p}_t(\G)$ for each $u \in \WW^{m+1,p}(\G)$.
The following density result, which is a slight improvement over \cite[Lemma~3.7]{Miura-PartI}), will be instrumental in this work.
\begin{lemma}[density]\label{lem:density-tangential}
Let $p \in [1,\infty)$, $m \in \NN$ and an integer $0 \leq k \leq m-1$.
If $\G$ is of class $C^m$, then the space of everywhere tangential $C^{m-1}$-vector fields $\bC^{m-1}_t(\G)$ is dense in $\bW^{k,p}_t(\G)$.
Similarly, if $m \geq 2$ and $\G$ is of class $C^{m-1,1}$, then the space of a.e.~tangential $C^{m-2,1}$-vector fields $\bC^{m-2,1}_t(\G)$ is dense in $\bW^{k,p}_t(\G)$.
\end{lemma}

If $\G$ is of class $C^2$, from \cite[eq.~(2.17)]{JankuhnOlshanskiiReusken2018} and \autoref{lem:density-tangential} we know that
\begin{equation}\label{WeingartenExtr-for-tangent}
\bB \bv = - (\nablaM^T \bv) \bnu, \qquad \forall \bv \in \bC^1_t(\G).
\end{equation}
Hence, recalling that $\nablaG \bv = \bP \nablaM \bv$ and $\bP = \bI - \bnu \otimes \bnu$, we obtain that
\begin{equation}\label{ExtCov-for-tangent}
\nablaM \bv = \nablaG \bv - \bnu \bv^T \bB, \qquad \forall \bv \in \bC^1_t(\G).
\end{equation}
This shows that for tangential vector fields, $\nablaM$ and $\nablaG$ only differ in lower order terms.

For the rest of this section $p \in [1,\infty)$ and $m \in \NN$ are arbitrary unless stated otherwise, and whenever we mention the space $\WW^{m,p}(\G)$ we implicitly assume $\G$ is of class $C^{m-1,1}$.

Since $\G$ is compact, it follows that the Sobolev embeddings of the first type (Gagliardo--Nirenberg--Sobolev) on $\G$ behave just like their counterparts for a bounded open subset of $\RR^d$ with $C^1$ boundary.
\begin{proposition}[Gagliardo–Nirenberg–Sobolev inequality {\cite[\textsection 4.3]{HebeyRobert2008}}]\label{prop:GNG-embeddings}
Let $1 \leq p < q < \infty$, and any integer $0 \leq m < k$ such that
\begin{equation*}
k - \frac{d}{p} \geq m - \frac{d}{q}
\end{equation*}
Then, $\WW^{k,p}(\G)$ is continuously embedded in $\WW^{m,q}(\G)$.

As a consequence, if $kp < d$ and $1 \leq r \leq \frac{dp}{d-kp}$, then $\WW^{k,p}(\G)$ is continuously embedded in $\LL^{r}(\G)$.
\end{proposition}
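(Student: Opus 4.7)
The plan is to reduce the manifold embedding to the classical Gagliardo--Nirenberg--Sobolev embedding in bounded open subsets of $\RR^d$ via a partition-of-unity/localization argument, leveraging the finite atlas machinery developed in \autoref{sec:diff-geo} and the norm equivalence in \autoref{thm:equivalence-parametric-surface}.

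First, I would fix a finite atlas $\{(\cV_i,\cU_i,\wt\bchi_i)\}_{i=1}^N$ and refined open sets $\cW_i$ with $\overline{\cW_i} \subseteq \cU_i$ covering $\G$, as described in \autoref{sec:diff-geo}. I then choose a partition of unity $\{\theta_i\}_{i=1}^N \subset C^\infty(\G)$ subordinate to $\{\cW_i \cap \G\}_{i=1}^N$ (which exists since $\G$ is $C^{\max\{m,2\}}$ and compact). Given $u \in \WW^{k,p}(\G)$, write $u = \sum_{i=1}^N \theta_i u$. By \autoref{lem:product-by-smooth} applied iteratively, each $\theta_i u$ belongs to $\WW^{k,p}(\G)$, is supported in $\cW_i \cap \G$, and satisfies $\|\theta_i u\|_{k,p;\G} \leq C \|u\|_{k,p;\G}$, where $C$ depends only on $\G$, $k$, and the chosen partition.

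Next, because $\theta_i u$ is supported in $\cW_i \cap \G$, \autoref{thm:equivalence-parametric-surface} applies and yields $\widetilde{\theta_i u} \in \WW^{k,p}_0(\cV_i)$ with $\|\widetilde{\theta_i u}\|_{k,p;\cV_i} \lesssim \|\theta_i u\|_{k,p;\G}$. On the bounded open set $\cV_i \subseteq \RR^d$ (which is Lipschitz, even $C^2$, as the image of a $C^2$ chart domain) the classical Gagliardo--Nirenberg--Sobolev theorem for $\WW^{k,p}_0(\cV_i)$ under the scaling condition $k - d/p = m - d/q$ gives $\widetilde{\theta_i u} \in \WW^{m,q}_0(\cV_i)$ together with $\|\widetilde{\theta_i u}\|_{m,q;\cV_i} \lesssim \|\widetilde{\theta_i u}\|_{k,p;\cV_i}$. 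Applying the reverse direction of \autoref{thm:equivalence-parametric-surface} at the Sobolev level $m$ transfers this back to $\|\theta_i u\|_{m,q;\G} \lesssim \|\widetilde{\theta_i u}\|_{m,q;\cV_i}$, and summing over $i = 1,\dots,N$ via the triangle inequality produces $\|u\|_{m,q;\G} \leq C \|u\|_{k,p;\G}$, proving the first embedding.

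For the consequence, in the case $kp < d$ set $q := \frac{dp}{d-kp}$ and $m := 0$, for which $k - d/p = -d/q = m - d/q$, and apply the just-proved embedding to obtain $\WW^{k,p}(\G) \hookrightarrow \LL^q(\G)$. For any $1 \leq q' \leq q$, compactness of $\G$ and H\"older's inequality then give $\LL^q(\G) \hookrightarrow \LL^{q'}(\G)$, concluding the argument. The only mildly delicate point is verifying the norm equivalence in the target Sobolev space $\WW^{m,q}$ with a potentially different integrability exponent---but this is covered verbatim by \autoref{thm:equivalence-parametric-surface}, whose proof is insensitive to the choice of $p \in [1,\infty)$. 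Thus no genuine obstacle arises; all the work is bundled into \autoref{lem:product-by-smooth}, \autoref{thm:equivalence-parametric-surface}, and the flat-domain Sobolev embedding theorem.
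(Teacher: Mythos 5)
Your proof is correct. Note, however, that the paper does not actually prove this proposition: it is stated with a citation to \cite[\textsection 4.3]{HebeyRobert2008} and no proof is given, so your localization argument is a self-contained substitute for (and essentially a reproduction of) what the cited reference does, namely reduce to the Euclidean Gagliardo--Nirenberg--Sobolev theorem via a finite atlas, a subordinate partition of unity, \autoref{lem:product-by-smooth}, and the norm equivalence of \autoref{thm:equivalence-parametric-surface}. Two minor points are worth tightening. First, a $C^\infty$ partition of unity on a manifold that is only $C^{\max\{k,2\}}$ is an overstatement; you should take $\{\theta_i\}$ of class $C^{\max\{k,2\}}$ (as the paper does in the proof of \autoref{lem:L2-based-regularity-LaplaceBeltrami}, citing \cite[Ch.~4, 11.~Theorem]{Shahshahani2016}), which is exactly the regularity \autoref{lem:product-by-smooth} requires to conclude $\theta_i u \in \WW^{k,p}(\G)$. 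Second, your parenthetical claim that $\cV_i$ is Lipschitz or $C^2$ is both unjustified (the atlas only provides open connected subsets of $\RR^d$) and unnecessary: since $\widetilde{\theta_i u} \in \WW^{k,p}_0(\cV_i)$ has compact support, you extend by zero to $\RR^d$ and invoke the whole-space embedding, so no boundary regularity of $\cV_i$ is ever needed. With those adjustments the argument is complete, including the final step $\LL^q(\G) \hookrightarrow \LL^{q'}(\G)$ for $q' \leq q$ by H\"older on the finite-measure set $\G$.
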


Morrey's inequality is also valid on $\G$.
We refer to \cite[p.~389]{HebeyRobert2008} for the precise definition of the H\"{o}lder space $C^{0,\alpha}(\G) \subseteq C(\G)$ ($0 < \alpha < 1$).
\begin{proposition}[Morrey's inequality {\cite[Theorem 6.2]{HebeyRobert2008}}]\label{prop:Morrey-inequality}
Let $p > d$.
Then, $\WW^{1,p}(\G)$ is continuously embedded on $C^{0,\alpha}(\G)$, where $\alpha := 1 - \frac{d}{p}$.
\end{proposition}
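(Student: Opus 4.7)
My approach is to reduce to the classical Morrey inequality in flat domains via the atlas introduced in \autoref{sec:diff-geo} combined with a partition of unity, and then transfer the resulting Hölder regularity back to $\G$ using the bi-Lipschitz character of $C^2$ charts.

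More concretely, I would fix the finite atlas $\{(\cV_i,\cU_i,\wt\bchi_i)\}_{i=1}^N$ together with the shrunken covering $\{\cW_i\}_{i=1}^N$ of $\G$, and pick a $C^2$ partition of unity $\{\theta_i\}_{i=1}^N$ subordinate to $\{\cW_i\cap\G\}$. Setting $u_i := \theta_i u$, \autoref{lem:product-by-smooth} gives $u_i\in\WW^{1,p}(\G)$ with $\|u_i\|_{1,p;\G}\leq C\|u\|_{1,p;\G}$; since each $u_i$ is supported in $\cW_i\cap\G$, \autoref{thm:equivalence-parametric-surface} provides $\wt{u_i}\in\WW^{1,p}_0(\cV_i)$ with equivalent norm. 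The classical Morrey inequality in $\RR^d$ then yields a continuous representative of $\wt{u_i}$ satisfying
\[
\|\wt{u_i}\|_{\LL^\infty(\cV_i)} + [\wt{u_i}]_{C^{0,\alpha}(\overline{\cV_i})} \leq C\,\|\wt{u_i}\|_{1,p;\cV_i},
\]
with $\alpha = 1 - d/p$.

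For the transfer back to $\G$, the key observation is that on the precompact set $\wt\bchi_i^{-1}(\overline{\cW_i\cap\G})$ the $C^2$ chart $\wt\bchi_i$ is a diffeomorphism with nondegenerate first fundamental form, hence bi-Lipschitz with respect to the ambient Euclidean metrics. Consequently $u_i = \wt{u_i}\circ\wt\bchi_i^{-1}$ inherits Hölder continuity on $\cU_i\cap\G$, and since it vanishes outside its support it extends to an element of $C^{0,\alpha}(\G)$. Summing and using sub-additivity of the Hölder seminorm yields
\[
\|u\|_{C^{0,\alpha}(\G)} \leq \sum_{i=1}^N \|u_i\|_{C^{0,\alpha}(\G)} \leq C\,\|u\|_{1,p;\G}.
\]

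The one step that merits care is bounding the Hölder seminorm of $u_i$ \emph{globally} on $\G$, not merely on $\cW_i\cap\G$: for two points $\bx,\by\in\G$ at least one of which lies outside $\mathrm{supp}\,\theta_i$, one combines the $\LL^\infty$ bound on $u_i$ with a uniform positive lower bound on $|\bx-\by|$ arising from a Lebesgue-number argument applied to the finite open cover $\{\cW_i\cap\G\}_{i=1}^N$ of the compact manifold $\G$. This is the most delicate---though standard---step of the argument; everything else reduces to bookkeeping through \autoref{lem:product-by-smooth}, \autoref{thm:equivalence-parametric-surface}, and the Euclidean Morrey embedding.
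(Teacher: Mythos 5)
The paper offers no proof of this proposition: it is imported wholesale from \cite[Theorem 6.2]{HebeyRobert2008}, so there is no internal argument to compare against. Your localization proof is the standard self-contained route and sits well with the machinery the paper does build (the atlas with shrunken covering $\overline{\cW_i} \subseteq \cU_i$, \autoref{lem:product-by-smooth}, and \autoref{thm:equivalence-parametric-surface} are exactly the tools used in the proof of \autoref{lem:L2-based-regularity-LaplaceBeltrami}). The reduction to the Euclidean Morrey inequality is sound provided you note that it applies on $\cV_i$ because $\wt{u_i} \in \WW^{1,p}_0(\cV_i)$ extends by zero to $\RR^d$ (so no boundary regularity of $\cV_i$ is needed), and you should also say a word about which metric defines $C^{0,\alpha}(\G)$: Hebey--Robert use the geodesic distance while you use the ambient Euclidean one; on a compact embedded $C^2$ manifold these are comparable (cf.~the length-space discussion in Step 3 of \autoref{lem:covariant-gradient-injective}), so the H\"older classes agree with equivalent seminorms.

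The one step that is wrong as written is the global H\"older bound. You assert that whenever at least one of $\bx,\by$ lies outside $\mathrm{supp}\,\theta_i$ there is a uniform positive lower bound on $|\bx-\by|$; this is false, since $\bx \in \mathrm{supp}\,\theta_i$ and $\by \notin \mathrm{supp}\,\theta_i$ can be arbitrarily close, and a Lebesgue number of the cover provides no such bound. The correct dichotomy is: (i) both points lie in $\cW_i \cap \G$, where the transferred chart estimate controls the difference quotient regardless of whether the points meet the support; (ii) one point, say $\by$, lies outside $\cW_i \cap \G$, so that $u_i(\by) = 0$, and then either $u_i(\bx) = 0$ as well, or $\bx \in \mathrm{supp}\,\theta_i$, in which case $|\bx - \by| \geq \mathrm{dist}(\mathrm{supp}\,\theta_i, \G \setminus \cW_i) =: \delta_i > 0$ because $\mathrm{supp}\,\theta_i$ is a compact subset of the relatively open set $\cW_i \cap \G$; this gives $|u_i(\bx) - u_i(\by)| \leq \|u_i\|_{\LL^\infty(\G)}\,\delta_i^{-\alpha}\,|\bx-\by|^{\alpha}$. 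With the dichotomy drawn at $\cW_i \cap \G$ rather than at $\mathrm{supp}\,\theta_i$, and the gap $\delta_i$ obtained from compactness rather than a Lebesgue number, the argument closes.
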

The Rellich--Kondrachov compactness theorem is valid on $\G$.

\begin{proposition}[compact Sobolev embedding {\cite[Theorem 8.1]{HebeyRobert2008}}]\label{prop:Rellich-Kondrachov}
Let $1 \leq p < \infty$ and integers $0 \leq m < k$ such that $kp < d$.
Then, $\WW^{k,p}(\G)$ is compactly embedded in $\WW^{m,q}(\G)$ for each $1 \leq q < \frac{dp}{d-(k-m)p}$.
In particular, $\WW^{1,p}(\G)$ is compactly embedded in $\LL^q(\G)$ for $1 \leq p < d$ and $1 \leq q < \frac{dp}{d-p}$.
\end{proposition}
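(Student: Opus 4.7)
The plan is to reduce the compact embedding on $\G$ to the classical Rellich--Kondrachov theorem on flat bounded domains via a standard localization and partition-of-unity argument. Since $\G$ is compact and of class $C^{\max\{k,2\}}$, fix a finite atlas $\{(\cV_i,\cU_i,\wt\bchi_i)\}_{i=1}^N$ together with subordinate open sets $\cW_i$ satisfying $\overline{\cW_i}\subset \cU_i$ and still covering $\G$, and choose a smooth partition of unity $\{\theta_i\}_{i=1}^N \subset C^{\max\{k,2\}}(\G)$ with $\operatorname{supp}(\theta_i) \subset \cW_i\cap\G$.

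The first step is to show that the embedding $\WW^{k,p}(\G) \hookrightarrow \WW^{m,q}(\G)$ is compact. Given a bounded sequence $\{u_n\} \subset \WW^{k,p}(\G)$, decompose $u_n = \sum_{i=1}^N \theta_i u_n$. By iterating \autoref{lem:product-by-smooth}, each $\theta_i u_n$ lies in $\WW^{k,p}(\G)$ with support contained in $\cW_i \cap \G$, and the sequence $\{\theta_i u_n\}_n$ is bounded in $\WW^{k,p}(\G)$ uniformly in $n$. By \autoref{thm:equivalence-parametric-surface}, the pullbacks $\wt{\theta_i u_n} \in \WW^{k,p}_0(\cV_i)$ form a bounded sequence. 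The classical Rellich--Kondrachov theorem on the bounded Lipschitz domain $\cV_i \subset \RR^d$ (applied to functions with compact support inside $\cV_i$) yields a subsequence converging strongly in $\WW^{m,q}_0(\cV_i)$ for each $1 \leq q < \tfrac{dp}{d-(k-m)p}$. A diagonal extraction across $i=1,\dotsc,N$ produces a single subsequence $\{u_{n_j}\}$ such that $\wt{\theta_i u_{n_j}}$ converges strongly in $\WW^{m,q}_0(\cV_i)$ for every $i$.

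Applying the reverse direction of \autoref{thm:equivalence-parametric-surface} on each chart then shows $\{\theta_i u_{n_j}\}$ converges strongly in $\WW^{m,q}(\G)$, and summing over $i$ gives strong convergence of $u_{n_j}$ in $\WW^{m,q}(\G)$. This establishes compactness of the embedding. The ``in particular'' statement is the special case $k=1, m=0$.

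The main technical point is ensuring the norm equivalence \eqref{eq:norm-equivalence} transfers \emph{both} the boundedness upward into the flat domain and the strong convergence back down to $\G$; this relies on $\operatorname{supp}(\theta_i u_n)$ being contained strictly inside the chart image $\cW_i \cap \G \subsetneq \cU_i \cap \G$, which is arranged by the choice of $\{\cW_i\}$. All remaining ingredients (product rule, density of smooth functions, classical Rellich--Kondrachov on Lipschitz domains with compactly supported test functions) are already recorded in this section.
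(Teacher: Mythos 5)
Your argument is correct, but it is worth noting that the paper does not prove this proposition at all: it is imported verbatim from \cite[Theorem 8.1]{HebeyRobert2008}, so you have supplied a self-contained proof where the paper only cites. Your localization strategy (partition of unity subordinate to $\{\cW_i\}$, pullback via \autoref{thm:equivalence-parametric-surface}, classical Rellich--Kondrachov in the parametric domain, diagonal extraction, and transfer back through the lower bound in \eqref{eq:norm-equivalence}) is exactly the technique the paper uses elsewhere for its regularity lemmas (cf.\ the proof of \autoref{lem:L2-based-regularity-LaplaceBeltrami}), so it is entirely consonant with the paper's toolkit; what it buys is independence from the intrinsic Riemannian-geometric proof in Hebey--Robert and transparency about which regularity of $\G$ is actually used. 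Two small points you should make explicit: first, the paper only asserts that the $\cV_i$ are open connected subsets of $\RR^d$, so to invoke the flat Rellich--Kondrachov theorem you should observe that $\wt\bchi_i^{-1}(\overline{\cW_i}\cap\G)$ is a compact subset of $\cV_i$ (continuity of $\wt\bchi_i^{-1}$ plus compactness of $\overline{\cW_i}\cap\G$), extend $\wt{\theta_i u_n}$ by zero, and apply the compact embedding for $\WW^{k,p}_0$ of a bounded ball containing that compact set; second, the general flat statement $\WW^{k,p}_0\hookrightarrow\hookrightarrow\WW^{m,q}$ for $q<\frac{dp}{d-(k-m)p}$ is usually recorded only for $k=1$, $m=0$, so a one-line reduction (apply the first-order case to derivatives of order $\leq m$, which are bounded in $\WW^{k-m,p}$, after a continuous Gagliardo--Nirenberg--Sobolev step) closes that gap. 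With these details added the proof is complete.
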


\begin{proposition}[compact embedding in H\"older spaces {\cite[Theorem 8.2]{HebeyRobert2008}}]\label{prop:Holder-compactly-embedded}
Let $d < p < \infty$.
Then, $\WW^{1,p}(\G)$ is compactly embedded in $C^{0,\alpha}(\G)$ for all $\alpha \in (0,1-\frac{d}{p})$.
\end{proposition}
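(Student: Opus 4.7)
The plan is to combine Morrey's inequality (Proposition \ref{prop:Morrey-inequality}) with the classical Arzel\`a--Ascoli theorem on the compact metric space $\G$ and a standard interpolation inequality between H\"older exponents. Let $\beta := 1 - \frac{d}{p} \in (0,1)$ and fix $\alpha \in (0,\beta)$.

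First, by Proposition \ref{prop:Morrey-inequality} the embedding $\WW^{1,p}(\G) \hookrightarrow C^{0,\beta}(\G)$ is continuous, so any bounded sequence $\{u_n\} \subset \WW^{1,p}(\G)$ yields a sequence uniformly bounded in $C^{0,\beta}(\G)$. Equip $\G$ with the metric induced by the Euclidean distance in $\bRR^{d+1}$; since $\G$ is compact and the $u_n$ share the common H\"older modulus of continuity associated with the uniform bound on their $C^{0,\beta}$ seminorms, the family $\{u_n\}$ is uniformly bounded and equicontinuous. By the Arzel\`a--Ascoli theorem there exists $u \in C(\G)$ and a subsequence (not relabeled) such that $u_n \to u$ uniformly, i.e.\ in $C^0(\G)$. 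Passing to the limit in the H\"older quotient, Fatou-type reasoning shows $u \in C^{0,\beta}(\G)$ with $[u]_{C^{0,\beta}} \leq \liminf_n [u_n]_{C^{0,\beta}}$, so $\{u_n - u\}$ remains uniformly bounded in $C^{0,\beta}(\G)$.

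Second, upgrade the $C^0$ convergence to $C^{0,\alpha}$ via interpolation. For any $v \in C^{0,\beta}(\G)$ one has
\[
[v]_{C^{0,\alpha}(\G)} \leq (2 \|v\|_{C^0(\G)})^{1 - \alpha/\beta} \, [v]_{C^{0,\beta}(\G)}^{\alpha/\beta},
\]
which follows by splitting the supremum defining $[v]_{C^{0,\alpha}}$ into the regions $|x-y| \leq \delta$ and $|x-y| > \delta$ and optimizing over $\delta > 0$. Applied to $v = u_n - u$, the factor $\|u_n - u\|_{C^0(\G)}^{1-\alpha/\beta} \to 0$ while $[u_n - u]_{C^{0,\beta}(\G)}^{\alpha/\beta}$ stays bounded, hence $u_n \to u$ in $C^{0,\alpha}(\G)$. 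This proves the asserted compactness.

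The only mildly delicate step is the interpolation inequality on $\G$, but since it depends only on the metric structure and the definition of the H\"older seminorm, it carries over verbatim from the Euclidean case. All other ingredients are direct consequences of results already established in the paper (Proposition \ref{prop:Morrey-inequality}) or of general facts about compact metric spaces.
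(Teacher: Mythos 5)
Your argument is correct. Note that the paper does not prove this proposition at all: it is stated as a citation to \cite[Theorem 8.2]{HebeyRobert2008}, so there is no in-paper proof to compare against. What you supply is the standard self-contained argument: the continuous embedding $\WW^{1,p}(\G) \hookrightarrow C^{0,\beta}(\G)$ with $\beta = 1-\frac{d}{p}$ from Morrey's inequality, followed by the classical fact that $C^{0,\beta}(\G)$ embeds compactly into $C^{0,\alpha}(\G)$ for $\alpha<\beta$, proved via Arzel\`a--Ascoli plus the H\"older interpolation inequality $[v]_{C^{0,\alpha}} \leq (2\|v\|_{C^0})^{1-\alpha/\beta}[v]_{C^{0,\beta}}^{\alpha/\beta}$. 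All the steps check out: the two-region splitting and optimization over $\delta$ give exactly the stated inequality, the Fatou-type passage to the limit in the H\"older quotient is valid pointwise in each pair $(x,y)$ before taking the supremum, and equicontinuity follows from the uniform $C^{0,\beta}$ bound. The only point worth flagging is your choice of the ambient Euclidean metric on $\G$: the reference \cite[p.~389]{HebeyRobert2008} defines $C^{0,\alpha}(\G)$ via the geodesic distance, but on a compact embedded $C^2$ manifold the geodesic and Euclidean distances are bi-Lipschitz equivalent, so the resulting H\"older seminorms are equivalent and your argument goes through unchanged; it would be worth one sentence to say so. In short, your proof is a complete elementary replacement for the external citation, which is arguably preferable for a self-contained exposition.
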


\begin{remark}[compact Sobolev embedding]\label{rem:W1p-compactly-embedded-in-L^p}
Since $\frac{dp}{d-p} \to \infty$ as $p \to d$, we have from \autoref{prop:Rellich-Kondrachov} and \autoref{prop:Holder-compactly-embedded} that $\WW^{1,p}(\G)$ is compactly embedded in $\LL^p(\G)$ for all $1 \leq p < \infty$.
\end{remark}

Recall the following identity valid for $\G$ of class $C^3$ and for each $\bv \in \bC^2_t(\G)$ \cite[Lemma 2.1]{JankuhnOlshanskiiReusken2018}:
\begin{equation*}
\bP \divG \nabla^T_\G \bv = \nablaG \divG \bv + \left(\tr(\bB)\bB - \bB^2\right)\bv.
\end{equation*}
We will show that, loosely speaking, said identity is still valid in a weak sense for $\G$ of class $C^2$ if we replace $\nablaG^T \bv$ by $\nablaG \bv$ on the left-hand side (hence obtaining the Bochner Laplacian $\DeltaB \bv = \bP \divG \nablaG \bv$) as long as we test against gradients of scalar functions.
This elementary observation, which relies on $\G$ not having a boundary, will play a crucial role throughout this paper.
We refer to \autoref{app:calculus} for its proof.
\begin{lemma}[weak commutator identity]\label{lem:exploiting-symmetry-covarianthessian}
Assume $\G$ is of class $C^2$.
Then for each $\phi \in C^2(\G)$ and $\bv \in \bC^1_t(\G)$ there holds that
\begin{equation}\label{exploiting-symmetry-covarianthessian}
- \int_\G \nablaG \bv : \nablaG\nablaG \phi = - \int_\G \divG \bv \, \DeltaG \phi + \int_\G \left(\tr(\bB)\bB - \bB^2\right) \bv \cdot \nablaG \phi.
\end{equation}
\end{lemma}

\subsection{The Laplace-Beltrami problem}\label{sec:LBeltrami}
In this section, we summarize some of our main results concerning the Laplace--Beltrami problem posed on $\G$ from the companion paper \cite{BenavidesNochettoShakipov2025-a}.
We begin with the strong formulation:
\begin{equation}
\label{eq:Laplace--Beltrami-Poisson} \tag{LB} -\DeltaG u = f,
\end{equation}
where $\DeltaG$ is the Laplace--Beltrami operator $\DeltaG = \divG \nablaG$. Before we begin, the following result will be of use in the forthcoming analysis.
\begin{proposition}[surjectivity of the divergence operator]\label{prop:surj-div}
Let $p \in (1,\infty)$ and assume $\G$ is of class $C^{0,1}$.
Then, the linear and continuous operator $\divG : \bL^p_t(\G) \rightarrow (\WW^{1,p^*}_\#(\G))'$ defined by
\begin{equation*}
\langle \divG \bu, v \rangle = -\int_\G \bu \cdot \nablaG v, \qquad \forall \bu \in \bL^p_t(\G), \ \forall v \in \WW^{1,p^*}_\#(\G),
\end{equation*}
is surjective.
\end{proposition}
We now state the main result from \cite{BenavidesNochettoShakipov2025-a} concerning well-posedness and higher regularity of solutions to the Laplace--Beltrami problem.
\begin{thm}[Laplace--Beltrami] \label{thm:lb}
Let $p \in (1,\infty)$ and assume that $\G$ is of class $C^1$ or of class $C^{0,1}$ if $p \in (2-\varepsilon,2+\varepsilon)$ for a certain $\varepsilon>0$ uniquely determined by $\G$.
Then, for every $f \in (\WW^{1,p^*}_\#(\G))'$, there exists a unique $u \in \WW^{1,p}_\#(\G)$ such that
\begin{equation} \label{eq:lb-weak}
\int_\G \nablaG u \cdot \nablaG v = \langle f, v \rangle, \qquad \forall v \in \WW^{1,p^*}_\#(\G).
\end{equation}
Moreover, there exists a constant $C>0$, depending only on $\G$ and $p$, such that
\[
\|u\|_{1,p;\G} \leq C \|f\|_{(\WW^{1,p^*}_\#(\G))'}.
\]
Furthermore, if $\G$ is of class $C^{m+1,1}$ for some nonnegative integer $m$ and $f \in \WW^{m,p}_\#(\G)$, then the solution $u \in \WW^{1,p}_\#(\G)$ of \eqref{eq:lb-weak} is in fact in $\WW^{m+2, p}_\#(\G)$
and satisfies the PDE in strong form: $ -\DeltaG u = f$ a.e. on $\G$.
Moreover, there is $C = C(\G, p) > 0$ such that
\[
\|u\|_{m+2,p;\G} \leq C \|f\|_{m,p;\G}.
\]
\end{thm}
The next corollary will be of use when analyzing the Bochner--Laplace problem \eqref{eq:manifold-BochnerPoisson}.
\begin{corollary}\label{cor:LB-W1p-special-RHS}
Let $p \in (1,\infty)$ and $q \in (1,d)$, and define $s := \min\{p,\frac{dq}{d-q}\}$.
Assume $\G$ is of class $C^1$ or of class $C^{0,1}$ if $p \in (2-\varepsilon,2+\varepsilon)$ for a certain $\varepsilon>0$ uniquely determined by $\G$.
Then for each $\bf \in \bL^p_t(\G)$ and $f \in \LL^q_\#(\G)$, there is a unique $u \in \WW^{1,s}_\#(\G)$ such that
\begin{equation*}
\int_\G \nablaG u \cdot \nablaG v = - \int_\G \bf \cdot \nablaG v + \int_\G f \, v, \qquad \forall v \in \WW^{1,s^*}_\#(\G).
\end{equation*}
Moreover, there exists a positive constant $C$, depending only on $\G$, $p$ and $q$ such that
\begin{equation*}
\|u\|_{1,s;\G} \leq C \left( \|\bf\|_{0,p;\G} + \|f\|_{0,q;\G} \right)
\end{equation*}
\end{corollary}
We close this section with the discussion of ``ultra-weak" solutions to \eqref{eq:Laplace--Beltrami-Poisson}, which will be utilized in the analyses of \eqref{eq:manifold-BochnerPoisson} and \eqref{eq:manifold-stokes}.
\begin{lemma}[ultra-weak solutions of the Laplace--Beltrami operator on $\G$]\label{lem:ultraweak-allp-LB}
Let $p \in (1,\infty)$ and assume $\G$ is of class $C^{1,1}$.
Then, for each $f \in (\WW^{2,p^*}_\#(\G))'$ there exists a unique $u \in \LL^p_\#(\G)$ such that
\begin{equation}\label{eq:dual-problem}
-\int_\G u \, \DeltaG v = \langle f, v \rangle, \qquad \forall v \in \WW^{2,p^*}_\#(\G).
\end{equation}
Moreover, there exists a constant $C>0$ depending only on $\G$ and $p$ such that
\begin{equation*}
\|u\|_{0,p;\G} \leq C \|f\|_{(\WW^{2,p^*}_\#(\G))'}.
\end{equation*}
\end{lemma}

\section{The Bochner--Laplace problem} \label{sec:bl}
As was stated in the introduction, the nonlinear nature of the tangent Navier--Stokes equations necessitates studying the tangent Stokes equations in the $\LL^p$-based setting. 
It turns out that to develop the $\LL^p$-based theory for the latter, it suffices to have the $\LL^p$-based theory for the Laplace--Beltrami and the Bochner--Laplace problems at hand. 
We illustrate this idea by the formal calculation below.

Consider the weak formulation of the tangent Stokes problem with $p \in (1,\infty)$: given $(\bf, g) \in (\bW^{1,p^*}_t(\G))' \times \LL^p_\#(\G)$, find $(\bu, \uppi) \in \bW^{1,p}_t(\G) \times \LL^p_\#(\G)$ such that
\begin{equation}\label{motivation:stokes}
\begin{aligned}
\int_\G \nablaG \bu : \nablaG \bv
- \int_\G \uppi \divG \bv & = \langle \bf , \bv \rangle
, \quad & \forall\, \bv \in \bW^{1,p^*}_t(\G),\\
- \int_\G q \divG \bu & = -\int_\G g \, q, \quad & \forall\, q \in \LL_\#^{p^*}(\G).
\end{aligned}
\end{equation}
\emph{Suppose that it is well-posed with solution $(\bu, \uppi)$ depending continuously on the data $(\bf, g)$.} Now, suppose that $(\bf, g) \in \bL^p_t(\G) \times \WW^{1,p}_\#(\G)$. By testing the momentum equation with $\bv = \nablaG \psi$ for some $\psi \in \WW^{2,p^*}_\#(\G)$, we obtain
\[
\int_\G \nablaG \bu : \nablaG \nablaG \psi - \int_\G \uppi \, \DeltaG \psi = \int_\G \bf \cdot \nablaG \psi.
\]
\emph{Using the symmetry of the covariant Hessian (cf.~\eqref{symmetry-cov-Hes}) $\nablaG \nablaG \psi$ and $\divG \bu = g$}, in conjunction with \eqref{exploiting-symmetry-covarianthessian}, one can rewrite the above as
\[
- \int_\G \uppi\,\DeltaG \psi = \int_\G (\bf + \nablaG g + (\tr(\bB)\bB - \bB^2) \bu) \cdot \nablaG \psi, \qquad \forall \psi \in \WW^{2,p^*}_\#(\G).
\]
\emph{If we had the $\LL^p$-based regularity and well-posedness in $\WW^{1,p}(\G)$ for the Laplace--Beltrami problem for $\G$}, then we could infer that
\begin{equation*}
\|\uppi\|_{1,p;\G} \leq C(\G, p) (\|\bf\|_{0,p;\G} + \|g\|_{1,p;\G}),
\end{equation*}
where we have used the well-posedness of problem \eqref{motivation:stokes} to bound $\|\bu\|_{0,p;\G}$ by data. 
With this at hand, we can now go back to the momentum equation to obtain a Bochner--Laplace equation for the velocity:
\[
\int_\G \nablaG \bu : \nablaG \bv = \int_\G (\bf - \nablaG \uppi) \cdot \bv, \quad \forall \bv \in \bW^{1,p^*}_t(\G).
\]
\emph{If we had the $\bW^{2,p}(\G)$-regularity for the Bochner--Laplace problem}, we could conclude
\[
\|\bu\|_{2,p;\G} \leq C(\G, p) (\|\bf\|_{0,p;\G} + \|g\|_{1,p;\G}).
\]
This formal calculation, inspired by \cite[Lemma 2.1]{ORZ2021}, shows how to decouple the equations for pressure and velocity in the tangent Stokes system \eqref{eq:manifold-stokes} in order to first derive $\WW^{1,p}(\G)$-regularity for pressure and next to deduce $\bW^{2,p}(\G)$-regularity for velocity.
Circumventing the saddle-point structure of \eqref{eq:manifold-stokes} is one of the most significant contributions of this paper.
We thus claim that to study \eqref{eq:manifold-stokes}, it is enough to study the Laplace--Beltrami \eqref{eq:Laplace--Beltrami-Poisson} and the Bochner--Laplace \eqref{eq:manifold-BochnerPoisson} problems. 
This motivates the $\LL^p$-based theory developed in this section. 
Of course, all of these calculations will be made rigorous in the current and the forthcoming sections.

\subsection{\texorpdfstring{$\LL^p$}{Lᵖ}-based Sobolev regularity}\label{sec:Lp-regularity-BochnerLaplacian}

Recall that the Bochner--Laplace problem formally reads:
find a velocity field $\bu$ everywhere tangential to $\G$ (i.e.~$\bu \cdot \bnu = 0$ on $\G$) such that
\begin{equation}
-\bP \divG \nablaG \bu = \bf \quad \text{on $\G$}, \tag{\ref{eq:manifold-BochnerPoisson}}
\end{equation}
where $\bf$ is a given everywhere tangential force and $\bP := \bI - \bnu \otimes \bnu$ is the projection operator onto the tangent plane to $\G$.
We stress that the well-posedness and regularity results for the Laplace--Beltrami problem from the previous section do not immediately translate to \eqref{eq:manifold-BochnerPoisson} due to the fact that $\DeltaB = \bP \divG \nablaG = \bP \divM (\bP \nablaM)$ does not coincide with the component-wise application of the Laplace--Beltrami operator $\divM \nablaM$.
In this section we are concerned with the well-posedness of the $\LL^p$-based ($1<p<\infty$) weak formulation for \eqref{eq:manifold-BochnerPoisson}:
find $\bu \in \bW^{1,p}_t(\G)$ such that
\begin{equation}\label{eq:weak-manifold-BochnerPoisson-nonhilbertian}
\int_\G \nablaG \bu : \nablaG \bv = \langle \bf , \bv \rangle, \quad \forall\, \bv \in \bW^{1,p^*}_t(\G).
\end{equation}
We start by showing that replacing $\nablaM$ by the (in principle) weaker $\nablaG = \bP \nablaM$ yields an equivalent norm in $\bW^{1,p}_t(\G)$ for $p \in (1,\infty)$ (and in particular, in $\bH^1_t(\G) = \bW^{1,2}_t(\G)$).

\begin{proposition}[norm equivalence]\label{prop:equivalence-of-norms}
Let $p \in (1,\infty)$ and assume $\G$ is of class $C^2$.
Then,
\begin{equation}\label{eq:equivalence-of-norms}
\|\nablaG \bv\|_{0,p;\G} + \|\bv\|_{0,p;\G} \geq C\|\bv\|_{1,p;\G}, \qquad \forall \bv \in \bW^{1,p}_t(\G),
\end{equation}
where the constant $C$ depends only on $p$ and linearly on $\|\bB\|_{0;\infty;\G}$.
\begin{proof}
The proof is a simplified version of \cite[Lemma~4.1]{JankuhnOlshanskiiReusken2018}.
From \eqref{ExtCov-for-tangent} and \autoref{lem:density-tangential} (density) we know that
\begin{equation*}
\nablaM \bv = \nablaG \bv - \bnu \bv^T \bB, \qquad \forall \bv \in \bW^{1,p}_t(\G).
\end{equation*}
This, together with the continuity of $\bB = \nablaM \bnu$ on $\G$, yields \eqref{eq:equivalence-of-norms}.
\end{proof}
\end{proposition}

We will show that the lower order term $\|\bv\|_{0,p;\G}$ can actually be removed in the left-hand side of \eqref{eq:equivalence-of-norms}.
This hinges on the injectivity of the operator $\nablaG$ on $\bW^{1,p}_t(\G)$, which has been previously stated in \cite[Lemma 2.1]{HansboLarsonLarsson2020} for $p=2$ and a $C^\infty$ surface embedded in $\RR^3$ (i.e.~$d=2$).

The proof in \cite[Lemma 2.1]{HansboLarsonLarsson2020} relies on subtle differential geometry manipulations to first prove the injectivity of $\nablaG$ on $\bC^\infty_t(\G)$;
then the proof asserts, without further justification, that said injectivity is transferred via density to $\bH^1_t(\G)$.
This, however, is not straightforward and it does not follow from abstract functional analysis;
there are non-injective linear continuous operators $T: X \rightarrow Y$ between two Hilbert spaces such that the restriction of $T$ to a dense subset of $X$ is injective (e.g.~\cite{MathStack-counterexample-injective-1}).
In the following lemma, we fill in this theoretical gap (see Step 1 within the proof below) and extend the result for less-regular manifolds of dimension $d \geq 2$ and $p \in (1,\infty)$.
Moreover, we note that the ``weak'' Riemann curvature tensor (cf.~\eqref{weak-R-definition}) is instrumental to the proof.

\begin{lemma}[injectivity of $\nablaG$]\label{lem:covariant-gradient-injective}
Let $p \in (1,\infty)$ and assume $\G$ is of class $C^2$.
Then, $\nablaG$ is injective on $\bW^{1,p}_t(\G)$.

\begin{proof}

\textbf{Step 1:}
We start by showing that if $\bv \in \bW^{1,p}_t(\G)$ is such that $\nablaG \bv = \textbf{0}$ a.e.~on $\G$, then $\bv \in \bC^1_t(\G)$.
Let $\bv$ as above.
Then, by \eqref{ExtCov-for-tangent} we know that 
\begin{equation}\label{ExtCov-for-tangent&nullcov}
\nablaM \bv = -\bnu \bv^T \bB.
\end{equation}
If $\bv \in \bW^{1,p}_t(\G)$, then by \autoref{prop:GNG-embeddings} (Gagliardo--Nirenberg--Sobolev inequality), it transpires that $\bv \in \bL^{p_0}_t(\G)$ for $p_0 = \frac{pd}{d-p(n+1)}$.
Notice that the $C^2$-regularity of $\G$ implies that $\bnu$ and $\bB$ are respectively $C^1$ and $C^0$ functions and so from \eqref{ExtCov-for-tangent&nullcov} we deduce that $\bv \in \bL^{p_0}(\G)$  implies $\nablaM \bv \in \bbL^{p_0}(\G)$.
Iterating this process leads to sufficiently high integrability of $\bv$ so as to use \autoref{prop:Morrey-inequality} (Sobolev embedding in H\"older spaces) to obtain $\bv \in \bC^0_t(\G)$, and so, again, by \eqref{ExtCov-for-tangent&nullcov}, $\bv \in \bC^1_t(\G)$.
This completes the proof of Step 1.
\\ \\
\textbf{Step 2:}
We claim that:
if $\bv \in \bC^1_t(\G)$ is such that $\nablaG \bv = \textbf{0}$ on $\G$, then $\bv = \textbf{0}$ on a non-empty open set (relative to $\G$).

Fix $\bx \in \G$.
Let $\{\bt_i\}_{i=1}^d$ be a $C^1$ orthonormal basis of the tangent plane near $\bx$;
such basis can be obtained by applying the Gram--Schmidt method to the columns of $(\nabla \wt\bchi) \circ \wt\bchi^{-1}$, where $\wt\bchi$ is a local parametrization of $\G$ near $\bx$.
Denote by $\bT \in \RR^{(d+1)\times d}$ the matrix field whose $i$-th column is $\bt_i$.
Notice that we cannot select $\{\bt_i\}_{i=1}^d$ to be the eigenframe of $\bB$, as such eigenframe may be discontinuous at $\bx$ if not all of the principal curvatures are simple \cite{Matthew2014}.
From our purposes, we circumvent this technicality as follows:
recall that the action of $\bB$ on the tangent plane of $\G$ can be represented by the $d\times d$ tensor $\bA := \bT^T \bB \bT$.
Let $\{\bp_i\}_{i=1}^d \subseteq \RR^d$ be the eigenframe of $\wt\bA$ at $\by := \wt\bchi^{-1}(\bx)$;
i.e.~$\wt\bA \bp_i  \big\rvert_\by= \wt\kappa_i \bp_i \big\rvert_\by$, where $\{\kappa_i\}_{i=1}^d$ are the principal curvatures of $\G$.
Moreover, since $\bT \bT^T = \bI - \bnu \otimes \bnu$ is the projection onto the tangent plane of $\G$, we have that $\wt\bB (\wt\bT \bp_i) \big\rvert_\by = \wt\bT (\wt\bT^T \wt\bB \wt\bT) \bp_i \big\rvert_\by = \wt\kappa_i (\wt\bT \bp_i) \big\rvert_\by$.
Hence, $\{\br_i := \bT \bp_i \}_{i=1}^d$ is also a $C^1$ orthonormal basis of the tangent plane near $\bx$ and $\bB \br_i \big\rvert_\bx = \kappa_i \br_i \big\rvert_\bx$ for each $i \in \{1,\dotsc,d\}$.

Now, let $\bv \in \bC^1_t(\G)$ be such that $\nablaG \bv = \textbf{0}$ on $\G$.
From \eqref{weak-R-definition} and \eqref{weak-RCT-simplified} (Riemann Curvature Tensor) and a localization argument, we deduce that for each $\ba,\bb \in \bC^1_t(\G)$ it holds that
\begin{equation}\label{eq:RCT-zero}
(\bv^T \bB \bb) \bB \ba - (\bv^T \bB \ba) \bB \bb = 0, \qquad \text{on $\G$}.
\end{equation}
By expressing $\bv$ locally near $\bx$ by $\bv = \sum_{k=1}^d \alpha_k \br_k$, it follows from \eqref{eq:RCT-zero} that for each $i,j = 1,\dotsc,d$:
\begin{equation*}
0
= \left[(\bv^T \bB \br_j) \bB \br_i - (\bv^T \bB \br_i) \bB \br_j\right]\big\rvert_{\bx}
= \left[\kappa_i \kappa_j \left((\bv \cdot \br_j) \br_i - (\bv \cdot \br_i) \br_j\right)\right]\big\rvert_{\bx}
= \left[\kappa_i \kappa_j \left( \alpha_j \br_i - \alpha_i \br_j\right)\right]\big\rvert_{\bx};
\end{equation*}
hence,
\begin{equation}\label{prod-eig-on}
\alpha_j(\bx) \kappa_i(\bx) \kappa_j(\bx) = 0, \qquad \forall i,j=1,\dotsc,d.
\end{equation}
Since $\G$ is an $d$-dimensional compact manifold without boundary embedded in $\RR^{d+1}$, we have by \cite[Theorem 4, p.88]{Thorpe1994} (still valid for $C^2$ manifolds \cite{Shiffrin2024private}) and the continuity of the Gauss curvature $K := \prod_{i=1}^d \kappa_i$ on $\G$, that the set $\{\bz \in \G: K(\bz) \neq 0\} = \{\bz \in \G: \kappa_i(\bz) \neq 0, \quad \forall\, 1,\dotsc,d\}$ is a non-empty open set (relative to $\G$).
For each point $\bx$ in that set, identity \eqref{prod-eig-on} yields $\alpha_j(\bx) = 0$ for each $j=1,\dotsc,d$, whence $\bv(\bx) = \sum_{k=1}^d \alpha_k(\bx) \br_k(\bx) = \textbf{0}$.
\\ \\
\textbf{Step 3:}
Let $\bv \in \bC^1_t(\G)$ be such that $\nablaG \bv = \textbf{0}$ on $\G$ and there is $\bx \in \G$ such that $\bv(\bx) = \textbf{0}$.
We now aim to prove that $\bv(\overline{\bx}) = \textbf{0}$ for each $\overline{\bx} \in \G$.

Given that $\G$ is not necessarily smooth, but only of class $C^2$ and connected, the existence of a geodesic on $\G$ connecting $\bx$ and $\overline{\bx}$ does not follow from the classical Hopf--Rinow theorem from Riemannian geometry \cite[Chapter 7, Theorem 2.8]{doCarmo1992}.
However, by \cite[Chapter I, Proposition 3.18]{BridsonHaefliger1999} (which holds for manifolds of class $C^1$), we deduce that $(\G,d)$ is a \emph{length space} where $d$ is the metric on $\G$ defined as follows:
for each $\bz,\bw \in \G$, let $d(\bz,\bw)$ be the infimum of $\int_0^1 |\dot\bgamma(t)| \dd t$ over all piecewise-$C^1$ paths $\bgamma: [0,1] \rightarrow \G$ such that $\bgamma(0) = \bz$ and $\bgamma(1) = \bw$.
Thus, by the Hopf--Rinow theorem in length spaces \cite[Chapter I, Proposition 3.7]{BridsonHaefliger1999}, \cite[Theorem 2.5.28]{BuragoBuragoIvanov2001}, we deduce that $(\G,d)$ is \emph{geodesically complete}; i.e.~the infimum defining $d$ is realizable by a (in principle) piecewise-$C^1$ path $\bgamma$ called \emph{geodesic}.
Upon reparametrizing $\bgamma$ by its arc-length (so it has constant speed), we deduce from \cite[Proposition 1.1]{Wang2013-lecturenotes} and \cite[Corollary 2.7]{Wang2013-lecturenotes} that $\bgamma$ is of class $C^1$.

The rest of the proof follows almost verbatim to \cite[Lemma 2.1]{HansboLarsonLarsson2020}, which we describe below for the sake of completeness.
Let $\bgamma: [0,L] \rightarrow \G$ be the $C^1$ geodesic connecting $\bx$ and $\overline{\bx}$, i.e.~$\bgamma(0) = \bx$ and $\bgamma(L) = \overline{\bx}$.
In what follows, for every (scalar, vector or tensor-valued) function $\bf$ defined on $\bgamma$ we write $\bf^* := \bf \circ \bgamma$. 
Let $\dot\bgamma := \frac{\dd \bgamma}{\dd t}$ be the tangent vector to $\bgamma$.
By the chain rule $\frac{\dd \bv^*}{\dd t} = (\nablaM \bv)^* \dot\bgamma$,
whence $\bP^*\frac{\dd \bv^*}{\dd t} = (\nablaG\bv)^*\dot\bgamma = \mathbf{0}$, because $(\nablaG \bv)^* = \mathbf{0}$.
Consequently, the tangent vector $\frac{\dd \bv^*}{\dd t} = \bP \frac{\dd \bv^*}{\dd t}$ vanishes and
\begin{equation*}
\frac{1}{2}\frac{\dd}{\dd t}|\bv^*|^2
= \frac{\dd \bv^*}{\dd t} \cdot \bv^*
= 0.
\end{equation*}
Hence, $|\bv^*(t)| = |\bv^*(0)| = |\bv(\bx)| = 0$ for each $t \in [0,L]$;
in particular, $\bv(\overline{\bx}) = \bv^*(L) = 0$.
\end{proof}
\end{lemma}
In the proof above, we needed $\G$ to be of class $C^2$.
This requirement shall propagate to all well-posedness and regularity results in this paper, even in the $\LL^2$-based setting.
It would be interesting to determine if such requirement can be relaxed to $\G$ of class $C^{1,1}$.
We leave this question open for future research.

We are in position to prove a Poincar\'e-type inequality in $\bW^{1,p}_t(\G)$.
\begin{thm}[Poincaré inequality]\label{lem:Poincare}
Let $p \in (1,\infty)$ and assume $\G$ is of class $C^2$.
Then, there is a constant $C > 0$ depending only on $\G$ and $p$ such that
\begin{equation}\label{eq:Poincare}
\|\nablaG \bv\|_{0,p;\G}  \geq C \|\bv\|_{1,p;\G}, \qquad \forall\, \bv \in \bW^{1,p}_t(\G).
\end{equation}
\begin{proof}
This is a consequence of the Peetre--Tartar's Lemma \cite[Lemma~A.38]{ErnGuermond2004} applied to operators $A:= \nablaG \in \sL(\bW^{1,p}_t(\G), \bbL^p(\G))$ and $T:= \textrm{id}:\bW^{1,p}_t(\G) \rightarrow \bL^p(\G)$---which are respectively injective and compact by \autoref{lem:covariant-gradient-injective} and \autoref{rem:W1p-compactly-embedded-in-L^p}---and inequality \eqref{eq:equivalence-of-norms}.
\end{proof}
\end{thm}
A direct application of the Lax--Milgram theorem \cite[Theorem 1.1]{Gatica2014} yields the well-posedness of \eqref{eq:weak-manifold-BochnerPoisson} for $p=2$.
\begin{thm}[well-posedness of the Bochner Laplacian in $\bH^1_t(\G)$]\label{thm:eq:weak-manifold-BochnerPoisson-H1-wellposedness}
Assume $\G$ is of class $C^2$.
For each $\bf \in (\bH^1_t(\G))'$, there exists a unique solution $\bu \in \bH^1_t(\G)$ of \eqref{eq:weak-manifold-BochnerPoisson-nonhilbertian} for $p=2$.
Moreover, there is a constant $C > 0$, depending only on $\G$, such that
\begin{equation}\label{L2based-BochnerLaplacian-apriori}
\|\bu\|_{1,\G} \leq C \|\bf \|_{(\bH^1_t(\G))'}.
\end{equation}
\end{thm}

With the well-posedness of \eqref{eq:weak-manifold-BochnerPoisson} for $p=2$ at hand, we now proceed to establish its well-posedness for any $p \in (1,\infty)$.
In striking contrast to the development of the scalar elliptic theory in \cite{BenavidesNochettoShakipov2025-a},
the arguments presented in this section do not require going through ``localization".
Instead, we exploit the nontrivial relation between the Bochner--Laplace and componentwise Laplace--Beltrami operators induced by the identities in \autoref{prop:weak_BL-LB-relation} in order to keep all arguments ``intrinsic", that is, on $\G$.

Before establishing the well-posedness of problem \eqref{eq:weak-manifold-BochnerPoisson-nonhilbertian}, we point out the following vector-valued analogue of \autoref{prop:surj-div} (surjectivity of the divergence operator), which is an immediate consequence of i) of \cite[Theorem~A.1]{BenavidesNochettoShakipov2025-a} (characterization of injectivity and surjectivity), the duality identification $(\bbL^p_t(\G))' \equiv \bbL^{p^*}_t(\G)$, and \autoref{lem:Poincare} (Poincaré inequality).
\begin{proposition}[surjectivity of the divergence operator]\label{prop:surj-div-vectorfields}
Let $p \in (1,\infty)$ and assume $\G$ is of class $C^2$.
Let
\begin{equation*}
\bbL^p_t(\G) := \{\bA \in \bbL^p(\G): \quad \bP \bA \bP = \bA, \quad \text{on $\G$}\},
\end{equation*}
which is a closed subspace of $\bbL^p(\G)$.
Then, the linear and continuous operator $\divG : \bbL^p_t(\G) \rightarrow (\bW^{1,p^*}_t(\G))'$ given by
\begin{equation*}
\langle \divG \bA, \bv \rangle = -\int_\G \bA : \nablaG \bv, \qquad \forall \bA \in \bbL^p_t(\G), \ \forall \bv \in \bW^{1,p^*}_t(\G),
\end{equation*}
is surjective.
\end{proposition}
\begin{thm}[well-posedness of the Bochner Laplacian in $\bW^{1,p}_t(\G)$]\label{thm:eq:weak-manifold-BochnerPoisson-nonhilbertian-wellposedness}
Let $p \in (1,\infty)$ and assume $\G$ is of class $C^2$.
Then, for each $\bf \in (\bW^{1,p^*}_t(\G))'$ there exists a unique solution $\bu \in \bW^{1,p}_t(\G)$ of \eqref{eq:weak-manifold-BochnerPoisson-nonhilbertian}.
Moreover, there exists a constant $C>0$, depending only on $\G$ and $p$, such that
\begin{equation}\label{Lpbased-BochnerLaplacian-apriori}
\|\bu\|_{1,p;\G} \leq C \|\bf\|_{(\bW^{1,p^*}_t(\G))'}.
\end{equation}
\begin{proof}
We first argue for $p \in (2,\infty)$.
Since $\bf \in (\bW^{1,p^*}_t(\G))' \subseteq (\bH^1_t(\G))'$, we know from \autoref{thm:eq:weak-manifold-BochnerPoisson-H1-wellposedness} that there exists a unique $\bu \in \bH^1_t(\G)$ such that
\begin{equation}\label{H1-VL-rhs-better}
\int_\G \nablaG \bu : \nablaG \bv = \langle \bf , \bv \rangle, \qquad \forall \bv \in \bH^1_t(\G).
\end{equation}
Because of $\bf \in (\bW^{1,p^*}_t(\G))'$, we know from \autoref{prop:surj-div-vectorfields} (surjectivity of the divergence operator) and \cite[Remark~A.3]{BenavidesNochettoShakipov2025-a}
that exists $\bA_\bf \in \bbL^p_t(\G)$ such that $\langle \bf,\bv \rangle = -\int_\G \bA_\bf : \nablaG \bv$, for each $\bv \in \bW^{1,p^*}_t(\G)$, and 
\begin{equation}\label{inv-div_vector-bound}
\|\bA_\bf\|_{0,p;\G} \lesssim \|\bf\|_{(\bW^{1,p^*}_t(\G))'}.  
\end{equation}
Consequently, we can equivalently state \eqref{H1-VL-rhs-better} as
\begin{equation*}
\int_\G \nablaG \bu : \nablaG (\bP\bv) = - \int_\G \bA_\bf : \nablaG (\bP\bv), \quad \forall\, \bv \in \bH^1(\G),
\end{equation*}
or, by means of \autoref{prop:weak_BL-LB-relation} and a density argument, also as
\begin{equation*}
\int_\G \nablaM \bu : \nablaM \bv = \int_\G \bh \cdot \bv + \int_\G \bH : \nablaM \bv, \qquad \forall \bv \in \bH^1(\G);
\end{equation*}
where $\bh := ((\bA_\bf + \nablaM \bu) : \bB) \bnu$ and $\bH := - (\bA_\bf + \bnu \bu^T \bB)$.
In particular, choosing $\bv = \alpha \be_k$ for fixed $k=1,\dotsc,d+1$ an arbitrary $\alpha \in \HH^1(\G)$ yields
\begin{equation*}
\int_\G \nablaG u_k : \nablaG \alpha = \int_\G h_k \, \alpha + \int_\G \bH_k \cdot \nablaG \alpha, \qquad \forall \alpha \in \HH^1(\G),
\end{equation*}
where $\bH_k$ denotes the $k$-th row of $\bH$.
That is, $\overline u_k := u_k - \frac{1}{|\G|} \int_\G u_k \in \HH^1_\#(\G)$ satisfies \eqref{eq:lb-weak}
with right-hand side datum $f_k \in (\HH^1_\#(\G))'$ given by $\langle f_k,\alpha \rangle := \int_\G h_k \, \alpha + \int_\G \bH_k \cdot \nablaG \alpha$.
Since $\bB$ and $\bnu$ are respectively of class $C^0$ and $C^1$ and $\bA_\bf \in \bbL^p(\G)$, and $\bh$ depends affinely on $\nablaM \bu$ and $\bH$ linearly on $\bu$, we can proceed via a bootstrapping argument based upon Sobolev embeddings (cf.~\autoref{sec:difgeo-Sobspace}) and \autoref{cor:LB-W1p-special-RHS} to deduce that for each $k \in \{1,\dotsc,d+1\}$, $u_k \in \WW^{1,p}(\G)$ and 
\begin{equation*}
\|u_k\|_{1,p;\G}
\lesssim \|\bu\|_{1,\G} + \|\bA_\bf\|_{0,p;\G}
\stackrel{\eqref{L2based-BochnerLaplacian-apriori},\eqref{inv-div_vector-bound}}{\lesssim} \|\bf \|_{(\bH^1_t(\G))'} +  \|\bf\|_{(\bW^{1,p^*}_t(\G))'}
\lesssim \|\bf\|_{(\bW^{1,p^*}_t(\G))'},
\end{equation*}
and so $\bu \in \bW^{1,p}_t(\G)$ with the same bound.
Furthermore, by a density argument it is easy to see that $\bu$ satisfies \eqref{eq:weak-manifold-BochnerPoisson-nonhilbertian}.
This finishes the proof for $p \in (2,\infty)$.

Akin to Step 3 in the proof of \cite[Theorem 1.1]{BenavidesNochettoShakipov2025-a}, the proof for $p\in(1,2)$ follows by duality, noting that the operator induced by the left-hand side of \eqref{H1-VL-rhs-better} is formally self-adjoint.
\end{proof}
\end{thm}
We now prove the higher $\LL^p$-based regularity of solutions to \eqref{eq:weak-manifold-BochnerPoisson-nonhilbertian}.

\begin{thm}[$\bW^{m+2,p}$-regularity of the Bochner Laplacian on $\G$]\label{thm:allp-based-regularity-BochnerLaplacian}
Let $p \in (1,\infty)$ and assume $\G$ is of class $C^{m+2,1}$ for some nonnegative integer $m$.
If $\bf \in \bW^{m,p}_t(\G)$, then the solution $\bu \in \bW^{1,p}_t(\G)$ of \eqref{eq:weak-manifold-BochnerPoisson-nonhilbertian} belongs to the space $\bW^{m+2,p}_t(\G)$ and there exists a constant $C>0$, depending only on $\G$, $m$ and $p$, such that
\begin{equation}\label{eq:allp-Lpbased-BochnerLaplacian_higher-apriori}
\|\bu\|_{m+2,p;\G} \leq C \|\bf\|_{m,p;\G}.
\end{equation}
Moreover, $\bu$ strongly satisfies \eqref{eq:manifold-BochnerPoisson} in the sense that
\begin{equation}\label{eq:strong-manifold-BochnerPoisson}
-\bP \divG \nablaG \bu = \bf \quad \text{a.e.~on $\G$}.
\end{equation}
\begin{proof}
We proceed similarly to the proof of \autoref{thm:eq:weak-manifold-BochnerPoisson-nonhilbertian-wellposedness} above but without using \autoref{prop:surj-div-vectorfields}. The combination of \autoref{prop:weak_BL-LB-relation}, a density argument, and integration-by-parts formula \cite[Lemma B.7]{BouckNochettoYushutin2024} shows that $\bu$ satisfies $\int_\G \nablaM \bu : \nablaM \bv = \int_\G \bh \cdot \bv$ for all $\bv \in \bW^{1,p^*}_t(\G),$ where $\bh := \bf + \divG(\bnu \bu^T \bB) + (\nablaM \bu : \bB)\bnu \in \bL^p_t(\G)$. By choosing $\bv = \alpha \be_k$ for fixed $k \in \{1,\dotsc,d+1\}$ and arbitrary $\alpha \in \WW^{1,p}(\G)$, we have that
$\int_\G \nablaG u_k : \nablaG \alpha = \int_\G h_k \, \alpha$ for all $\alpha \in \WW^{1,p^*}(\G)$. Successive applications of the higher regularity result from \autoref{thm:lb} (Laplace--Beltrami) conclude the proof of \eqref{eq:allp-Lpbased-BochnerLaplacian_higher-apriori}. Finally, since $\bu \in \bW^{2,p}_t(\G)$, we can integrate \eqref{eq:weak-manifold-BochnerPoisson-nonhilbertian} by parts using \cite[Ap.~B]{BouckNochettoYushutin2024} (valid by density) to deduce \eqref{eq:strong-manifold-BochnerPoisson}. 
\end{proof}
\end{thm}
Akin to \autoref{lem:ultraweak-allp-LB} (ultra-weak solutions of the Laplace--Beltrami operator on $\G$), we finish this section by establishing the existence and uniqueness of ``ultra-weak'' solutions for the Bochner Laplacian.
\begin{lemma}[ultra-weak solutions of the Bochner Laplacian on $\G$]\label{lem:ultraweak-allp-VL}
Let $p \in (1,\infty)$ and assume $\G$ is of class $C^{2,1}$.
Then, for each $\bf \in (\bW^{2,p^*}_t(\G))'$ there exists a unique $\bu \in \bL^p_t(\G)$ such that
\begin{equation*}
-\int_\G \bu \cdot (\bP \divG \nablaG \bv) = \langle \bf, \bv \rangle, \qquad \forall \bv \in \bW^{2,p^*}_t(\G).
\end{equation*}
Moreover, there exists a constant $C>0$ depending only on $\G$ and $p$ such that
\begin{equation*}
\|\bu\|_{0,p;\G} \leq C \|\bf\|_{(\bW^{2,p^*}_t(\G))'}.
\end{equation*}
\begin{proof}
Since $-\DeltaB := -\bP \divG \nablaG$ is a linear continuous bijection from $\bW^{2,p^*}_t(\G)$ onto $\bL^{p^*}_t(\G) \equiv (\bL^{p}_t(\G))'$ (cf.~\autoref{thm:allp-based-regularity-BochnerLaplacian}), the proof follows by duality similarly to \autoref{lem:ultraweak-allp-LB} (cf.~\cite[Lemma 4.3]{BenavidesNochettoShakipov2025-a}).
\end{proof}
\end{lemma}

\section{Tangent Stokes equations} \label{sec:stokes}

In this section we apply the well-posedness and regularity theory developed in \autoref{sec:bl} for the \eqref{eq:manifold-BochnerPoisson} together with the Laplace--Beltrami theory from \cite{BenavidesNochettoShakipov2025-a} to derive a corresponding theory for the tangent Stokes equations \eqref{eq:manifold-stokes}.
We refer the reader to the introduction of \autoref{sec:bl} for the sketch of the arguments used in this section.
We are concerned with the well-posedness of a general variational formulation for the tangent Stokes equations \eqref{eq:manifold-stokes}.
Let $p \in (1,\infty)$.
For each $\bf \in (\bW^{1,p^*}_t(\G))'$ and $g \in \LL^p_\#(\G)$ consider the problem:
find $(\bu,\uppi) \in \bW^{1,p}_t(\G) \times \LL^p_\#(\G)$ such that
\begin{equation}\label{eq:weak-manifold-stokes-nonHilbert}
\begin{aligned}
\int_\G \nablaG \bu : \nablaG \bv
- \int_\G \uppi \divG \bv & = \langle \bf , \bv \rangle
, \quad & \forall\, \bv \in \bW^{1,p^*}_t(\G),\\
- \int_\G q \divG \bu & = -\int_\G g \, q, \quad & \forall\, q \in \LL_\#^{p^*}(\G).
\end{aligned}
\end{equation}
Problem \eqref{eq:weak-manifold-stokes-nonHilbert} is of the generalized Babu\v{s}ka--Brezzi-type (see \autoref{thm:gen-BB}), whose well-posedness hinges on a collection of inf-sup conditions, which we prove below.
Notice that, for $p=2$, \eqref{eq:weak-manifold-stokes-nonHilbert} reduces to \eqref{eq:weak-manifold-stokes}, for which the classical (and simpler) Babu\v{s}ka--Brezzi theory in Hilbert spaces \cite[Theorem 2.1]{Gatica2014} applies.

We start by establishing the following inf-sup condition that encodes, for arbitrary $p \in (1,\infty)$, the surjectivity of the covariant divergence $\divG$ as an operator acting from $\bW^{1,p^*}_t(\G)$ onto $(\LL^p_\#(\G))' \equiv \LL^{p^*}_\#(\G)$;
compare with \autoref{prop:surj-div} (surjectivity of the divergence operator).

\begin{proposition}[inf-sup condition for $\divG: \bW^{1,p^*}_t(\G) \to \LL^{p*}_\#(\G)$]\label{prop:inf-sup-divergence-nonHilbert}
Let $p \in (1,\infty)$ and assume $\G$ is of class $C^{1,1}$.
Then, there is a constant $C > 0$, depending on $\G$ and $p$, such that
\begin{equation}\label{eq:inf-sup-divergence-nonHilbert}
\sup_{\substack{\bv \in \bW^{1,p^*}_t(\G)\\\bv \neq \mathbf{0}}} \frac{ \left| \int_\G \uppi \divG \bv \right| }{\|\bv\|_{1,p^*;\G}} \geq c \|\uppi\|_{0,p;\G}, \qquad \forall \uppi \in \LL_\#^p(\G).
\end{equation}
\begin{proof}
Fix $\uppi \in \LL_\#^p(\G)$.
By the higher regularity result within \autoref{thm:lb} (Laplace--Beltrami) we know there exists a unique $\phi \in \WW^{2,p^*}_\#(\G)$ such that $\DeltaG \phi = |\uppi|^{p-2} \uppi - \alpha_\uppi$ a.e.~on $\G$, where $\alpha_\pi := \frac{1}{|\G|} \int_\G |\uppi|^{p-2} \uppi$.
Furthermore, $\|\phi\|_{\WW^{2,p^*}(\G)} \lesssim \||\uppi|^{p-2} \uppi - \alpha_\uppi\|_{0,p^*;\G} \lesssim \|\uppi\|_{0,p;\G}^{p-1}$.
We use the function $\bw := \nablaG \phi \in \bW^{1,p^*}_t(\G)$, that satisfies $\divG \bw = |\uppi|^{p-2} \uppi - \alpha_\uppi$ on $\G$ and $\|\bw\|_{1,p^*;\G} \lesssim \|\phi\|_{\WW^{2,p^*}(\G)} \lesssim \|\uppi\|_{0,p;\G}^{p-1}$, to bound the supremum from below.
This yields
\begin{equation*}
\sup_{\substack{\bv \in \bW^{1,p^*}_t(\G)\\\bv \neq \mathbf{0}}} \frac{ \left| \int_\G \uppi \divG \bv \right| }{\|\bv\|_{1,p^*;\G}}
\geq \frac{ \int_\G \uppi \divG \bw }{\|\bw\|_{1,p^*;\G}}
\gtrsim \frac{\|\uppi\|_{0,p;\G}^p - \alpha_\uppi \int_\G \pi}{\|\uppi\|_{0,p;\G}^{p-1}}
= \|\uppi\|_{0,p;\G},
\end{equation*}
which finishes the proof.
\end{proof}
\end{proposition}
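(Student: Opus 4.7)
The plan is to prove the inf-sup condition by constructing, for each $\uppi \in \LL^p_\#(\G)$, an explicit test function $\bv \in \bW^{1,p^*}_t(\G)$ that realizes a good ratio. This is the natural $\LL^p$-based generalization of the Hilbertian argument in \autoref{lem:inf-sup}. The construction is driven by the duality identity $\|\uppi\|_{0,p;\G}^p = \int_\G \uppi \,(|\uppi|^{p-2}\uppi)$, which suggests engineering $\bv$ so that $\divG \bv$ is (up to a mean-zero correction) equal to $|\uppi|^{p-2}\uppi$.

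Concretely, I would set $c_\uppi := \frac{1}{|\G|}\int_\G |\uppi|^{p-2}\uppi$ so that $g := |\uppi|^{p-2}\uppi - c_\uppi$ belongs to $\LL^{p^*}_\#(\G)$, with $\|g\|_{0,p^*;\G} \lesssim \|\uppi\|_{0,p;\G}^{p-1}$. Invoking the $\WW^{2,p^*}$-regularity theory for the Laplace--Beltrami operator (\autoref{lem:L2-based-regularity-LaplaceBeltrami}, \autoref{lem:Lpg2-based-regularity-LaplaceBeltrami}, \autoref{lem:Lpl2-based-regularity-LaplaceBeltrami}), whose $C^{m+2}$-assumption for $m=0$ matches the $C^2$-hypothesis at hand, I would obtain a unique $\phi \in \WW^{2,p^*}_\#(\G)$ satisfying $-\DeltaG \phi = g$ a.e.\ on $\G$ with $\|\phi\|_{2,p^*;\G} \lesssim \|\uppi\|_{0,p;\G}^{p-1}$.

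Next, I would take $\bv := \nablaG \phi$, which lies automatically in $\bW^{1,p^*}_t(\G)$ (since $\nablaG \phi = \bP \nabla_M \phi$ is tangential) and satisfies $\|\bv\|_{1,p^*;\G} \leq \|\phi\|_{2,p^*;\G} \lesssim \|\uppi\|_{0,p;\G}^{p-1}$ together with $\divG \bv = \DeltaG \phi = -g$. Testing with this $\bv$, and using $\uppi \in \LL^p_\#(\G)$ to eliminate the constant term, I would compute
\[
\int_\G \uppi \divG \bv \;=\; -\int_\G \uppi\,(|\uppi|^{p-2}\uppi - c_\uppi) \;=\; -\|\uppi\|_{0,p;\G}^p + c_\uppi \int_\G \uppi \;=\; -\|\uppi\|_{0,p;\G}^p,
\]
and then bound the supremum from below by this ratio to conclude
\[
\sup_{\bv \neq \mathbf{0}} \frac{|\int_\G \uppi \divG \bv|}{\|\bv\|_{1,p^*;\G}} \;\geq\; \frac{\|\uppi\|_{0,p;\G}^p}{C\|\uppi\|_{0,p;\G}^{p-1}} \;=\; c\,\|\uppi\|_{0,p;\G}.
\]

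There is no real obstacle here beyond correctly citing the previously established $\WW^{2,p^*}$-regularity of Laplace--Beltrami; the whole argument is a clean transcription of the standard ``solve a Poisson problem with the right-hand side $|\uppi|^{p-2}\uppi$'' trick, made possible by the fact that the gradient of a scalar field on $\G$ is automatically tangential and by the Sobolev regularity developed earlier in \autoref{sec:lp-bl}.
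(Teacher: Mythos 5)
Your proposal is correct and follows essentially the same route as the paper's proof: solve a Laplace--Beltrami problem with right-hand side $|\uppi|^{p-2}\uppi$ minus its mean using the $\WW^{2,p^*}$-regularity theory, take the (automatically tangential) covariant gradient of the solution as the test function, and exploit the duality identity $\int_\G \uppi\,|\uppi|^{p-2}\uppi = \|\uppi\|_{0,p;\G}^p$. The only difference is an immaterial sign convention in the Poisson problem.
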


Recalling \autoref{lem:Poincare} (Poincaré inequality), we are now in position of establishing the well-posedness of formulation \eqref{eq:weak-manifold-stokes} (equivalently, \eqref{eq:weak-manifold-stokes-nonHilbert} for $p=2$), which follows from the classical Babu\v{s}ka--Brezzi theory.

\begin{thm}[well-posedness of tangent Stokes in $\bH^1_t(\G) \times \LL^2_\#(\G)$]\label{thm:well-posedness}
Assume $\G$ is of class $C^2$.
For each $\bf \in (\bH^1_t(\G))'$ and $g \in \LL_\#^2(\G)$, there exists a unique $(\bu,\uppi) \in \bH^1_t(\G) \times \LL_\#^2(\G)$ solution of \eqref{eq:weak-manifold-stokes-nonHilbert} for $p=2$.
Moreover, there is a constant $C > 0$, depending only on $\G$, such that
\begin{equation*}
\|\bu\|_{1,\G} + \|\uppi\|_{0,\G} \leq C \left(  \|\bf \|_{(\bH^1_t(\G))'} + \|g\|_{0,\G} \right) .
\end{equation*}
\begin{proof}
Recalling the Poincaré inequality \eqref{eq:Poincare} for $p=2$, the bilinear form $(\bv,\bw) \mapsto \int_\G \nablaG \bv : \nablaG \bw$ is coercive in $\bH^1_t(\G)$.
Hence, in view of the inf-sup condition \eqref{eq:inf-sup-divergence-nonHilbert} for $p=2$, the assertion follows from a straightforward application of the Babu\v{s}ka--Brezzi theory in Hilbert spaces \cite[Theorem 2.1]{Gatica2014} to problem \eqref{eq:weak-manifold-stokes}.
\end{proof}
\end{thm}

The well-posedness of \eqref{eq:weak-manifold-stokes-nonHilbert} for general $p \in (1,\infty)$ is reserved to \autoref{sec:Stokes-wp-generalp}, for which the tools developed in the forthcoming \autoref{sec:Helmholtz} are essential.

\subsection{Helmholtz--Weyl decomposition of \texorpdfstring{$\bW^{m,p}_t(\G)$}{Wᵐᵖ}}\label{sec:Helmholtz}

In this section we show that any tangent vector field in $\bW^{m,p}_t(\G)$ can be uniquely decomposed as the sum of a tangent vector field with zero covariant divergence and the covariant gradient of a scalar field in $\WW^{m+1,p}_\#(\G)$.

Assume $\G$ is of class $C^{m,1}$ for some nonnegative integer $m$.
We define
\begin{align*}
\bW^{m,p}_{t,\sigma}(\G) &:= \{ \bv \in \bW_t^{m,p}(\G): \divG \bv = 0\},
\end{align*}
where for $m = 0$, $\divG \bv$ must be understood as an element of $(\WW^{1,p^*}(\G))'$ induced by the integration-by-parts formula \cite[Lemma B.5]{BouckNochettoYushutin2024}; that is
\begin{equation}\label{eq:def-distr-divergence}
\langle \divG \bv, q \rangle_{(\WW^{1,p^*}(\G))' \times \WW^{1,p^*}(\G)} = - \int_\G \bv \cdot \nablaG q, \qquad \forall q \in \WW^{1,p^*}(\G).
\end{equation}
Notice that for $m \geq 1$, $\div \bv$ as an element of $(\WW^{1,p^*}(\G))'$ is identified with the function $\div \bv \in \WW^{m-1,p}_\#(\G)$ via
\begin{equation*}
\langle \divG \bv, q \rangle_{(\WW^{1,p^*}(\G))' \times \WW^{1,p^*}(\G)} 
= \int_\G q \, \divG \bv, \qquad \forall q \in \WW^{1,p^*}(\G),
\end{equation*}
because of integration-by-parts formula \eqref{eq:int-by-parts} and a density argument.
Consequently, the condition $\divG \bv = 0$ in the definition of $\bW^{m,p}_{t,\sigma}(\G)$ for $m \geq 1$ can be understood to be pointwise a.e.

\begin{proposition}[Helmholtz--Weyl decomposition on $\G$]\label{prop:Helmholtzdecomposition}

Let $p \in (1,\infty)$.
Let $m$ be a nonnegative integer and assume $\G$ is of class $C^1$ if $m=0$ or $C^{m,1}$ if $m \geq 1$.
Then $\bW^{m,p}_t(\G)$ can be decomposed as
\begin{equation}\label{H^mp1-decomposition}
\bW^{m,p}_t(\G) = \bW^{m,p}_{t,\sigma}(\G) \oplus \nablaG \WW^{m+1,p}_\#(\G).
\end{equation}
Furthermore, this decomposition is stable in the sense that there exists a positive constant $C > 0$, depending only on $m$, $p$ and $\G$, such that for each $\bv = \bv_1 + \bv_2 \in \bW^{m,p}_t(\G)$ with $\bv_1 \in \bW^{m,p}_{t,\sigma}(\G)$ and $\bv_2 \in \nablaG \WW^{m+1,p}_\#(\G)$ it holds that
\begin{equation*}
\|\bv_1\|_{m,p;\G} + \|\bv_2\|_{m,p;\G} \leq C \|\bv\|_{m,p;\G}.
\end{equation*}
For $p=2$, this decomposition is $\LL^2$-orthogonal.

Finally, if $m = 0$ and $\G$ is only of class $C^{0,1}$, then the stable decomposition \eqref{H^mp1-decomposition} still holds as long as $p \in (2-\varepsilon,2+\varepsilon)$, for some $\varepsilon>0$ depending only on $\G$.

\begin{proof}
We start by proving the case $m = 0$.
Assume $\G$ is of class $C^1$ or of class $C^{0,1}$ if $p \in (2-\varepsilon,2+\varepsilon)$, where $\varepsilon>0$ is given by \autoref{thm:lb}
(Laplace--Beltrami).
Let $\bv \in \bL^p_t(\G)$ and let $\uppi \in \WW^{1,p}_\#(\G)$ be the unique solution of
\begin{equation}\label{auxiliar-problem-explicit-rewritten}
\int_\G \nablaG \uppi \cdot \nablaG q = \int_\G \bv \cdot \nablaG q, \qquad \forall q \in \WW^{1,p^*}_\#(\G),
\end{equation}
guaranteed by \autoref{thm:lb}.
We know that $\|\uppi\|_{1,p;\G} \lesssim \|\bv\|_{0,p;\G}$.

Then, define $\bg := \nablaG \uppi \in \nablaG \WW^{1,p}_\#(\G)$ and $\bh := \bv - \bg \in \bL^p_t(\G)$, the latter satisfying $\divG \bh = \divG \bv - \DeltaG \uppi = 0$ in $(\WW^{1,p^*}(\G))'$.
This gives the desired decomposition $\bv = \bh + \nablaG \uppi$, which is stable because
\begin{equation*}
\|\bh\|_{0,p;\G} + \|\nablaG \uppi\|_{0,p;\G}
\leq \|\bv\|_{0,p;\G} + 2\|\nablaG \uppi\|_{0,p;\G}
\lesssim \|\bv\|_{0,p;\G}.
\end{equation*}
The case $m \geq 1$ follows by integrating the right-hand side of \eqref{auxiliar-problem-explicit-rewritten} by parts and using the higher regularity result within \autoref{thm:lb}.
We omit further details.

For any $m \geq 0$, the uniqueness of the decomposition is proved as follows.
Suppose $\bv \in \bW^{m,p}_{t,\sigma}(\G) \cap \nablaG \WW^{m+1,p}_\#(\G)$.
Then, $\bv = \nablaG \phi$ for some $\phi \in \WW^{m+1,p}_\#(\G)$ and $\divG \bv = \DeltaG \phi = 0 \in (\WW^{1,p^*}(\G))'$;
that is, $\int_\G \nablaG \phi \cdot \nablaG q = 0$ for each $q \in \WW^{1,p^*}(\G)$.
Hence, by \autoref{thm:lb}, $\phi = 0$ and so $\bv = \textbf{0}$.

For $p=2$ the orthogonality of the decomposition follows from
\begin{equation*}
\int_\G \bh \cdot \bg = \int_\G \bh \cdot \nablaG \uppi = - \langle \divG \bh , \uppi \rangle_{(\HH^1(\G))' \times \HH^1(\G)} = 0.
\end{equation*}
This finishes the proof.
\end{proof}
\end{proposition}

\begin{definition}[Leray projection in $\bL^p_t(\G)$]\label{def:Leray-projection}
Under the hypotheses of \autoref{prop:Helmholtzdecomposition}, we define the Leray projection $P_\sigma: \bL^p_t(\G) \rightarrow \bL^p_{t,\sigma}(\G)$ by $P_\sigma \bv = \bh$ for each $\bv = \bh + \bg \in \bL^p_t(\G)$, where $\bh \in \bL^p_{t,\sigma}(\G)$ and $\bg \in \nablaG \WW_\#^{1,p}(\G)$ are the unique components of $\bv$ given by \autoref{prop:Helmholtzdecomposition} (Helmholtz--Weyl decomposition on $\G$).
It shall also prove useful to denote $P_\sigma^\perp \bv = \bg$.

\end{definition}

Notice that
\begin{equation}\label{Leray-Wmp-stability}
\|P_\sigma \bv\|_{m,p;\G} + \|P_\sigma^\perp \bv\|_{m,p;\G} \leq C\|\bv\|_{m,p;\G}, \qquad \bv \in \bW^{m,p}_t(\G).
\end{equation}
Also, for each $\bv \in \bL^p_t(\G)$ and $\bw \in \bL^{p^*}_t(\G)$, we have
\begin{equation*}
\int_\G \bv \cdot \bw = \int_\G P_\sigma \bv \cdot P_\sigma \bw + \int_\G P_\sigma^\perp \bv \cdot P_\sigma^\perp \bw.
\end{equation*}
\subsection{Well-posedness in \texorpdfstring{$\bW^{1,p}_t(\G) \times \LL^p_\#(\G)$}{W¹ᵖ x Lᵖ}}\label{sec:Stokes-wp-generalp}
In this section we verify that problem \eqref{eq:weak-manifold-stokes-nonHilbert} verifies the conditions of \autoref{thm:gen-BB} (generalized Babu\v{s}ka--Brezzi).
The following two results establish the bijectivity of $-\DeltaB := -\bP \divG \nablaG$ regarded as an operator acting from $\bW^{1,p}_{t,\sigma}(\G)$ to $(\bW^{1,p^*}_{t,\sigma}(\G))'$ (see \cite[Theorem~A.2]{BenavidesNochettoShakipov2025-a}).
\begin{proposition}[injectivity of the adjoint operator]\label{prop:non-Hilbert-BL-divergencefree-uniqueness}
Let $p \in (1,\infty)$ and assume $\G$ is of class $C^2$.
If $\bv \in \bW^{1,p^*}_{t,\sigma}(\G)$ satisfies
\begin{equation}\label{eq:injectivity-leaidng-adjoint-in-kernel}
\int_\G \nablaG \bu : \nablaG \bv = 0, \qquad \forall\, \bu \in \bW^{1,p}_{t,\sigma}(\G),
\end{equation}
then $\bv = \mathbf{0}$ a.e.~on $\G$.
\begin{proof}
Let $\bv$ be as above.
First, we claim that there exists unique $q \in \LL^{p^*}_\#(\G)$ such that
\begin{equation}\label{eq:stokes-injectivity-0}
\int_\G q \divG \bu = \int_\G \nablaG \bv : \nablaG \bu, \qquad \forall\, \bu \in \bW^{1,p}_t(\G).
\end{equation}
By virtue of \autoref{prop:Helmholtzdecomposition} (Helmholtz--Weyl decomposition on $\G$), each $\bu \in \bW^{1,p}_t(\G)$ in \eqref{eq:stokes-injectivity-0} can be uniquely decomposed as $\bu = \bw + \nablaG \phi$, where $\bw \in \bW^{1,p}_{t,\sigma}(\G)$ and $\phi \in \WW^{2,p}_\#(\G)$.
Consequently, employing the assumption \eqref{eq:injectivity-leaidng-adjoint-in-kernel} for $\bu$ replaced by $\bw \in \bW^{1,p}_{t,\sigma}(\G)$, problem \eqref{eq:stokes-injectivity-0} is equivalent to finding $q \in \LL^{p^*}_\#(\G)$ such that
\begin{equation} \label{eq:stokes-injectivity-1}
\int_\G q \, \DeltaG \phi = \int_\G \nablaG \bv : \nablaG \nablaG \phi, \qquad \forall \phi \in \WW^{2,p}_\#(\G).
\end{equation}
Problem \eqref{eq:stokes-injectivity-1} is an instance of an ultraweak formulation \eqref{eq:dual-problem} for the Laplace--Beltrami problem, whose well-posedness is ensured by \autoref{lem:ultraweak-allp-LB} (ultra-weak solutions of the Laplace--Beltrami operator on $\G$).
Therefore, there is a unique $q \in \LL^{p^*}_\#(\G)$ that satisfies \eqref{eq:stokes-injectivity-1}, whence also \eqref{eq:stokes-injectivity-0}.
If $p^* \in [2,\infty)$, then \autoref{thm:well-posedness} (well-posedness of tangent Stokes in $\bH^1_t(\G) \times \LL^2_\#(\G)$) applied to the context given by \eqref{eq:stokes-injectivity-0} and $\divG \bv = 0$, yields $\bv = \textbf{0}$.

Otherwise if $p^* \in (1,2)$, we proceed by a bootstrapping argument based on Sobolev embeddings (cf.~\autoref{sec:difgeo-Sobspace}) to increase the Sobolev integrability parameter of the pair $(\bv,q)$.
First, by utilizing \autoref{lem:exploiting-symmetry-covarianthessian} (weak commutator identity), which is valid by density for $\phi \in \WW^{2,p}_\#(\G)$, integration-by-parts formula \eqref{eq:int-by-parts} and the fact that $\divG \bv = 0$, we can rewrite \eqref{eq:stokes-injectivity-1} as
\begin{equation}\label{eq:stokes-injectivity-2}
\int_\G q \, \DeltaG \phi = \int_\G h[\bv] \, \phi, \qquad \forall \phi \in \WW^{2,p}_\#(\G),
\end{equation}
where $h[\bv] := \divG\left((\tr(\bB)\bB - \bB^2) \bv\right) \in \LL^{p^*}(\G)$.
Therefore, we deduce from the $\LL^p$-based Sobolev regularity theory of the Laplace--Beltrami operator for $p \in (1,\infty)$ (see \autoref{lem:ultraweak-allp-LB} and \autoref{thm:lb}) that $q \in \LL^{p^*}_\#(\G)$ is in fact in $\WW^{2,p^*}_\#(\G)$.
From \autoref{prop:GNG-embeddings} (Gagliardo--Nirenberg--Sobolev inequality), we infer that $q \in \WW^{1,p^*_0}_\#(\G)$, where $p_0^* := \frac{dp^*}{d-p^*} > p^*$. With this, we can go back to \eqref{eq:stokes-injectivity-0} and use \autoref{thm:eq:weak-manifold-BochnerPoisson-nonhilbertian-wellposedness} (well-posedness of the Bochner Laplacian in $\bW^{1,p}_t(\G)$) to deduce that $\bv \in \bW^{1,p^*_0}_t(\G)$.
Using the improved integrability of $\bv$, we consequently obtain the improved integrability for $q \in \WW^{2,p^*_0}_\#(\G)$ from \eqref{eq:stokes-injectivity-2}.
Iterating this process eventually yields $(\bv, q) \in \bH^1_t(\G) \times \HH^2_\#(\G)$, which brings us back to the first case.
This ends the proof.
\end{proof}
\end{proposition}

\begin{proposition}[surjectivity of the adjoint operator] \label{prop:infsup-BochnerLaplacian-nonHilbert}
Let $p \in (1,\infty)$ and assume that $\G$ is of class $C^2$.
Then, there exists a positive constant $\alpha > 0$ such that
\begin{equation}\label{eq:infsup-BochnerLaplacian-nonHilbert}
\sup_{\substack{\bv \in \bW^{1,p^*}_{t,\sigma}(\G)\\ \bv \neq \mathbf{0}}} \frac{\left| \int_\G \nablaG \bu : \nablaG \bv \right|}{\|\bv\|_{1,p^*;\G}} \geq \alpha \| \bu\|_{1,p;\G}, \qquad \forall \bu \in \bW^{1,p}_{t,\sigma}(\G).
\end{equation}

\begin{proof}
\textbf{Step 1:}
We first establish the (in principle) weaker estimate
\begin{equation}\label{eq:WORSE-infsup-BochnerLaplacian-nonHilbert}
\|\bu\|_{0,p;\G} + \sup_{\substack{\bv \in \bW^{1,p^*}_{t,\sigma}(\G)\\ \bv \neq \mathbf{0}}} \frac{\left| \int_\G \nablaG \bu : \nablaG \bv \right|}{\|\bv\|_{1,p^*;\G}} \geq \alpha \|\bu\|_{1,p;\G}, \qquad \forall \bu \in \bW^{1,p}_{t,\sigma}(\G),
\end{equation}
which will be later used in Step 2.

Indeed, by using properties of the Leray projection (cf.~\autoref{def:Leray-projection}) we write in first instance,
\begin{equation*}
\begin{multlined}
\sup_{\substack{\bv \in \bW^{1,p^*}_{t,\sigma}(\G)\\ \bv \neq \mathbf{0}}} \frac{\left| \int_\G \nablaG \bu : \nablaG \bv \right|}{\|\bv\|_{1,p^*;\G}}
= \sup_{\substack{\bv \in \bW^{1,p^*}_t(\G)\\ P_\sigma \bv \neq \mathbf{0}}} \frac{\left| \int_\G \nablaG \bu : \nablaG (P_\sigma \bv) \right|}{\|P_\sigma \bv\|_{1,p^*;\G}}
\stackrel{\eqref{Leray-Wmp-stability} }{\gtrsim} \sup_{\substack{\bv \in \bW^{1,p^*}_t(\G)\\ \bv \neq \mathbf{0}}} \frac{\left| \int_\G \nablaG \bu : \nablaG (P_\sigma \bv) \right|}{\|\bv\|_{1,p^*;\G}}\\
= \sup_{\substack{\bv \in \bW^{1,p^*}_t(\G)\\ \bv \neq \mathbf{0}}} \frac{\left| \int_\G \nablaG \bu : \nablaG \bv - \int_\G \nablaG \bu : \nablaG P_\sigma^\perp \bv \right|}{\|\bv\|_{1,p^*;\G}}.
\end{multlined}
\end{equation*}
Recalling that $P_\sigma^\perp \bv \in \nablaG \WW^{2,p^*}_\#(\G)$ for each $\bv \in \bW^{1,p^*}_t(\G)$, and applying \autoref{lem:exploiting-symmetry-covarianthessian} (weak commutator identity) with $\nablaG \phi \gets P_\sigma^\perp \bv$ and $\bv \gets \bu$, which is valid by density, yields
\begin{equation*}
\int_\G \nablaG \bu : \nablaG P_\sigma^\perp \bv
= \int_\G \divG \bu \, \divG P_\sigma^\perp \bv - \int_\G \left[\left(\tr(\bB)\bB - \bB^2\right)\bu\right] \cdot P_\sigma^\perp \bv
= - \int_\G \left[\left(\tr(\bB)\bB - \bB^2\right)\bu\right] \cdot P_\sigma^\perp \bv,
\end{equation*}
for each $\bv \in \bW^{1,p^*}_t(\G)$.
By utilizing this last identity, the $\bW^{1,p^*}(\G)$-stability of $P_\sigma^\perp$ (cf.~\eqref{Leray-Wmp-stability}) and the corresponding inf-sup condition for the Bochner Laplacian given by \autoref{thm:eq:weak-manifold-BochnerPoisson-nonhilbertian-wellposedness} (well-posedness of the Bochner Laplacian in $\bW^{1,p}_t(\G)$) and \cite[Theorem~A.1]{BenavidesNochettoShakipov2025-a} (characterization of injectivity and surjectivity), we arrive at
\begin{equation*}
\sup_{\substack{\bv \in \bW^{1,p^*}_{t,\sigma}(\G)\\ \bv \neq \mathbf{0}}} \frac{\left| \int_\G \nablaG \bu : \nablaG \bv \right|}{\|\bv\|_{1,p^*;\G}}
\gtrsim \sup_{\substack{\bv \in \bW^{1,p^*}_t(\G)\\ \bv \neq \mathbf{0}}} \frac{\left| \int_\G \nablaG \bu : \nablaG \bv \right|}{\|\bv\|_{1,p^*;\G}} - \| \bu\|_{0,p;\G}
\gtrsim \|\bu\|_{1,p;\G} - \| \bu\|_{0,p;\G}.
\end{equation*}
This establishes \eqref{eq:WORSE-infsup-BochnerLaplacian-nonHilbert} and finishes the proof of Step 1.

\textbf{Step 2:}
With \eqref{eq:WORSE-infsup-BochnerLaplacian-nonHilbert} at hand, we now proceed to prove \eqref{eq:infsup-BochnerLaplacian-nonHilbert} by contradiction in the spirit of Peetre--Tartar's Lemma \cite[Lemma~A.38]{ErnGuermond2004}.

In fact, assume there exists a sequence $\{\bu_n\}_{n \in \NN} \subseteq \bW^{1,p}_{t,\sigma}(\G)$ such that $\|\bu_n\|_{1,p;\G} = 1$ for each $n \in \NN$ and
\begin{equation*}
\sup_{0 \neq \bv \in \bW^{1,p^*}_{t,\sigma}(\G)} \frac{\left| \int_\G \nablaG \bu_n : \nablaG \bv \right|}{\|\bv\|_{1,p^*;\G}} \xrightarrow[]{n \to \infty} 0.
\end{equation*}
In particular,
\begin{equation}\label{aux-consequence-sup-go-to-zero}
\lim_{n\to\infty} \int_\G \nablaG \bu_n : \nablaG \bv = 0, \qquad \forall \bv \in \bW^{1,p^*}_{t,\sigma}(\G).
\end{equation}
Since $\{\bu_n\}_{n \in \NN}$ is bounded in $\bW^{1,p}_{t,\sigma}(\G)$ (which is reflexive), the Banach--Alaoglu Theorem \cite[Theorem 3.18]{Brezis2011} and \autoref{rem:W1p-compactly-embedded-in-L^p} (compact Sobolev embedding) implies the existence of $\bu \in \bW^{1,p}_{t,\sigma}(\G)$ such that (up to a subsequence)
\begin{equation*}
\bu_n \rightharpoonup \bu \quad \text{in $\bW^{1,p}_{t,\sigma}(\G)$}, \qquad 
\bu_n \to \bu \quad \text{in $\bL^p_{t,\sigma}(\G)$}.
\end{equation*}
Thus, for each $\bv \in \bW^{1,p^*}_{t,\sigma}(\G)$,
\begin{equation*}
\int_\G \nablaG \bu : \nablaG \bv
= \lim_{n\to\infty} \int_\G \nablaG \bu_n : \nablaG \bv
\stackrel{\eqref{aux-consequence-sup-go-to-zero}}{=} 0.
\end{equation*}
By \autoref{prop:non-Hilbert-BL-divergencefree-uniqueness} (injectivity of the adjoint operator), we deduce that $\bu = 0$ a.e.~on $\G$.
Hence, passing to the limit $n \to \infty$ in inequality \eqref{eq:WORSE-infsup-BochnerLaplacian-nonHilbert} for $\bu=\bu_n$ yields $\lim_{n \to \infty} \|\bu_n\|_{1,p;\G} = 0$, which contradicts the fact that $\|\bu_n\|_{1,p;\G} = 1$ for each $n \in \NN$.
This concludes the proof.
\end{proof}
\end{proposition}

With \autoref{prop:non-Hilbert-BL-divergencefree-uniqueness} and \autoref{prop:infsup-BochnerLaplacian-nonHilbert} at hand, we now establish the well-posedness of problem \eqref{eq:weak-manifold-stokes-nonHilbert}.

\begin{thm}[well-posedness of the tangent Stokes equations in $\bW^{1,p}_t(\G) \times \LL^p_\#(\G)$] \label{thm:stokes-lp-well-posedness}
Let $p \in (1,\infty)$ and assume $\G$ is of class $C^2$.
Then, for each $\bf \in (\bW^{1,p^*}_t(\G))'$ and $g \in \LL^p_\#(\G)$ there exist a unique $(\bu,\uppi) \in \bW^{1,p}_t(\G) \times \LL^p_\#(\G)$ solution of \eqref{eq:weak-manifold-stokes-nonHilbert}.
Moreover, there exists a constant $C>0$, depending only on $\G$ and $p$, such that
\begin{equation*}
\|\bu\|_{1,p;\G} + \|\uppi\|_{0,p;\G} \leq C \left( \|\bf\|_{(\bW^{1,p^*}_t(\G))'} + \|g\|_{0,p;\G} \right).
\end{equation*}
\begin{proof}
We first notice that all bilinear forms defining problem \eqref{eq:weak-manifold-stokes-nonHilbert} are bounded in their respective domains, and that for each $r \in (1,\infty)$, the set $V := \left\{\bw \in \bW^{1,r}_t(\G): \int_\G q \divG \bw = 0, \quad \forall q \in \LL^{r^*}_\#(\G)\right\}$ reduces to $V = \bW^{1,r}_{t,\sigma}(\G)$.
Then, thanks to \autoref{prop:non-Hilbert-BL-divergencefree-uniqueness}, \autoref{prop:infsup-BochnerLaplacian-nonHilbert} and \autoref{prop:inf-sup-divergence-nonHilbert}, the proof follows from a straightforward application of \autoref{thm:gen-BB} (generalized Babu\v{s}ka--Brezzi theory in reflexive Banach spaces).
\end{proof}
\end{thm}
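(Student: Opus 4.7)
My plan is to apply the generalized Babu\v{s}ka--Brezzi theory in reflexive Banach spaces (\autoref{thm:gen-BB}) directly to the saddle-point system \eqref{eq:weak-manifold-stokes-nonHilbert}. The forms involved are $a(\bu,\bv) := \int_\G \nablaG \bu : \nablaG \bv$ and $b(\bv,q) := -\int_\G q\,\divG \bv$; their boundedness on $\bW^{1,p}_t(\G) \times \bW^{1,p^*}_t(\G)$ and $\bW^{1,p^*}_t(\G) \times \LL^p_\#(\G)$ (and the swapped variants) follows immediately from H\"older's inequality. The generalized Brezzi framework requires, symmetrically in trial and test, an inf-sup for $a$ on the pair of kernels, injectivity of the adjoint operator on the dual kernel, and an inf-sup for $b$.

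The first task is to identify the kernel $V := \{ \bw \in \bW^{1,p}_t(\G) : \int_\G q\,\divG \bw = 0 \text{ for all } q \in \LL^{p^*}_\#(\G)\}$ and its analogue $V^*\subset \bW^{1,p^*}_t(\G)$. For any $\bw \in \bW^{1,p}_t(\G)$, testing the integration-by-parts identity of \cite[Lemma B.5]{BouckNochettoYushutin2024} against the constant $1$ gives $\int_\G \divG \bw = 0$, i.e.\ $\divG \bw \in \LL^p_\#(\G)$; by duality of $\LL^p_\#$ and $\LL^{p^*}_\#$, the condition in the definition of $V$ therefore forces $\divG \bw = 0$ a.e.\ on $\G$. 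Consequently $V = \bW^{1,p}_{t,\sigma}(\G)$ and, symmetrically, $V^* = \bW^{1,p^*}_{t,\sigma}(\G)$, which is precisely the setting in which the three auxiliary results were stated.

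With these kernels identified, the remaining hypotheses of \autoref{thm:gen-BB} match the three preparatory propositions one-to-one: \autoref{prop:non-Hilbert-BL-divergencefree-uniqueness} supplies the injectivity of the transpose of the leading operator on $V^*$, \autoref{prop:infsup-BochnerLaplacian-nonHilbert} gives the inf-sup for $a$ restricted to $V \times V^*$ (and, after interchanging the roles of $p$ and $p^*$, on $V^* \times V$ as well), and \autoref{prop:inf-sup-divergence-nonHilbert} provides the inf-sup for $b$ on $\bW^{1,p^*}_t(\G) \times \LL^p_\#(\G)$ (again swapping indices for the symmetric condition). Invoking \autoref{thm:gen-BB} then delivers existence, uniqueness, and the continuous-dependence estimate with the stated form.

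I do not anticipate any real obstruction here, since the analytical content has been packaged into the preceding propositions. The one delicate point is the characterization of $V$ above: without exploiting the lack of boundary, one could not conclude that testing against mean-zero $q$ forces $\divG \bw$ to vanish; this is where the closedness of $\G$ enters the Brezzi assembly.
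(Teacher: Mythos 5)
Your proposal is correct and follows essentially the same route as the paper: bound the bilinear forms, identify the kernel of the divergence constraint with $\bW^{1,r}_{t,\sigma}(\G)$ (your observation that $\int_\G \divG\bw = 0$ automatically on the closed manifold, so testing against mean-zero $q$ suffices, is exactly what makes the paper's "reduces to" step work), and then feed \autoref{prop:non-Hilbert-BL-divergencefree-uniqueness}, \autoref{prop:infsup-BochnerLaplacian-nonHilbert} and \autoref{prop:inf-sup-divergence-nonHilbert} into \autoref{thm:gen-BB}. No gaps.
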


\subsection{Higher \texorpdfstring{$\LL^p$}{Lᵖ}-based Sobolev regularity}\label{sec:higher-reg-Stokes}

Having established the well-posedness of \eqref{eq:weak-manifold-stokes-nonHilbert}, we are now ready to derive higher-regularity results directly for any $p \in (1,\infty)$.
The proof strategy is a generalization of the trick used in \cite{ORZ2021}.
\begin{thm}[higher $\LL^p$-based regularity of the tangent Stokes equations]\label{thm:Lp-based-regularity-tangentStokes}
Let $p \in (1,\infty)$ and assume $\G$ is of class $C^{m+2,1}$ for some nonnegative integer $m$.
If $\bf \in \bW^{m,p}_t(\G)$ and $g \in \WW^{m+1,p}_\#(\G)$, then the solution $(\bu,\uppi) \in \bW^{1,p}_t(\G) \times \LL^p_\#(\G)$ of \eqref{eq:weak-manifold-stokes-nonHilbert} (provided by \autoref{thm:stokes-lp-well-posedness}) belongs to $\bW^{m+2,p}_t(\G) \times \WW^{m+1,p}_\#(\G)$.
Moreover, there exists a positive constant $C$, depending only on $\G$, $m$ and $p$, such that
\begin{equation}\label{eq:allp-Lpbased-Stokes_higher-apriori}
\|\bu\|_{m+2,p;\G} + \|\uppi\|_{m+1,p;\G} \leq C \left( \|\bf\|_{m,p;\G} + \|g\|_{m+1,p;\G}\right).
\end{equation}
\end{thm}

\begin{proof}
We first prove the case $m=0$.
For general $m$ the proof will follow by induction.

\noindent \textbf{Step 1: Case $m=0$.}
Recalling \autoref{lem:exploiting-symmetry-covarianthessian} (weak commutator identity), and using a density argument, we start by choosing $\bv = \nablaG q$ with arbitrary $q \in \WW^{2,p^*}_\#(\G)$ in the first equation of \eqref{eq:weak-manifold-stokes-nonHilbert}, thereby arriving at
\begin{multline*}
- \int_\G \uppi \, \DeltaG q
= \int_\G \bf \cdot \nablaG q - \int_\G \nablaG \bu : \nablaG \nablaG q
\stackrel{\eqref{exploiting-symmetry-covarianthessian}}{=} \int_\G \bf \cdot \nabla_\G q - \int_\G \divG \bu \cdot \DeltaG q + \int_\G \left(\tr(\bB)\bB - \bB^2\right)\bu \cdot \nablaG q.
\end{multline*}
Furthermore, upon using the fact that $\div_\G \bu = g$ a.e.~on $\G$, and integrating-by-parts (cf.~\eqref{eq:int-by-parts}) the second term in the right-hand side, we further obtain
\begin{equation*}
- \int_\G \uppi \, \DeltaG q = \int_\G \bh[\bu] \cdot \nabla_\G q, \qquad \forall q \in\WW^{2,p^*}_\#(\G),
\end{equation*}
where $\bh[\bu] := \bf + \nablaG g + (\tr(\bB)\bB - \bB^2)\bu \in \bL^p_t(\G)$.
That is, $\uppi \in \LL^p_\#(\G)$ satisfies the ultra-weak formulation of the Laplace--Beltrami \eqref{eq:dual-problem} with right-hand side given by $-\divG \bh[\bu]$, understood as an element of $(\WW^{2,p^*}_\#(\G))'$ by integration by parts.
Since $-\divG \bh[\bu]$ can also be seen as an element of $(\WW^{1,p^*}_\#(\G))'$, we deduce from
\autoref{thm:lb}
(Laplace--Beltrami) that $\uppi \in \WW^{1,p}_\#(\G)$ and
\begin{equation}\label{aux:estimate-pi}
\|\uppi\|_{1,p;\G}
\lesssim \|\divG \bh[\bu]\|_{(\bW^{1,p^*}(\G))'}
\lesssim \|\bh[\bu]\|_{0,p;\G}
\lesssim \|\bf\|_{0,p;\G} + \|g\|_{1,p;\G} + \|\bu\|_{0,p;\G}.
\end{equation}
With this further regularity on $\uppi$, we can now integrate by parts in the first equation of \eqref{eq:weak-manifold-stokes-nonHilbert}, yielding
\begin{equation*}
\int_\G \nablaG \bu: \nablaG \bv = \int_\G \bj[\uppi] \cdot \bv, \quad \forall\, \bv \in \bW^{1,p^*}_t(\G),
\end{equation*}
where $\bj[\uppi] := \bf - \nablaG \uppi \in \bL^p_t(\G)$.
Therefore, by \autoref{thm:allp-based-regularity-BochnerLaplacian} ($\bW^{m+2,p}$-regularity of the Bochner Laplacian on $\G$), we infer that $\bu \in \bW^{2,p}_t(\G)$ and
\begin{equation}\label{aux:estimate-u}
\|\bu\|_{2,p;\G}
\lesssim \|\bj[\uppi]\|_{0,p;\G}
\lesssim \|\bf\|_{0,p;\G} + \|\uppi\|_{1,p;\G}
\lesssim \|\bf\|_{0,p;\G} + \|g\|_{1,p;\G} + \|\bu\|_{0,p;\G}.
\end{equation}
In this way, combining \eqref{aux:estimate-pi} and \eqref{aux:estimate-u}, and using \autoref{thm:stokes-lp-well-posedness} (well-posedness of tangent Stokes in $\bW^{1,p}_t(\G) \times \LL^p_\#(\G)$) to bound $\|\bu\|_{0,p;\G}$ in terms of $\bf$ and $g$, we finally arrive at
\begin{equation*}
\|\bu\|_{2,p;\G} + \|\uppi\|_{1,p;\G}
\lesssim \|\bf\|_{0,p;\G} + \|g\|_{1,p;\G}.
\end{equation*}
This concludes the proof for $m = 0$.
\\ \\
\textbf{Step 2: Case $m \geq 1$.}
Assume the assertion holds for a fixed $m \geq 0$.
Assume $\G$ is of class $C^{m+3,1}$ and $\bf \in \bW^{m+1,p}_t(\G)$ and $g \in \WW^{m+2,p}_\#(\G)$.
By the induction hypothesis we deduce that $(\bu,\uppi) \in \bW^{m+2,p}_t(\G) \times \WW^{m+1,p}_\#(\G)$ and
\begin{equation}\label{aux:ind_hyp}
\|\bu\|_{m+2,p;\G} + \|\uppi\|_{m+1,p;\G} \leq C_{m,p} \left( \|\bf\|_{m,p;\G} + \|g\|_{m+1,p;\G}\right).
\end{equation}
Now, proceeding as in the proof of the case $m=0$, we observe that $\bh[\bu] := \bf + \nablaG g + (\tr(\bB)\bB - \bB^2)\bu \in \bW^{m+1,p}_t(\G)$.
Hence, by the higher regularity result within \autoref{thm:lb} (Laplace--Beltrami) and using \eqref{aux:ind_hyp} we deduce that $\uppi \in \WW^{m+2,p}_\#(\G)$ and
\begin{equation*}
\|\uppi\|_{m+2,p;\G}
\lesssim \|\divG \bh[\bu]\|_{m,p;\G}
\lesssim \|\bf\|_{m+1,p;\G} + \|g\|_{m+2,p;\G} + \|\bu\|_{m+1,p;\G}
\lesssim \|\bf\|_{m+1,p;\G} + \|g\|_{m+2,p;\G}.
\end{equation*}
In turn, $\bj[\uppi] := \bf - \nablaG \uppi \in \bW^{m+1,p}_t(\G)$, whence by \autoref{thm:allp-based-regularity-BochnerLaplacian} ($\bW^{m+2,p}$-regularity of the Bochner Laplacian on $\G$) we obtain that $\bu \in \bW^{m+3,p}_t(\G)$ and
\begin{equation*}
\|\bu\|_{\bW^{m+3,p}(\G)}
\lesssim \|\bj[\uppi]\|_{m+1,p;\G}
\lesssim \|\bf\|_{m+1,p;\G} + \|\uppi\|_{m+2,p;\G}
\lesssim \|\bf\|_{m+1,p;\G} + \|g\|_{m+2,p;\G}.
\end{equation*}
This concludes the induction step, and hence, the proof of \autoref{thm:Lp-based-regularity-tangentStokes}.
\end{proof}

\subsection{The Stokes operator}\label{sec:Stokes-operator}

In this section, we study spectral properties of the tangent Stokes operator on $\G$ induced by \eqref{eq:manifold-stokes}, thereby mimicking spectral properties of the classical Stokes operator in flat domains \cite[\textsection 4]{ConstantinFoias1988} \cite[Part I, \textsection 2]{RobinsonRodrigoSadowski2016}.
We will later make use of this theory in \autoref{sec:existence-incompressibleNS-d234} to establish the existence of solutions for the incompressible tangent Navier--Stokes equations via the Faedo--Galerkin method. We start by introducing the Stokes operator under the ``minimal regularity'' regime.
\begin{definition}[Stokes operator on $\G$]\label{def:Stokes-operator}
Let $p \in (1,\infty)$ and assume $\G$ is of class $C^2$.
We define the Stokes operator on $\G$ as the linear operator $\wt A: \bW^{1,p}_{t,\sigma}(\G) \rightarrow (\bW^{1,p^*}_{t,\sigma}(\G))'$ such that
\begin{equation}\label{eq:Stokes-min-reg}
\langle \wt A\bu, \bv \rangle_{(\bW^{1,p^*}_{t,\sigma}(\G))' \times \bW^{1,p^*}_{t,\sigma}(\G)} := \int_\G \nablaG \bu : \nablaG \bv, \qquad \forall \bu \in \bW^{1,p}_{t,\sigma}(\G), \quad \bv \in \bW^{1,p^*}_{t,\sigma}(\G).
\end{equation}
\end{definition}
In agreement with our philosophy of requiring minimal regularity of $\G$, we favor \autoref{def:Stokes-operator} in lieu of the classical approach of defining a ``strong'' Stokes operator $A = P_\sigma (\bP \divG \nablaG \cdot)$ because the latter would require $\G$ to be of class $C^{2,1}$. We reveal the connection between $\widetilde A$ and $A$ in \eqref{eq:weak-strong-stokes-operator} below.

From \autoref{prop:non-Hilbert-BL-divergencefree-uniqueness} (injectivity of the adjoint operator), \autoref{prop:infsup-BochnerLaplacian-nonHilbert} (surjectivity of the adjoint operator) and \cite[Theorem~A.2]{BenavidesNochettoShakipov2025-a} (Banach--Ne\v{c}as--Babu\v{s}ka) we know that $\wt A: \bW^{1,p}_{t,\sigma}(\G) \rightarrow (\bW^{1,p^*}_{t,\sigma}(\G))'$ is a continuous bijection.
For $p=2$, we notice that endowing the Hilbert space $H := \bH^1_{t,\sigma}(\G)$ and its dual $H'$ with the equivalent inner products (cf.~\autoref{lem:Poincare} (Poincaré inequality))
\begin{equation}\label{equiv:H1tsig-inner-prod}
(\bu,\bv)_H := \int_\G \nablaG \bu : \nablaG \bv, \qquad \forall \bu,\bv \in H.
\end{equation}
and
\begin{equation}\label{equiv:dualH1tsig-inner-prod}
(\bf,\bg)_{H'}
:= (\wt A^{-1}\bf,\wt A^{-1}\bg)_H
= \langle \bf, \wt A^{-1}\bg \rangle_{H' \times H}
= \langle \bg, \wt A^{-1}\bg \rangle_{H \times H'}
,
\qquad \forall\, \bf,\bg \in H',
\end{equation}
trivially turns $\wt A: H \rightarrow H'$ into an isometry.
Let $\iota: H \rightarrow H'$ be the compact canonical embedding of $H$ into $H'$ given by (cf.~\autoref{sec:difgeo-Sobspace})
\begin{equation*}
\langle \iota(\bu), \bv \rangle_{H' \times H} := \int_\G \bu \cdot \bv, \qquad \forall \bu,\bv \in H.
\end{equation*}
We next introduce the linear operator $T : H' \rightarrow H'$ defined by $T := \iota \circ \wt A^{-1}$.
This operator is compact, self-adjoint and positive because
\begin{equation*}
(T\bf,\bg)_{H'}
= \langle \iota (\wt A^{-1} \bf) , \wt A^{-1}\bg \rangle_{H' \times H}
= \int_\G \wt A^{-1} \bf \cdot \wt A^{-1}\bg
= (\bf,T\bg)_{H'},
\end{equation*}
and $(T\bf,\bf)_{H'} = \|\bf\|_{H'}^2 \geq 0$ for each $\bf, \bg \in H'$.

We are now ready to establish the spectral properties of $\wt A: H \rightarrow H'$.

\begin{thm}[eigendecomposition of the Stokes operator]\label{thm:Stokes-eigen-minreg}
Assume $\G$ is of class $C^2$.
Then, there exists a family of functions $\{\bv_n\}_{n \in \NN} \subseteq \bigcap_{2 \leq p<\infty} \bW^{1,p}_{t,\sigma}(\G)$ and a nondecreasing sequence of $\{\omega_n\}_{n \in \NN}$ of positive numbers with $\lim_{n \to \infty} \omega_n = \infty$ such that
\begin{enumerate}
\item\label{it:eigen-equation-lowreg} For each $n \in \NN$, $(\bv_n,\omega_n)$ is an eigenpair of the Stokes operator, i.e.: $\wt A \bv_n = \omega_n \iota(\bv_n)$ in $(\bH^1_{t,\sigma}(\G))'$, or equivalently,
\begin{equation}\label{eq:weak-Stokes-eigfun}
\int_\G \nablaG \bv_n : \nablaG \bw = \omega_n \int_\G \bv_n \cdot \bw, \qquad \forall \bw \in \bH^1_{t,\sigma}(\G).
\end{equation}
\item\label{it:dualH1-basis} $\{\omega_n^{1/2}\iota(\bv_n)\}_{n \in \NN}$ is an orthonormal basis of $H'= (\bH^1_{t,\sigma}(\G))'$ with respect to the equivalent inner product \eqref{equiv:dualH1tsig-inner-prod}.
\item\label{it:L^2-basis} $\{\bv_n\}_{n \in \NN}$ is an orthonormal basis of $\bL^2_{t,\sigma}(\G)$.
\item\label{it:H^1-basis} $\{\omega_n^{-1/2}\bv_n\}_{n \in \NN}$ is an orthonormal basis of $H = \bH^1_{t,\sigma}(\G)$ with respect to the equivalent inner product \eqref{equiv:H1tsig-inner-prod}.
\end{enumerate}
\begin{proof}
We already know that the linear operator $T$ is compact, self-adjoint and positive with respect to the inner product \eqref{equiv:dualH1tsig-inner-prod}.
Hence, since $(\bH^1_{t,\sigma}(\G))'$ is separable, it follows from \cite[\textsection D.6]{Evans2010} that there exist an orthogonal basis $\{\bz_n\}_{n \in \NN}$ of $(\bH^1_{t,\sigma}(\G))'$ and a sequence of (positive) nonincreasing numbers $\{\lambda_n \}_{n \in \NN}$ with $\lim_{n \to \infty} \lambda_n = 0$ such that $T \bz_n = \lambda_n \bz_n$ in $(\bH^1_{t,\sigma}(\G))'$.
Equivalently, upon defining $\bv_n := \wt A^{-1} \bz_n \in \bH^1_{t,\sigma}(\G)$ and $\omega_n := \lambda_n^{-1}$, we see that $\wt A \bv_n = \omega_n \iota(\bv_n)$ in $(\bH^1_{t,\sigma}(\G))'$ as well as
\begin{equation}\label{weak-eigen-problem}
\int_\G \nablaG \bv_n : \nablaG \bw 
\stackrel{\eqref{eq:Stokes-min-reg}}{=} \langle \wt A\bv_n, \bw \rangle_{H' \times H}
= \omega_n \langle \iota(\bv_n), \bw \rangle_{H' \times H}
= \omega_n \int_\G \bv_n \cdot \bw, \qquad \forall \bw \in H,
\end{equation}
which proves \eqref{eq:weak-Stokes-eigfun}.

Without loss of generality assume $(\bz_n,\bz_k)_{(\bH^1_{t,\sigma}(\G))'} = \delta_{nk} \omega_k$ for each $n,k \in \NN$.
To show that $\{\omega_n^{1/2} \bv_n\}_{n \in \NN} \equiv \{\omega_n^{1/2} \iota \bv_n\}_{n \in \NN}$ is orthonormal in $H'$, it is enough to notice that
\begin{equation*}
(\iota \bv_n, \iota \bv_k)_{H'}
= (\iota \wt A^{-1} \bz_n, \iota \wt A^{-1}\bz_k)_{H'}
= (T \bz_n, T\bz_k)_{H'}
= \omega_n^{-1} \omega_k^{-1} (\bz_n, \bz_k)_{H'}
= \omega_n^{-1} \delta_{nk},
\end{equation*}
for each $n,k \in \NN$.
In turn, for the $H$-orthonormality of $\{\omega_n^{-1/2}\bv_n\}_{n \in \NN}$ and the $\bL^2_{t,\sigma}(\G)$-orthonormality of $\{\bv_n\}_{n \in \NN}$ it suffices to see that
\begin{equation}\label{eigen-H1-relation}
\omega_k \int_\G \bv_n \cdot \bv_k
\stackrel{\eqref{weak-eigen-problem}}{=} 
(\bv_n,\bv_k)_{H}
\stackrel{\eqref{equiv:dualH1tsig-inner-prod}}{=} (\wt A \bv_n,\wt A \bv_k)_{H'}
= (\bz_n,\bz_k)_{H'}
= \omega_k \delta_{nk}.
\end{equation}
To show that $\{\iota \bv_n\}_{n \in \NN}$ is a basis of $H'$, let $\bf \in H'$ such that $(\bf,\iota\bv_n)_{H'} = 0$ for each $n \in \NN$.
That is, $(\bf,T\bz_n)_{H'} = \lambda_n (\bf,\bz_n)_{H'} = 0$ for each $n \in \NN$.
Since $\lambda_n \neq 0$ and $\{\bz_n\}_{n \in \NN}$ is an orthogonal basis of $H'$, we deduce that $\bf = \mathbf{0}$.
Hence, $\vspan\{\omega_n^{1/2} \iota \bv_n\}_{n \in \NN}$ is dense in $H'$.
Items \autoref{it:L^2-basis} and \autoref{it:H^1-basis} are shown in a similar fashion, and hence we omit their proofs.

Finally, for each $p \in (2,\infty)$, the $\bW^{1,p}_{t,\sigma}(\G)$-regularity of each $\bv_n$ follows from a bootstrapping argument resulting from combining the invertibility of the Stokes operator \eqref{eq:Stokes-min-reg} applied to problem \eqref{weak-eigen-problem} with Sobolev embeddings (cf.~\autoref{sec:difgeo-Sobspace}).
\end{proof}
\end{thm}
As expected, if $\G$ is more regular, say of class $C^{2,1}$, then the family $\{\bv_n\}_{n \in \NN}$ provided by \autoref{thm:Stokes-eigen-minreg} are actually a.e.~eigenfunctions of the ``strong'' Stokes operator $A := - P_\sigma (\bP \divG \nablaG \cdot): \bW^{2,p}_{t,\sigma}(\G) \rightarrow \bL^p_{t,\sigma}(\G)$;
see \autoref{thm:Stokes-eigen-highreg} below.
Before stating the result notice that integration-by-parts formula \eqref{eq:int-by-parts} and the definition of the Leray projection (cf.~\autoref{def:Leray-projection}) let us write
\begin{equation} \label{eq:weak-strong-stokes-operator}
\langle \wt A\bu, \bv \rangle_{(\bW^{1,p^*}_{t,\sigma}(\G))' \times \bW^{1,p^*}_{t,\sigma}(\G)}
= \int_\G \nablaG \bu \cdot \nablaG \bv
= \int_\G A\bu \cdot \bv, \qquad \forall \bu \in \bW^{2,p}_{t,\sigma}(\G), \bv \in \bW^{1,p^*}_{t,\sigma}(\G),
\end{equation}
Consequently, if $\G$ is of class $C^{m+2,1}$ for some $m \in \NN \cup \{0\}$, for each $\bu \in \bW^{m+2,p}_{t,\sigma}(\G)$ and $\bf \in \bW^{m,p}_t(\G)$, it follows that: $\wt A\bu = P_\sigma \bf$ in $(\bW^{1,p^*}_{t,\sigma}(\G))'$ if and only if $A\bu = P_\sigma \bf$ a.e.~on $\G$, or equivalently $P_\sigma(\bf + \bP \divG \nablaG \bu) = \mathbf{0}$.
Invoking \autoref{def:Leray-projection} (Leray projection in $\bL^p_t(\G)$) along with \eqref{Leray-Wmp-stability}, there exists a unique $\uppi \in \WW^{m+1,p}_\#(\G)$ such that
\begin{align*}
- \bP \divG \nablaG \bu + \nablaG \uppi & = \bf, \quad \text{on $\G$},\\
\divG \bu & = 0, \quad \text{on $\G$}.
\end{align*}
Moreover, if $\bf \in \bW^{m,p}_{t,\sigma}(\G)$, then by \autoref{thm:Lp-based-regularity-tangentStokes} (higher $\LL^p$-based regularity of the tangent Stokes equations) we deduce that $A$ is a continuous bijection from $\bW^{m+2,p}_{t,\sigma}(\G)$ to $\bW^{m,p}_{t,\sigma}(\G)$ and
\begin{equation}\label{invStokesOp-contdep}
\|\bu\|_{m+2,p;\G} + \|\uppi\|_{m+1,p;\G} \lesssim \|\bf\|_{m,p;\G} = \|A \bu\|_{m,p;\G}.
\end{equation}
Conversely, the Bounded Inverse Theorem \cite[Corollary 2.7]{Brezis2011} (see also \cite[Teorema 3.4]{Gatica2024}) implies that
\begin{equation}\label{StokesOp-contdep}
\|A \bu\|_{m,p;\G} \lesssim \|\bu\|_{m+2,p;\G}.
\end{equation}
Hence, from \eqref{invStokesOp-contdep} and \eqref{StokesOp-contdep} we have that in particular for $p=2$:
if $m+2 = 2\ell$ is even, then
\begin{equation}\label{equivalent-mp2-even}
(\bu, \bv) \mapsto \int_\G (\underbrace{A \dotsc A}_{\text{$\ell$ times}} \bu) \cdot (\underbrace{A \dotsc A}_{\text{$\ell$ times}} \bv)
\end{equation}
defines an equivalent inner product in $\bH^{m+2}_{t,\sigma}(\G)$.
On the other hand, if $m+2 = 2\ell+1$ is odd we also use \autoref{lem:Poincare} (Poincaré inequality) to deduce that
\begin{equation}\label{equivalent-mp2-odd}
(\bu, \bv) \mapsto \int_\G \nablaG (\underbrace{A \dotsc A}_{\text{$\ell$ times}} \bu) : \nablaG (\underbrace{A \dotsc A}_{\text{$\ell$ times}} \bv)
\end{equation}
defines an equivalent inner product in $\bH^{m+2}_{t,\sigma}(\G)$;
we refer to \cite[Section 7.3]{Brezis2011} for similar constructions.
The following result shows that, provided $\G$ is sufficiently regular, the eigenpairs $\{(\bv_n,\omega_n)\}_{n \in \NN}$ provided by \autoref{thm:Stokes-eigen-minreg} enjoy higher regularity and orthogonality with respect to the inner products \eqref{equivalent-mp2-even} and \eqref{equivalent-mp2-odd}.

\begin{thm}[higher regularity of the eigenfunctions of the Stokes operator]\label{thm:Stokes-eigen-highreg}
Assume $\G$ is of class $C^{m+2,1}$ for some nonnegative integer $m$.
Then, the family of eigenpairs $\{(\bv_n,\omega_n)\}_{n \in \NN}$ provided by \autoref{thm:Stokes-eigen-minreg} satisfy
\begin{enumerate}
\item\label{it:higherSobolevreg} $\{\bv_n\}_{n \in \NN} \subseteq \bigcap_{2 \leq p<\infty} \bW^{m+2,p}_{t,\sigma}(\G)$.
\item\label{it:eigen-equation} For each $n \in \NN$, $(\bv_n,\omega_n)$ is an eigenpair of $A$:
\begin{equation*}
A \bv_n = \omega_n \bv_n, \qquad \text{a.e.~on $\G$}.
\end{equation*}
\item\label{it:higherSobolev-basis}
$\{\omega_n^{-(m+2)/2} \bv_n\}_{n \in \NN}$ is an orthonormal basis of $\bH^{m+2}_{t,\sigma}(\G)$ with respect to the equivalent inner product \eqref{equivalent-mp2-even} or \eqref{equivalent-mp2-odd}, depending on the parity of $m+2$.
\end{enumerate}
\begin{proof}
Fix $p \in (2,\infty)$ and notice from \autoref{thm:Stokes-eigen-minreg} that each $\bv_n \in \bW^{1,p}_{t,\sigma}(\G)$ satisfies \eqref{eq:weak-Stokes-eigfun}, namely
\begin{equation*}
\int_\G \nablaG \bv_n : \nablaG \bw = \omega_n \int_\G \bv_n \cdot \bw, \qquad \forall \bw \in \bW^{1,p^*}_{t,\sigma}(\G).
\end{equation*}
Hence, proceeding similarly to the proof of \autoref{prop:non-Hilbert-BL-divergencefree-uniqueness} (injectivity of the adjoint operator) we can utilize \autoref{prop:Helmholtzdecomposition} (Helmholtz--Weyl decomposition on $\G$) to show that there exists a unique $\uppi_n \in \LL^p_\#(\G)$ such that
\begin{equation*}
\int_\G \nablaG \bv_n : \nablaG \bw - \int_\G \uppi_n \divG \bw = \omega_n \int_\G \bv_n \cdot \bw, \qquad \forall \bw \in \bW^{1,p^*}_t(\G).
\end{equation*}
Consequently, by iterating \autoref{thm:Lp-based-regularity-tangentStokes} (higher $\LL^p$-based regularity of the tangent Stokes equations) as many times as the regularity of $\G$ permits, we deduce that $\bv_n \in \bW^{m+2,p}_{t,\sigma}(\G)$.
Moreover, integration-by-parts formula \eqref{eq:int-by-parts} and properties of the Leray projection (cf.~\autoref{def:Leray-projection}) yield $A\bv_n := - P_\sigma (\bP \divG \nablaG \bv_n) = \omega_n \bv_n$ a.e.~on $\G$.
This proves \autoref{it:higherSobolevreg} and \autoref{it:eigen-equation}.

The proof of \autoref{it:higherSobolev-basis} follows similarly to the proof of \autoref{it:H^1-basis} in \autoref{thm:Stokes-eigen-minreg}, by also using the bijectivity of $A$ and the first equality of \eqref{eigen-H1-relation} if $m+2$ is odd.
We omit further details.
\end{proof}
\end{thm}

\section{Tangent Navier--Stokes equations} \label{sec:ns}
In this section, we utilize the well-posedness, regularity and spectral theory for the \eqref{eq:manifold-stokes} developed in \autoref{sec:stokes}, in conjunction with the Fredholm Alternative, to derive a corresponding well-posedness and regularity theory for the tangent Navier--Stokes equations \eqref{eq:manifold-Navier--Stokes}.
More precisely, given $p \in [p_0,\infty)$ ($p_0$ to be determined later), we are interested in the following variational formulation associated to \eqref{eq:manifold-Navier--Stokes}:
Given $\bf \in (\bW^{1,p^*}_t(\G))'$ and $g \in \LL^p_\#(\G)$, find $(\bu,\uppi) \in \bW^{1,p}_t(\G) \times \LL_\#^p(\G)$ such that
\begin{equation}\label{eq:weak-manifold-Navier--Stokes}
\begin{aligned}
\int_\G \nablaG \bu : \nablaG \bv
+ \int_\G (\nablaG \bu)\bu \cdot \bv - \int_\G \uppi \divG \bv & = \langle \bf , \bv \rangle
, \quad & \forall\, \bv \in \bW^{1,p^*}_t(\G),\\
- \int_\G q \divG \bu & = -\int_\G g \, q, \quad & \forall\, q \in \LL^{p^*}_\#(\G).
\end{aligned}
\end{equation}
In contrast with \eqref{eq:manifold-stokes}, the presence of the nonlinear term $(\nablaG \bu)\bu$ in \eqref{eq:manifold-Navier--Stokes} makes it necessary to restrict the range of $p$ to ensure that the integral $\int_\G (\nablaG \bu)\bu \cdot \bv$ makes sense, as it also happens in the flat case; see, e.g., \cite{BjorlandEtAl:2011}.
We now determine such range by first invoking H\"older's inequality, yielding $\int_\G |(\nablaG \bu)\overline\bu \cdot \bv| \leq \|\nablaG \bu\|_{0,p;\G} \|\overline\bu \, \bv^T\|_{0,p^*;\G}$. 
It remains to make sense of the latter for each $\overline{\bu} \in \bW^{1,p}_t(\G)$ and $\bv \in \bW^{1,p^*}_t(\G)$.
We argue first for $d=2$.
H\"older's inequality and Sobolev embeddings (cf.~\autoref{sec:difgeo-Sobspace}) imply
\begin{align*}
\|\overline{\bu} \, \bv^T\|_{0,p^*;\G}
& \leq
\begin{cases}
\|\overline{\bu}\|_{0,4;\G} \|\bv\|_{0,4;\G}, & \text{if $p=2=p^*$},\\
\|\overline{\bu}\|_{0,\G} \|\bv\|_{0,\frac{2p^*}{2-p^*};\G}, & \text{if $p>2>p^*$},\\
\|\overline{\bu}\|_{0,\frac{2p}{2-p};\G} \|\bv\|_{0,\frac{2p}{3p-4};\G}, & \text{if $\frac{4}{3}<p<2<p^*$},\\
\|\overline{\bu}\|_{0,4;\G} \|\bv\|_{0,\infty;\G}, & \text{if $p=\frac{4}{3}$ (and so $p^*=4$)}
\end{cases}\\
& \lesssim \|\overline\bu\|_{1,p;\G} \|\bv\|_{1,p^*;\G},
\qquad \forall \overline{\bu} \in \bW^{1,p}_t(\G), \bv \in \bW^{1,p^*}_t(\G),
\end{align*}
as long as $p\geq\frac{4}{3}$.
Conversely, the limiting bound $\|\overline{\bu} \, \bv^T\|_{0,4;\G} \leq \|\overline{\bu}\|_{0,4;\G} \|\bv\|_{0,\infty;\G}$ for $p=\frac{4}{3}$ above makes it apparent that $\|\overline{\bu} \, \bv^T\|_{0,p^*;\G}$ cannot be controlled if $p<\frac{4}{3}$.

For $d \geq 3$, we proceed differently.
If $d >p^*$, then H\"older's inequality yields $\|\overline\bu \, \bv^T\|_{0,p^*;\G} \leq \|\overline\bu\|_{0,d;\G} \|\bv\|_{0,\frac{d p^*}{d-p^*};\G}$.
If we further assume that $p \geq \frac{d}{2}$ (which actually implies $d>p^*$), then a Sobolev embedding lets us write
\begin{equation*}
\|\overline\bu \, \bv^T\|_{0,p^*;\G}
\lesssim \|\overline\bu\|_{1,p;\G} \|\bv\|_{1,p^*;\G},
\qquad \forall \overline{\bu} \in \bW^{1,p}_t(\G), \bv \in \bW^{1,p^*}_t(\G).
\end{equation*}
On the other hand, if $3 \leq d \leq p^*$ (which implies $d>p$), then it is easy to see that $\frac{dp}{d-p} < p^*$ and so it is impossible to exploit H\"older's inequality in conjunction with the embedding $\bW^{1,p}(\G) \hookrightarrow \bL^{\frac{dp}{d-p}}$ to control $\|\overline\bu \, \bv^T\|_{0,p^*;\G}$.

Consequently, we have deduced that if
\begin{equation}\label{eq:range-for-p}
p \in
\begin{cases}
[\frac{4}{3},\infty) & \text{if $d=2$},\\
[\frac{d}{2},\infty) & \text{if $d \geq 3$},	
\end{cases}
\end{equation}
then
\begin{equation}\label{convection-estimate}
\int_\G |(\nablaG \bu)\overline{\bu} \cdot \bv|
\leq \|\nablaG \bu\|_{0,p;\G} \|\overline\bu \, \bv^T\|_{0,p^*;\G}
\leq C_{\operatorname{conv}} \|\bu\|_{1,p;\G} \|\overline\bu\|_{1,p;\G} \|\bv\|_{1,p^*;\G},
\end{equation}
for each $\bu, \overline{\bu} \in \bW^{1,p}_t(\G)$ and $\bv \in \bW^{1,p^*}_t(\G)$.

\subsection{Tangent Oseen equations}\label{sec:Oseen}

In this section, we establish the well-posedness of the tangent Oseen equations following ideas from \cite[\textsection 6]{DindosMitrea:2004}.
Formally speaking, the tangent Oseen equations read: given an incompressible velocity field $\overline\bu$, we seek a tangential velocity field $\bu: \G \rightarrow \RR^{d+1}$ and a pressure field $\uppi: \G \rightarrow \RR$ such that
\begin{equation}\label{eq:manifold-Oseen}
\begin{aligned}
- \DeltaB \bu + (\nablaG \bu) \overline\bu + \nablaG \uppi &= \bf,\\
\divG \bu & = 0.
\end{aligned}
\end{equation}
System \eqref{eq:manifold-Oseen} is interesting as it can be regarded as a more physically accurate linearization of the incompressible tangent Navier--Stokes equations \eqref{eq:manifold-Navier--Stokes} than the incompressible tangent Stokes equations \eqref{eq:manifold-stokes} by introducing partial convection in terms of an a-priori approximation $\overline\bu$ of the unknown velocity field $\bu$.
It is also worth mentioning that in the context of numerical methods for Navier--Stokes in flat domains, Oseen-type equations naturally arise as a result of performing semi-implicit discretizations in time of the convective term $(\nabla \bu)\bu$.

As in flat domains, the analysis of the tangent Oseen equations hinges on the ``skew-symmetry'' property enjoyed by the convective term $(\nablaG \bu)\overline\bu$ provided $\overline\bu$ is $\divG$-free.
We now make this statement more precise.
Suppose $\G$ is of class $C^2$, let $r \in (1,\infty)$ and $\overline\bu \in \bL^r_{t,\sigma}(\G)$.
Then, for each $\bu,\bv \in \bC^1_t(\G)$, since $\divG \overline\bu = 0$, definition \eqref{eq:def-distr-divergence} with $\bv$ replaced by $\overline\bu$ and $q$ replaced by $\bu \cdot \bv$ yields
\begin{equation}\label{eq:PROTO-skew-symmetry-convective-smoothfunctions}
0
= \int_\G \overline\bu \cdot \nablaG(\bu \cdot \bv)
= \int_\G \overline\bu \cdot ((\nablaG\bu)^T \bv + (\nablaG\bv)^T \bu),
\end{equation}
and therefore
\begin{equation}\label{eq:skew-symmetry-convective-smoothfunctions}
\int_\G (\nablaG \bu)\overline\bu \cdot \bv = - \int_\G (\nablaG \bv)\overline\bu \cdot \bu, \qquad \bu,\bv \in \bC^1_t(\G).
\end{equation}
That is, by fixing $\overline\bu$, the bilinear form $(\bu,\bv) \mapsto \int_\G (\nablaG \bu)\overline\bu \cdot \bv$ is skew-symmetric.
In general, identity \eqref{eq:skew-symmetry-convective-smoothfunctions} can be extended to $\bu$ and $\bv$ lying in appropriate Sobolev spaces such that $\bu \cdot \bv \in \WW^{1,r^*}(\G)$.
The following preliminary result makes this last observation rigorous and shall prove essential in establishing the well-posedness of an appropriate weak formulation for problem \eqref{eq:manifold-Oseen}.

\begin{proposition}[linearized convective operator]\label{prop:convective-compact}
Let $p \in (1,\infty)$ and assume $\G$ is of class $C^2$.
For each $\varepsilon>0$ define
\begin{equation}\label{def:integ-ubar}
r=r(\varepsilon,p,d) := \begin{cases}
2+\varepsilon, & \text{if $p=d=2$},\\
d, & \text{otherwise}.
\end{cases}
\end{equation}
Then, for each $\overline\bu \in \bL^{r}_{t,\sigma}(\G)$ the linear operator $K_{\overline\bu}: \bW^{1,p}_t(\G) \rightarrow (\bW^{1,p^*}_t(\G))'$ given by
\begin{equation}\label{def:K-operator}
\langle K_{\overline\bu}(\bu),\bv \rangle
:=
\begin{dcases}
\int_\G (\nablaG \bu)\overline\bu \cdot \bv, & \text{if $p^*<d$ or $p=d=2$},\\
- \int_\G (\nablaG \bv)\overline\bu \cdot \bu, & \text{if $p<d$ or $p=d=2$},
\end{dcases}
\end{equation}
is well-defined, continuous and compact.
Moreover, the mapping $\overline\bu \mapsto K_{\overline\bu}$ is linear and continuous from $\bL^r_{t,\sigma}(\G)$ to $\sL(\bW^{1,p}_t(\G),(\bW^{1,p}_t(\G))')$ with bound
\begin{equation}\label{K-normbound}
\|K_{\overline\bu}\|_{\sL(\bW^{1,p}_t(\G),(\bW^{1,p}_t(\G))')} \leq C \|\overline\bu\|_{0,r;\G},
\end{equation}
where $C>0$ depends only on $\G$, $p$ and, if $p=d=2$, also on $\varepsilon$.

\begin{proof}
In view of the density of $\bC^1_t(\G)$ in both $\bW^{1,p}_t(\G)$ and $\bW^{1,p^*}_t(\G)$ (cf.~\autoref{lem:density-tangential}) it is enough to first consider $\bu,\bv \in \bC^1_t(\G)$.
For this pair of functions the integrals in \eqref{def:K-operator} are clearly well-defined and, on account of \eqref{eq:skew-symmetry-convective-smoothfunctions}, coincide.
Consequently, H\"older's inequality and \autoref{prop:GNG-embeddings} (Gagliardo--Nirenberg--Sobolev)  yields
\begin{equation}\label{estimate-I}
\begin{aligned}
|\langle K_{\overline\bu}(\bu),\bv \rangle|
& \leq \begin{cases}
\|\nablaG\bv\|_{0,p^*;\G} \|\overline\bu\|_{0,d;\G} \|\bu\|_{0,\frac{dp}{d-p};\G}, & \text{if $p<d$},\\
\|\nablaG\bu\|_{0,p;\G} \|\overline\bu\|_{0,d;\G} \|\bv\|_{0,\frac{dp^*}{d-p^*};\G}, & \text{if $p^*<d$},\\
\|\nablaG\bv\|_{0,\G} \|\overline\bu\|_{0,2+\varepsilon;\G} \|\bu\|_{0,q(\varepsilon);\G} , & \text{if $p=p^*=d=2$},
\end{cases}\\
& \leq C \|\bu\|_{1,p;\G} \|\overline\bu\|_{0,r;\G} \|\bv\|_{1,p^*;\G},
\end{aligned}
\end{equation}
where $q =q(\varepsilon) \in (1,\infty)$ is such that $\frac{1}{2} + \frac{1}{2+\varepsilon} + \frac{1}{q} = 1$.
Notice that if both conditions $p<d$ and $p^*<d$ are met, then H\"older's inequality and Sobolev embeddings $\bW^{1,p}(\G) \hookrightarrow \bL^{\frac{dp}{d-p}}(\G)$ and $\bW^{1,p^*}(\G) \hookrightarrow \bL^{\frac{dp^*}{d-p^*}}(\G)$ imply that $\bu \cdot \bv \in \WW^{1,d^*}(\G)$ and so
the aforementioned density of $\bC^1_t(\G)$ lets us extend the validity of identity \eqref{eq:skew-symmetry-convective-smoothfunctions} for $r=d \geq 3$ to $\bu \in \bW^{1,p}_t(\G)$ and $\bv \in \bW^{1,p^*}_t(\G)$.
Similarly, if $p=d=2$, we can extend identity \eqref{eq:skew-symmetry-convective-smoothfunctions} for $r=2+\varepsilon$ to $\bu,\bv \in \bH^1_t(\G)$ upon utilizing the Sobolev embedding $\bH^1(\G) \hookrightarrow \bL^q(\G)$ for each $q \in (1,\infty)$.

This fact, in conjunction with the aforementioned density result, the linearity of $\langle K_{\overline\bu}(\bu),\bv \rangle$ in each of its arguments and estimate \eqref{estimate-I} imply that $K_{\overline\bu}$ is well-defined and satisfies
\begin{equation*}
|\langle K_{\overline\bu}(\bu),\bv \rangle| \leq C \|\bu\|_{1,p;\G} \|\overline\bu\|_{0,r;\G} \|\bv\|_{1,p^*;\G}, \qquad \forall \bu \in \bW^{1,p}_t(\G), \, \bv \in \bW^{1,p^*}_t(\G),
\end{equation*}
that is, $K_{\overline\bu}$ is continuous and
\begin{equation*}
\|K_{\overline\bu}\|_{\sL(\bW^{1,p}_t(\G),(\bW^{1,p}_t(\G))')} \leq C \|\overline\bu\|_{0,r;\G},
\end{equation*}
which is precisely \eqref{K-normbound}.

Regarding the compactness of $K_{\overline\bu}$, we first observe that in the case $p=p^*=d=2$, estimate \eqref{estimate-I} let us write
\begin{equation*}
\|K_{\overline\bu}(\bu)\|_{(\bH^1_t(\G))'} \leq \|\overline\bu\|_{0,2+\varepsilon;\G} \|\bu\|_{0,q(\varepsilon);\G}.
\end{equation*}
This, together with the compact embedding $\bH^1(\G) \hookrightarrow \bL^{q(\varepsilon)}(\G)$, implies that $K_{\overline\bu}$ is indeed compact.

For the other cases, we proceed via an approximation argument.
Using the density of $\bC^1_t(\G)$ in $\bL^d_t(\G)$ we first construct a sequence $\{\bvarphi_n\} \subseteq \bC^1_t(\G)$ such that $\lim_{n\to\infty}\|\bvarphi_n-\overline\bu\|_{0,d;\G} = 0$.
Hence, by \autoref{prop:Helmholtzdecomposition} (Helmholtz--Weyl decomposition on $\G$) we have that $\overline\bu_n := P_\sigma \bvarphi_n = \bvarphi_n - \nablaG \DeltaG^{-1} \divG \bvarphi_n \in \bW^{1,p}_{t,\sigma}(\G)$ for each $p \in (1,\infty)$ and
\begin{equation*}
\|\overline\bu_n - \overline\bu\|_{0,d;\G}
= \|P_\sigma\bvarphi_n - \overline\bu\|_{0,d;\G}
= \|P_\sigma(\bvarphi_n - \overline\bu)\|_{0,d;\G}
\lesssim \|\bvarphi_n - \overline\bu\|_{0,d;\G}
\xrightarrow{n\to\infty} 0.
\end{equation*}
Consequently, for each $n \in \NN$, we have that
\begin{equation*}
\|K_{\overline\bu_n} - K_{\overline\bu}\|_{\sL(\bW^{1,p}_t(\G),(\bW^{1,p^*}_t(\G))')} \leq C \|\overline\bu_n - \overline\bu\|_{0,d;\G} \xrightarrow{n\to\infty} 0,
\end{equation*}
that is, $K_{\overline\bu_n} \to K_{\overline\bu}$ in $\sL(\bW^{1,p}_t(\G),(\bW^{1,p^*}_t(\G))')$.
Given the higher integrability of $\overline\bu_n$ we can easily rewrite estimate \eqref{estimate-I}, thus obtaining
\begin{equation*}
\begin{cases}
\|K_{\overline\bu_n}(\bu)\|_{(\bW^{1,p^*}_t(\G))'} \leq \|\overline\bu_n\|_{0,d+\alpha;\G} \|\bu\|_{0,\frac{dp}{d-p}-\eta_1}, & \text{if $p<d$},\\
\|K_{\overline\bu_n}^\mathrm{t}(\bv)\|_{(\bW^{1,p}_t(\G))'} \leq \|\overline\bu_n\|_{0,d+\alpha;\G} \|\bv\|_{0,\frac{dp^*}{d-p^*}-\eta_2}, & \text{if $p^*<d$},\\
\end{cases}
\end{equation*}
where $\alpha > 0$ is fixed but arbitrary, and $\eta_1, \eta_2>0$ depend only on $\alpha$.
In this way, using \autoref{prop:Rellich-Kondrachov} (compact Sobolev embedding) and \cite[Theorem 6.4]{Brezis2011}, we deduce that each $K_{\overline\bu_n}$ is compact.
Since the space of compact operators is closed \cite[Theorem 6.1]{Brezis2011}
and $K_{\overline\bu_n} \to K_{\overline\bu}$ in $\sL(\bW^{1,p}_t(\G),(\bW^{1,p^*}_t(\G))')$, we conclude that $K_{\overline\bu}$ is compact.
\end{proof}
\end{proposition}

We now establish the well-posedness of the Oseen equations.

\begin{thm}[well-posedness of the Oseen equations]\label{thm:Oseen-wellposedness}
Let $p \in (1,\infty)$ and assume $\G$ is of class $C^2$.
Let $r = r(\varepsilon,p,d)$ be as in \eqref{def:integ-ubar}.
Then, for each $\overline\bu \in \bL^r_{t,\sigma}(\G)$ and $\bf \in (\bW^{1,p^*}_t(\G))'$ there exists a unique pair $(\bu,\uppi) \in \bW^{1,p}_t(\G) \times \LL^p_\#(\G)$ such that
\begin{equation*}
\begin{aligned}
\int_\G \nablaG \bu : \nablaG \bv
+ \langle K_{\overline\bu}(\bu),\bv \rangle
- \int_\G \uppi \divG \bv & = \langle \bf , \bv \rangle
, \quad & \forall\, \bv \in \bW^{1,p^*}_t(\G),\\
- \int_\G q \divG \bu & = 0, \quad & \forall\, q \in \LL^{p^*}_\#(\G).
\end{aligned}
\end{equation*}
Moreover, there exists $C_{\overline\bu} > 0$, depending only on $\overline\bu$, $p$, $\G$ and, if $p=d=2$, also on $\varepsilon$, such that
\begin{equation}\label{linearized-NS-generalestimate}
\|\bu\|_{1,p;\G} + \|\uppi\|_{0,p;\G} \leq C_{\overline\bu} \|\bf\|_{(\bW^{1,p^*}_t(\G))'}.
\end{equation}
Finally, if $p = 2$, we have the following $\LL^2$-based estimates
\begin{equation}\label{linearized-NS-L2based-estimate}
\|\bu\|_{1,\G} \leq C_1 \|\bf\|_{(\bH^1_t(\G))'}, \qquad
\|\uppi\|_{0,\G} \leq C_2 \|\bf\|_{(\bH^1_t(\G))'},
\end{equation}
where $C_1 >0$ depends only on $\G$ and $C_2>0$ depends only on $\G$, affinely on $\|\overline\bu\|_{0,r;\G}$ and, if $p=d=2$, also on $\varepsilon$.

\begin{proof}
We first establish the well-posedness of the following reduced problem:
given $\bf \in (\bW^{1,p^*}_{t,\sigma}(\G))'$, find $\bu \in \bW^{1,p}_{t,\sigma}(\G)$ such that
\begin{equation}\label{linearized-NS-in-kernel}
\int_\G \nablaG \bu : \nablaG \bv
+ \langle K_{\overline\bu}(\bu),\bv \rangle
= \langle \bf , \bv \rangle, \qquad \forall \bv \in \bW^{1,p^*}_{t,\sigma}(\G).
\end{equation}
We observe that upon defining the linear and bounded operators $L,\wt K: \bW^{1,p}_{t,\sigma}(\G) \rightarrow (\bW^{1,p^*}_{t,\sigma}(\G))'$ by
\begin{equation*}
\langle L(\bu),\bv \rangle := \int_\G \nablaG \bu : \nablaG \bv,
\qquad\qquad
\langle \wt K(\bu),\bv \rangle
\stackrel{\eqref{def:K-operator}}{:=} \langle K_{\overline\bu}(\bu),\bv \rangle,
\end{equation*}
we can equivalently express \eqref{linearized-NS-in-kernel} as:
find $\bu \in \bW^{1,p}_{t,\sigma}(\G)$ such that
\begin{equation*}
(L+\wt K)\bu = \bf.
\end{equation*}
Applying \autoref{prop:non-Hilbert-BL-divergencefree-uniqueness} (injectivity of the adjoint operator) and \autoref{prop:infsup-BochnerLaplacian-nonHilbert} (surjectivity of the adjoint operator) to $L$, along with \cite[Theorem~A.2]{BenavidesNochettoShakipov2025-a} (Banach--Ne\v{c}as--Babu\v{s}ka theorem) yields the invertibility of $L$.
In turn, noticing that $\wt K = \iota^*_{p^*} \circ K_{\overline\bu} \circ \iota_p$, where $\iota_p: \bW^{1,p}_{t,\sigma}(\G) \to \bW^{1,p}_t(\G)$ and $\iota^*_{p^*}: (\bW^{1,p^*}_t(\G))' \to (\bW^{1,p^*}_{t,\sigma}(\G))'$ are the canonical injections, we deduce from \autoref{prop:convective-compact} (linearized convective operator) that $\wt K$ is compact.

Since $L$ is bijective and $\wt K$ is compact, we deduce from the Fredholm's alternative for reflexive Banach spaces \cite[Theorem 6.6]{Brezis2011} (see also \cite[Teorema 6.9]{Gatica2024}) that $L+\wt K$ has closed range and $\dim \operatorname{ker}(L+\wt K) = \dim \operatorname{ker}((L+\wt K)^\mathrm{t}) < \infty$ (cf.~\eqref{Banach-duality} in \autoref{app:var-prob}).
We also know that it suffices to prove that $\ker(L+\wt K) = \{\mathbf{0}\}$ or $\ker(L+\wt K)^\mathrm{t} = \{\mathbf{0}\}$ in order to conclude that $L+\wt K$ is invertible.

If $p \geq 2$, let $\bu \in \ker(L+\wt K)$;
that is,
\begin{equation*}
\int_\G \nablaG \bu : \nablaG \bv
+ \langle K_{\overline\bu}(\bu),\bv \rangle
= 0, \qquad \forall \bv \in \bW^{1,p^*}_{t,\sigma}(\G).
\end{equation*}
Choosing $\bv = \bu \in \bW^{1,p}_{t,\sigma}(\G) \subseteq \bH^1_{t,\sigma}(\G) \subseteq \bW^{1,p^*}_{t,\sigma}(\G)$ and using that $\langle K_{\overline\bu}(\bu),\bu \rangle = \int_\G (\nablaG \bu)\overline{\bu} \cdot \bu = 0$, we obtain $\|\nablaG \bu\|_{0,\G} = 0$.
Thus, by \autoref{lem:Poincare} (Poincaré inequality), $\bu = \mathbf{0}$.

If $p < 2$, let $\bv \in \ker((L+\wt K)^\mathrm{t})$;
that is
\begin{equation*}
\langle (L+\wt K)^\mathrm{t} \bv, \bu \rangle = \langle (L+\wt K) \bu, \bv \rangle = \int_\G \nablaG \bu : \nablaG \bv
+ \langle K_{\overline\bu}(\bu),\bv \rangle
= 0, \qquad \forall \bu \in \bW^{1,p}_{t,\sigma}(\G).
\end{equation*}
Since $p^* > 2 > p$, we choose $\bu = \bv \in \bW^{1,p^*}_{t,\sigma}(\G) \subseteq \bH^1_{t,\sigma}(\G) \subseteq \bW^{1,p}_{t,\sigma}(\G)$ to arrive at $\|\nablaG \bv\|_{0,\G} = 0$, and so $\bv = \mathbf{0}$.

Overall, for $1<p<\infty$, we have proved that $\dim \operatorname{ker}(L+\wt K) = \dim \operatorname{ker}((L+\wt K)^\mathrm{t}) = 0$ and so $L+\wt K$ is bijective.
This proves the unique solvability of \eqref{linearized-NS-in-kernel}.
Moreover, a direct application of the Bounded Inverse Theorem \cite[Corollary 2.7]{Brezis2011} yields the existence of a positive constant $\wt C_{\overline\bu}$ depending only $\G$ and $\overline\bu$ such that
\begin{equation}\label{eq:linearized-NS-gen-vel-est}
\|\bu\|_{1,p;\G} \leq \wt C_{\overline\bu} \|\bf\|_{(\bW^{1,p^*}_{t,\sigma}(\G))'}.
\end{equation}
We consider now $\bf \in (\bW^{1,p^*}_t(\G))'$ and deal with the pressure field $\uppi$.
We claim that there exists a unique $\uppi \in \LL^p_\#(\G)$ such that
\begin{equation}\label{eq:finding-pressure-existenceNS}
\int_\G \nablaG \bu : \nablaG \bv
+ \langle K_{\overline\bu}(\bu),\bv \rangle
- \int_\G \uppi \divG \bv
= \langle \bf , \bv \rangle
, \quad \forall\, \bv \in \bW^{1,p^*}_t(\G).
\end{equation}
Proceeding as in the proof of \autoref{prop:non-Hilbert-BL-divergencefree-uniqueness}, we notice that because of \autoref{prop:Helmholtzdecomposition} (Helmholtz--Weyl decomposition on $\G$), \eqref{eq:finding-pressure-existenceNS} is equivalent to
\begin{equation}\label{eq:recover-pressure-NS}
- \int_\G \uppi \, \DeltaG \phi
= \langle \bf , \nablaG \phi \rangle - \int_\G \nablaG \bu : \nablaG \nablaG \phi - \langle K_{\overline\bu}(\bu), \nablaG\phi \rangle
=: \langle h , \phi \rangle
, \qquad \forall \phi \in \WW^{2,p^*}_\#(\G).
\end{equation}
Consequently, the existence of a unique $\uppi \in \LL^p_\#(\G)$ follows from \autoref{lem:ultraweak-allp-LB} (ultra-weak solutions of the Laplace--Beltrami operator on $\G$) with right-hand side datum $h \in (\WW^{2,p^*}_\#(\G))'$.
Moreover, the following a-priori bound holds true
\begin{equation}\label{Oseen-pressure-bound}
\begin{split}
    \|\uppi\|_{0,p;\G}
    & \lesssim \|h\|_{(\WW^{2,p^*}_\#(\G))'}
    \lesssim \|\bf\|_{(\bW^{1,p^*}_t(\G))'} + \|\nablaG\bu\|_{0,p;\G} + \|K_{\overline\bu}(\bu)\|_{(\bW^{1,p^*}_t(\G))'} \\
    & \stackrel{\eqref{K-normbound}}{\lesssim} \|\bf\|_{(\bW^{1,p^*}_t(\G))'} + (1+\|\overline\bu\|_{0,r;\G}) \|\bu\|_{1,p;\G}.
\end{split}
\end{equation}
Combining \eqref{Oseen-pressure-bound} with \eqref{eq:linearized-NS-gen-vel-est} results in \eqref{linearized-NS-generalestimate}.

Finally, if $p = 2$ we choose $\bv = \bu$ in \eqref{linearized-NS-in-kernel} and notice that $\langle K_{\overline\bu}(\bu),\bu \rangle = 0$, to obtain
\begin{equation*}
\|\nablaG\bu\|_{0,\G}^2 = \|\nablaG\bu\|_{0,\G}^2 + \int_\G (\nablaG \bu)\overline\bu \cdot \bu  = \langle \bf , \bu \rangle \leq \|\bf\|_{(\bH^1_t(\G))'} \|\bu\|_{1,\G},
\end{equation*}
which combined with \autoref{lem:Poincare} (Poincaré inequality) for $p=2$, yields the estimate for $\bu$ in \eqref{linearized-NS-L2based-estimate}.
The estimate for $\uppi$ in \eqref{linearized-NS-L2based-estimate} follows from that of $\bu$ and \eqref{Oseen-pressure-bound}.
\end{proof}
\end{thm}

\begin{remark}[dependence on $\overline\bu$]\label{rem:Oseen-dep-bar-u}
We find it important to mention that the specific structure of the dependence on $\overline\bu$ of the constant $C_{\overline\bu}$ appearing in \eqref{linearized-NS-generalestimate} is unknown.
This is due to the fact that the Fredholm Alternative \cite[Theorem 6.6]{Brezis2011} only yields a statement about invertibility and the Bounded Inverse Theorem \cite[Corollary 2.7]{Brezis2011} does not provide an explicit continuity constant.
\end{remark}

\subsection{Existence of solutions of the incompressible tangent Navier--Stokes equations}\label{sec:existence-incompressibleNS-d234}

In this section we establish the existence of solutions to the tangent incompressible Navier--Stokes equations \eqref{eq:weak-manifold-Navier--Stokes} (i.e.~with $g=0$) for dimensions $d \in \{2,3,4\}$ and $p \geq 2$;
see \autoref{thm:existence-NS-d234} below.
For the case $p=2$ ($\LL^2$-based) we utilize the standard Faedo--Galerkin method based upon eigenfunctions of the Stokes operator (cf.~\autoref{sec:Stokes-operator}) to recover a velocity field $\bu$.
Given the lack of boundary of $\G$, we recover the pressure field $\uppi$ in terms of $\bu$ and $\bf$ by solving a Laplace--Beltrami problem for $\uppi$.
Moreover, the case $p>2$ follows from the case $p=2$ in conjunction with the well-posedness of the tangent Oseen equations (cf.~\autoref{thm:Oseen-wellposedness}).

\begin{proposition}[existence of a velocity field]\label{prop:NS-kernel}
Let $d \in \{2,3,4\}$ and assume $\G$ is of class $C^2$.
Then, for each $\bf \in (\bH^1_{t,\sigma}(\G))'$ there exists $\bu \in \bH^1_{t,\sigma}(\G)$ such that
\begin{equation}\label{eq:NS-kernel}
\int_\G \nablaG\bu : \nablaG \bv + \int_\G (\nablaG\bu)\bu \cdot \bv = \langle \bf,\bv \rangle, \qquad \forall \bv \in \bH^1_{t,\sigma}(\G).
\end{equation}
\begin{proof}
We proceed by a Faedo--Galerkin method.
Let $\{\bv_n\}_{n \in \NN} \subseteq \bigcap_{2 \leq p<\infty} \bW^{1,p}_{t,\sigma}(\G)$ be the family of functions provided by \autoref{thm:Stokes-eigen-minreg} (eigendecomposition of the Stokes operator).
For each fixed $n \in \NN$, we seek an approximated velocity $\bu_n \in \vspan\{\bv_i\}_{i=1}^n$ such that
\begin{equation}\label{eq:Galerkin-kernel}
\int_\G \nablaG\bu_n : \nablaG \bv_i + \int_\G (\nablaG\bu_n)\bu_n \cdot \bv_i = \langle \bf,\bv_i \rangle, \qquad \forall i \in \{1,\dotsc,n\}.
\end{equation}
\textbf{Step 1: Existence of Galerkin solutions.}
The existence of solutions of \eqref{eq:Galerkin-kernel} will follow from the abstract result \cite[p.~110, Lemma 1.4]{Temam2001}, whose hypotheses we now verify.
Let $X_n := \vspan\{\bv_i\}_{i=1}^n$ be the finite dimensional vector space endowed with the inner product $(\bu,\bv)_H := \int_\G \nablaG\bu : \nablaG\bv$ from \eqref{equiv:H1tsig-inner-prod}.
Let $P: X_n \rightarrow X_n$ be the operator defined by
\begin{equation*}
(P(\bu),\bv \rangle_H := \int_\G \nablaG\bu : \nablaG \bv + \int_\G (\nablaG\bu)\bu \cdot \bv - \langle \bf,\bv\rangle, \qquad \forall \bu,\bv \in X_n,
\end{equation*}
which is clearly continuous.
Moreover, by noticing $\int_\G (\nablaG\bu)\bu \cdot \bu = 0$ for each $\bu \in \bH^1_{t,\sigma}(\G)$ (because of identity \eqref{eq:skew-symmetry-convective-smoothfunctions} and density arguments) and resorting to \autoref{lem:Poincare} (Poincaré inequality), we have that
\begin{multline*}
( P(\bu),\bu)_H
= \int_\G \nablaG\bu : \nablaG \bu + \int_\G (\nablaG\bu)\bu \cdot \bu - \langle \bf,\bu\rangle
= \|\bu\|_H^2 - \langle \bf, \bu \rangle\\
\geq \|\bu\|_H^2 - \|\bf\|_{(\bH^1_{t,\sigma}(\G))'} \|\bu\|_{1,\G}
\geq \|\bu\|_H (\|\bu\|_H - c\|\bf\|_{(\bH^1_{t,\sigma}(\G))'}),
\qquad \forall \bu \in X_n.
\end{multline*}
Hence, it is clear that for each $K > c\|\bf\|_{(\bH^1_{t,\sigma}(\G))'}$ it holds that
\begin{equation*}
(P(\bu),\bu)_H > 0, \qquad \forall \bu \in X_n, \quad \|\bu\|_H = K.
\end{equation*}
This finishes the verification of the hypotheses of \cite[p.~110, Lemma 1.4]{Temam2001}, whence we deduce that there exists $\bu \in X_n$ such that $P(\bu) = \textbf{0}$, or equivalently, that the Galerkin system \eqref{eq:Galerkin-kernel} admits a solution $\bu_n \in X_n$.
\\ \\
\textbf{Step 2: Passage to the limit.}
Since $\bu_n \in X_n$ and \eqref{eq:Galerkin-kernel} is linear in $\bv_i$, we can take $\bv_i \gets \bu_n$ in \eqref{eq:Galerkin-kernel} and use again \eqref{eq:skew-symmetry-convective-smoothfunctions}, to arrive at
\begin{equation*}
\|\nablaG\bu_n\|_{0,\G}^2
= \|\nablaG\bu_n\|_{0,\G}^2 + \int_\G (\nablaG\bu_n)\bu_n \cdot \bu_n
= \langle \bf,\bu_n \rangle
\leq \|\bf\|_{(\bH^1_{t,\sigma}(\G))'} \|\bu_n\|_{1,\G}.
\end{equation*}
Hence, by \autoref{lem:Poincare} (Poincaré inequality), we obtain the a-priori estimate
\begin{equation*}
\|\bu_n\|_{1,\G} \leq C \|\bf\|_{(\bH^1_{t,\sigma}(\G))'}, \qquad \forall n \in \NN,
\end{equation*}
where $C > 0$ is independent of $n$.
Since the sequence of Galerkin solutions $\{\bu_n\}_{n \in \NN}$ is bounded in the Hilbert space $\bH^1_{t,\sigma}(\G)$, it follows from the the Banach--Alaoglu Theorem \cite[Theorem 3.18]{Brezis2011} and \autoref{prop:Rellich-Kondrachov} (compact Sobolev embedding) that there exists $\bu \in \bH^1_{t,\sigma}(\G)$ such that (up to a subsequence)
\begin{align}
\label{galerkin-BA} \bu_n & \rightharpoonup \bu, \qquad \text{in $\bH^1_{t,\sigma}(\G)$},\\
\label{galerkin-compact} \bu_n & \to \bu, \qquad \text{in $\bL^{4-\varepsilon}(\G)$},
\end{align}
for some arbitrary, but fixed, $\varepsilon \in (0,1)$.
Notice that $\varepsilon$ can be taken as $0$ if $d \in \{2,3\}$.
On the other hand, even for $d=4$, $\bu \in \bL^4(\G)$ because of the continuous injection $\bH^1(\G) \hookrightarrow \bL^4(\G)$.

For any $\bw \in \bH^1_{t,\sigma}(\G)$ we write $\bw = \sum_{i=1}^\infty \alpha_i \bv_i$, with the series converging in the $\bH^1(\G)$-norm.
Denoting $\bw_N := \sum_{i=1}^N \alpha_i \bv_i \in X_N$ for fixed $N \leq n$, we deduce from \eqref{eq:Galerkin-kernel} that
\begin{equation}\label{eq:Galerkin-wN}
\int_\G \nablaG\bu_n : \nablaG \bw_N + \int_\G (\nablaG\bu_n)\bu_n \cdot \bw_N = \langle \bf,\bw_N \rangle.
\end{equation}
By utilizing the weak convergence \eqref{galerkin-BA}, we can first pass to the limit as $n\to\infty$ and then as $N \to \infty$ in the first term of the left-hand side of \eqref{eq:Galerkin-wN}, thus arriving at
\begin{equation}\label{eq:Galerkin-tolimit-1}
\int_\G \nablaG\bu_n : \nablaG \bw_N
\xrightarrow{n\to\infty}
\int_\G \nablaG\bu : \nablaG \bw_N
\xrightarrow{N\to\infty}
\int_\G \nablaG\bu : \nablaG \bw.
\end{equation}
Regarding the nonlinear term in \eqref{eq:Galerkin-wN}, we first notice that using the strong convergence \eqref{galerkin-compact} it is easy to see that that $\bu_n \otimes \bu_n \to \bu \otimes \bu$  in $\bbL^{\frac{4-\varepsilon}{2}}(\G)$.
Thus, taking also into account the incompressibility condition $\divG \bu_n = 0$ and identity \eqref{eq:skew-symmetry-convective-smoothfunctions} with $\overline\bu = \bu_n$ and the fact that $\nablaG \bw_N \in \bbL^p(\G)$ for every $p \in (1,\infty)$, it first transpires that
\begin{equation}\label{eq:Galerkin-tolimit-4}
\int_\G (\nablaG\bu_n)\bu_n \cdot \bw_N
= - \int_\G (\nablaG\bw_N): \bu_n \otimes \bu_n
\xrightarrow{n\to\infty}
- \int_\G (\nablaG\bw_N): \bu \otimes \bu.
\end{equation}
Moreover, because $\bu \otimes \bu \in \bbL^2(\G)$ and $\nablaG\bw_N \to \nablaG\bw$ in $\bbL^2(\G)$ as $N\to\infty$, identity \eqref{eq:skew-symmetry-convective-smoothfunctions} with $\overline\bu = \bu$, yields
\begin{equation}\label{eq:Galerkin-tolimit-2}
- \int_\G (\nablaG\bw_N): \bu \otimes \bu
\xrightarrow{N\to\infty}
- \int_\G (\nablaG\bw): \bu \otimes \bu
= \int_\G (\nablaG\bu)\bu \cdot \bw.
\end{equation}
Finally, since $\bw_N \to \bw$ in $\bH^1_{t,\sigma}(\G)$ as $N\to\infty$, it also holds that
\begin{equation}\label{eq:Galerkin-tolimit-3}
\langle \bf,\bw_N \rangle
\xrightarrow{N\to\infty}
\langle \bf,\bw \rangle.
\end{equation}
Overall, combining \eqref{eq:Galerkin-wN}, \eqref{eq:Galerkin-tolimit-1}, \eqref{eq:Galerkin-tolimit-4}, \eqref{eq:Galerkin-tolimit-2} and \eqref{eq:Galerkin-tolimit-3} yields \eqref{eq:NS-kernel} and finishes the proof.
\end{proof}
\end{proposition}

\begin{thm}[existence of solutions of the incompressible tangent Navier--Stokes equations]\label{thm:existence-NS-d234}
Let $p \in [2,\infty)$ and assume $\G$ is of class $C^2$ and of dimension $d \in \{2,3,4\}$.
Then, for each $\bf \in (\bW^{1,p^*}_t(\G))'$ there exists a solution $(\bu,\uppi) \in \bW^{1,p}_t(\G) \times \LL^p_\#(\G)$ of \eqref{eq:weak-manifold-Navier--Stokes} with $g = 0$.
Moreover, the following a-priori bounds hold for $p=2$:
\begin{equation}\label{incompNS-L2based-ests}
\|\bu\|_{1,\G} \leq C_1 \|\bf\|_{(\bH^1_{t,\sigma}(\G))'}, \qquad \|\uppi\|_{0,\G} \leq C_2 \|\bf\|_{(\bH^1_{t,\sigma}(\G))'} (1+\|\bf\|_{(\bH^1_{t,\sigma}(\G))'}),
\end{equation}
where $C_1, C_2 > 0$ only depend on $\G$.

\begin{proof}
From \autoref{prop:NS-kernel} (existence of a velocity field) we deduce the existence of $\bu \in \bH^1_{t,\sigma}(\G)$ that satisfies \eqref{eq:NS-kernel} with the given $\bf \in (\bW^{1,p^*}_t(\G))' \subseteq (\bH^1_{t,\sigma}(\G))'$:
\begin{equation*}
\int_\G \nablaG \bu : \nablaG \bv + \int_\G (\nablaG \bu)\bu \cdot \bv = \langle \bf , \bv \rangle
, \qquad \forall \bv \in \bH^1_t(\G).
\end{equation*}
Let $r = r(\varepsilon,p,d)$ be given by \eqref{def:integ-ubar} for some fixed $\varepsilon>0$.
Then, noticing that $\bu \in \bL^r_{t,\sigma}(\G)$, because of \autoref{prop:GNG-embeddings} (Gagliardo--Nirenberg--Sobolev) and $d \leq 4$, a direct application of \autoref{thm:Oseen-wellposedness} (well-posedness of the Oseen equations) with $\overline\bu$ replaced by $\bu$ yields the existence of the pressure $\uppi \in \LL^p_\#(\G)$, and the higher regularity $\bu \in \bW^{1,p}_t(\G)$ and the fact that the pair $(\bu,\uppi) \in \bW^{1,p}_t(\G) \times \LL^p_\#(\G)$ solves \eqref{eq:weak-manifold-Navier--Stokes} with $g = 0$.
Moreover, estimates \eqref{incompNS-L2based-ests} follow from the first estimate in \eqref{linearized-NS-L2based-estimate}, and combining the case $p=2$ of estimate \eqref{Oseen-pressure-bound} with the continuous injection $\bH^1(\G) \hookrightarrow \bL^d(\G)$.
\end{proof}
\end{thm}

\begin{remark}[a-priori estimate for $p>2$]
It is worth stressing that, except for $p=2$, \autoref{thm:existence-NS-d234} does not provide an explicit bound for $\|\bu\|_{1,p;\G}$ in terms of $\|\bf\|_{(\bW^{1,p^*}_t(\G))'}$ according to \autoref{rem:Oseen-dep-bar-u} (dependence on $\overline\bu$).
The situation for the pressure $\uppi$ is different in view of estimate \eqref{Oseen-pressure-bound} with $\overline\bu = \bu$ within the proof of \autoref{thm:Oseen-wellposedness} (well-posedness of the Oseen equations) and the chain of inequalities $\|\bu\|_{0,r;\G} \lesssim \|\bu\|_{1,\G} \stackrel{\eqref{incompNS-L2based-ests}}{\lesssim} \|\bf\|_{(\bH^1_{t,\sigma}(\G))'} \lesssim \|\bf\|_{(\bW^{1,p^*}_t(\G))'}$, which reads
\begin{equation*}
\|\uppi\|_{0,p;\G}
\leq C_3 \left( \|\bf\|_{(\bW^{1,p^*}_t(\G))'} + (1+ \|\bf\|_{(\bW^{1,p^*}_t(\G))'}) \|\bu\|_{1,p;\G} \right),
\end{equation*}
where $C_3>0$ depends only on $\G$ and $p$.
\end{remark}

\begin{remark}[uniqueness of the pressure]
We find it important to mention that, in spite of \autoref{thm:existence-NS-d234} not providing a unique pair $(\bu,\uppi)$, the pressure $\uppi \in \LL^p_\#(\G)$ is uniquely determined by the velocity field $\bu$ and right-hand side $\bf$.
This is due to \eqref{eq:recover-pressure-NS} with $\overline\bu = \bu$ within the proof of \autoref{thm:Oseen-wellposedness} (well-posedness of the Oseen equations).
\end{remark}

\subsection{Well-posedness of tangent Navier--Stokes for small data}\label{sec:ns-smalldata}

In \autoref{sec:existence-incompressibleNS-d234} we proved the existence of solutions of the tangent Navier--Stokes system \eqref{eq:weak-manifold-Navier--Stokes} for $g=0$ and any datum $\bf \in (\bW^{1,p^*}_t(\G))'$ for $p \in [2,\infty)$ and dimension $d \in \{2,3,4\}$.
In this section we complement such a study by showing that for any dimension $d \geq 2$ and sufficiently small data $(\bf,g) \in (\bW^{1,p^*}_t(\G))' \times \bL^p_\#(\G)$ (with $g$ not necessarily zero and for $p$ within the range given by \eqref{eq:range-for-p}), there exists a unique solution of system \eqref{eq:weak-manifold-Navier--Stokes}.
Before stating our well-posedness result for problem \eqref{eq:weak-manifold-Navier--Stokes} we recall the following property satisfied by the variational formulation \eqref{eq:weak-manifold-stokes-nonHilbert} for the tangent Stokes equations:
since said problem defines a continuous linear isomorphism from $\bW^{1,p}_t(\G) \times \LL^p_\#(\G)$ onto $(\bW^{1,p^*}_t(\G))' \times \LL^{p^*}_\#(\G)$, \cite[Theorem~A.2]{BenavidesNochettoShakipov2025-a}
(Banach--Ne\v{c}as--Babu\v{s}ka theorem) guarantees the existence of a positive constant $C_{\operatorname{Stokes}}>0$ such that for each $(\bv,q) \in \bW^{1,p^*}_t(\G) \times \LL^{p^*}_\#(\G)$,
\begin{equation}\label{Stokes-global-inf-sup-1}
\sup_{\substack{(\bu,\uppi) \in \bW^{1,p}_t(\G) \times \LL^p_\#(\G)\\(\bu,\uppi) \neq (\mathbf{0},0)}} \frac{\int_\G \nablaG \bu : \nablaG \bv
- \int_\G \uppi \divG \bv
- \int_\G q \divG \bu
}{\|\bu\|_{1,p;\G} + \|\uppi\|_{0,p;\G}}
\geq C_{\operatorname{Stokes}} \left( \|\bv\|_{1,p^*;\G} + \|q\|_{0,p^*;\G} \right),
\end{equation}
and for each $(\bu,\uppi) \in \bW^{1,p}_t(\G) \times \LL^p_\#(\G)$,
\begin{equation}\label{Stokes-global-inf-sup-2}
\sup_{\substack{(\bv,q) \in \bW^{1,p^*}_t(\G) \times \LL^{p^*}_\#(\G)\\(\bv,q) \neq (\mathbf{0},0)}} \frac{\int_\G \nablaG \bu : \nablaG \bv
- \int_\G \uppi \divG \bv
- \int_\G q \divG \bu
}{\|\bv\|_{1,p^*;\G} + \|q\|_{0,p^*;\G}}
\geq C_{\operatorname{Stokes}} \left( \|\bu\|_{1,p;\G} + \|\uppi\|_{0,p;\G} \right).
\end{equation}
\begin{thm}[well-posedness of the tangent Navier--Stokes equations in $\bW^{1,p}_t(\G) \times \LL^p_\#(\G)$ for small data]
Let $p \in [\frac{4}{3},\infty)$ if $d=2$ or $p \in [\frac{d}{2},\infty)$ if $d \geq 3$.
Assume $\G$ is of class $C^2$.

Let $0 < R < \frac{C_{\operatorname{Stokes}}}{2 C_{\operatorname{conv}}}$, where $C_{\operatorname{conv}}$ is given in \eqref{convection-estimate} and $C_{\operatorname{Stokes}}$ in \eqref{Stokes-global-inf-sup-1} and \eqref{Stokes-global-inf-sup-2}.
Then, for each $\bf \in (\bW^{1,p^*}_t(\G))'$ and $g \in \LL^p_\#(\G)$ such that $\|\bf\|_{(\bW^{1,p^*}_t(\G))'} + \|g\|_{0,p;\G} \leq R \frac{C_{\operatorname{Stokes}}}{2}$,
there exists a unique solution $(\bu,\uppi) \in \bW^{1,p}_t(\G) \times \LL^p_\#(\G)$ of \eqref{eq:weak-manifold-Navier--Stokes}.
Moreover, the following a-priori bound holds
\begin{equation*}
\|\bu\|_{1,p;\G} + \|\uppi\|_{0,p;\G} \leq \frac{2}{C_{\operatorname{Stokes}}} \left( \|\bf\|_{(\bW^{1,p^*}_t(\G))'} + \|g\|_{0,p;\G} \right).
\end{equation*}
\begin{proof}
Set $X_R := \{ \overline{\bu} \in \bW^{1,p}_t(\G): \|\overline{\bu}\|_{1,p;\G} \leq R \}$.
For each fixed $\overline{\bu} \in X_R$ we claim that the following linear problem is well-posed:
find $(\bu,\uppi) \in \bW^{1,p}_t(\G) \times \LL^p_\#(\G)$ such that
\begin{equation}\label{eq:fixed-point-operator}
\begin{aligned}
\int_\G \nablaG \bu : \nablaG \bv
+ \int_\G (\nablaG \bu)\overline{\bu} \cdot \bv - \int_\G \uppi \divG \bv & = \langle \bf , \bv \rangle
, \quad & \forall\, \bv \in \bW^{1,p^*}_t(\G),\\
- \int_\G q \divG \bu & = -\int_\G g \, q, \quad & \forall\, q \in \LL^{p^*}_\#(\G).
\end{aligned}
\end{equation}
Indeed, by combining the global inf-sup conditions \eqref{Stokes-global-inf-sup-1}, \eqref{Stokes-global-inf-sup-2} and estimate \eqref{convection-estimate}, we easily see that
\begin{equation*}
\begin{aligned}
\sup_{\textbf{0} \neq (\bu,\uppi) \in \bW^{1,p}_t(\G) \times \LL^p_\#(\G)} & \frac{\int_\G \nablaG \bu : \nablaG \bv
+ \int_\G (\nablaG \bu)\overline{\bu} \cdot \bv
- \int_\G \uppi \divG \bv
- \int_\G q \divG \bu
}{\|\bu\|_{1,p;\G} + \|\uppi\|_{0,p;\G}}\\
& \geq C_{\operatorname{Stokes}} \left( \|\bv\|_{1,p^*;\G} + \|q\|_{0,p^*;\G} \right) - C_{\operatorname{conv}} \|\overline\bu\|_{1,p;\G} \|\bv\|_{1,p^*;\G},\\
& \geq (C_{\operatorname{Stokes}} - C_{\operatorname{conv}} \|\overline\bu\|_{1,p;\G}) \left( \|\bv\|_{1,p^*;\G} + \|q\|_{0,p^*;\G} \right)\\
& \geq (C_{\operatorname{Stokes}} - C_{\operatorname{conv}} R) \left( \|\bv\|_{1,p^*;\G} + \|q\|_{0,p^*;\G} \right)\\
& \geq \frac{C_{\operatorname{Stokes}}}{2} \left( \|\bv\|_{1,p^*;\G} + \|q\|_{0,p^*;\G} \right),
\qquad \forall (\bv,q) \in \bW^{1,p^*}_t(\G) \times \LL^{p^*}_\#(\G),
\end{aligned}
\end{equation*}
because $R < \frac{C_{\operatorname{Stokes}}}{2 C_{\operatorname{conv}}}$.
Analogously,
\begin{equation*}
\begin{aligned}
\sup_{\textbf{0} \neq (\bv,q) \in \bW^{1,p^*}_t(\G) \times \LL^{p^*}_\#(\G)} & \frac{\int_\G \nablaG \bu : \nablaG \bv
+ \int_\G (\nablaG \bu)\overline{\bu} \cdot \bv
- \int_\G \uppi \divG \bv
- \int_\G q \divG \bu
}{\|\bv\|_{1,p^*;\G} + \|q\|_{0,p^*;\G}}\\
& \geq \frac{C_{\operatorname{Stokes}}}{2} \left( \|\bu\|_{1,p;\G} + \|\uppi\|_{0,p;\G} \right),
\qquad \forall (\bu,\uppi) \in \bW^{1,p}_t(\G) \times \LL^p_\#(\G).
\end{aligned}
\end{equation*}
Consequently, by \cite[Theorem~A.2]{BenavidesNochettoShakipov2025-a}
(Banach--Ne\v{c}as--Babu\v{s}ka theorem), we deduce that the linear problem \eqref{eq:fixed-point-operator} is well-posed.
In particular, it satisfies
\begin{equation*}
\|\bu\|_{1,p;\G} + \|\uppi\|_{0,p;\G}
\leq \frac{2}{C_{\operatorname{Stokes}}} \sup_{\substack{(\bv,q) \in \bW^{1,p^*}_t(\G) \times \LL^{p^*}_\#(\G)\\(\bv,q) \neq \textbf{0}}} \frac{\langle \bf , \bv \rangle -\int_\G g \, q}{\|\bv\|_{1,p^*;\G} + \|q\|_{0,p^*;\G}}
\leq \frac{2}{C_{\operatorname{Stokes}}} \left( \|\bf\|_{(\bW^{1,p^*}_t(\G))'} + \|g\|_{0,p;\G} \right).
\end{equation*}
Moreover, if $\bf$ and $g$ satisfy $\|\bf\|_{(\bW^{1,p^*}_t(\G))'} + \|g\|_{0,p;\G} \leq R \frac{C_{\operatorname{Stokes}}}{2}$, then $\bu \in X_R$.
Under this last assumption let us define the operator $T: X_R \rightarrow X_R$ that maps each $\overline{\bu} \in X_R$ to $T(\overline{\bu}) = \bu$, where $\bu$ is the first component of the solution $(\bu,\uppi)$ of \eqref{eq:fixed-point-operator} with the given $\overline{\bu}$.
Now, for each $\overline{\bu}_1, \overline{\bu}_2 \in X_R$, let $(\bu_1,\uppi_1)$ and $(\bu_2,\uppi_2)$ be the solutions of problem \eqref{eq:fixed-point-operator} with $\overline{\bu}_1$ and $\overline{\bu}_2$, respectively.
By substracting their corresponding equations \eqref{eq:fixed-point-operator}, we deduce that $(\bu_1-\bu_2,\uppi_1-\uppi_2) \in \bW^{1,p}_t(\G) \times \LL^p_\#(\G)$ satisfy the variational formulation for the tangent Stokes equations \eqref{eq:weak-manifold-stokes-nonHilbert} with right-hand side functions $(\nablaG \bu_2) \overline{\bu}_2 - (\nablaG \bu_1) \overline{\bu}_1$ instead of $\bf$ and $0$ instead of $g$, the former understood as an element of $(\bW^{1,p^*}_t(\G))'$ by virtue of \eqref{convection-estimate}.
Consequently, the global inf-sup condition \eqref{Stokes-global-inf-sup-2} yields the following estimate
\begin{align*}
\|\bu_1-\bu_2\|_{1,p;\G} & + \|\uppi_1-\uppi_2\|_{0,p;\G}
\leq \frac{1}{C_{\operatorname{Stokes}}} \sup_{\substack{\bv \in \bW^{1,p^*}_t(\G)}} \frac{\int_\G ((\nablaG \bu_2) \overline{\bu}_2 - (\nablaG \bu_1) \overline{\bu}_1) \cdot \bv}{\|\bv\|_{1,p^*;\G} }\\
& = \frac{1}{C_{\operatorname{Stokes}}} \sup_{\substack{\bv \in \bW^{1,p^*}_t(\G)}} \frac{\int_\G ( \nablaG(\bu_2 - \bu_1) \overline{\bu}_2 + \nablaG \bu_1 (\overline{\bu}_2 - \overline{\bu}_1) ) \cdot \bv}{\|\bv\|_{1,p^*;\G} }\\
& \stackrel{\eqref{convection-estimate}}{\leq} \frac{C_{\operatorname{conv}}}{C_{\operatorname{Stokes}}} (\|\bu_1 - \bu_2\|_{1,p;\G} \|\overline\bu_2\|_{1,p;\G} + \|\bu_1\|_{1,p;\G} \|\overline\bu_1 - \overline\bu_2\|_{1,p;\G})\\
& \leq R \frac{C_{\operatorname{conv}}}{C_{\operatorname{Stokes}}} (\|\bu_1 - \bu_2\|_{1,p;\G} + \|\overline\bu_1 - \overline\bu_2\|_{1,p;\G})
\leq \frac{1}{2} \|\bu_1 - \bu_2\|_{1,p;\G} + R \frac{C_{\operatorname{conv}}}{C_{\operatorname{Stokes}}} \|\overline\bu_1 - \overline\bu_2\|_{1,p;\G};
\end{align*}
which, in particular implies
\begin{equation*}
\|T(\overline{\bu}_1)-T(\overline{\bu}_2)\|_{1,p;\G} = \|\bu_1-\bu_2\|_{1,p;\G}
\leq L \|\overline\bu_1 - \overline\bu_2\|_{1,p;\G},
\qquad \forall \overline{\bu}_1, \overline{\bu}_2 \in X_R.
\end{equation*}
with $L := 2R \frac{C_{\operatorname{conv}}}{C_{\operatorname{Stokes}}} < 1$;
that is, $T: X_R \rightarrow X_R$ is a contraction.
Therefore, the assertion follows from a straightforward application of the Banach fixed-point theorem \cite[Theorem 5.7]{Brezis2011}.
\end{proof}
\end{thm}

\subsection{Higher \texorpdfstring{$\LL^p$}{Lᵖ}-based Sobolev regularity of tangent Navier--Stokes}\label{sec:ns-regularity}

In this section we investigate the pick-up $\LL^p$-based regularity of solutions to the tangent Navier--Stokes equations beyond the basic regularity $(\bu,\uppi) \in \bW^{1,p}_t(\G) \times \LL^p_\#(\G)$ allowed by data $(\bf,g)$ smoother than in $(\bW^{1,p^*}_t(\G))' \times \LL^p_\#(\G)$.

The results are consistent with those in \autoref{sec:higher-reg-Stokes} for the tangent Stokes system.

\begin{thm}[higher $\LL^p$-based regularity of the tangent Navier--Stokes equations]\label{thm:Lpbased-regularity-NS}
Let $p \in (\frac{4}{3},\infty)$ if $d=2$ or $p \in (\frac{d}{2},\infty)$ if $d \geq 3$.
Moreover, assume $\G$ is of class $C^{m+2,1}$ for some nonnegative integer $m$.
If $(\bu,\uppi) \in \bW^{1,p}_t(\G) \times \LL^p_\#(\G)$ is a solution of \eqref{eq:weak-manifold-Navier--Stokes} with $\bf \in \bW^{m,p}_t(\G)$ and $g \in \WW^{m+1,p}_\#(\G)$, then $(\bu,\uppi)$ belongs to the space $\bW^{m+2,p}_t(\G) \times \WW^{m+1,p}_\#(\G)$.
Moreover, the following estimate holds
\begin{equation}\label{eq:strong-NavierStokes-apriori}
\|\bu\|_{m+2,p;\G} + \|\uppi\|_{m+1,p;\G} \leq F_{m,p}\big(\|\bf\|_{m,p;\G},\|g\|_{m+1,p;\G}, \|\bu\|_{1,p;\G}\big),
\end{equation}
where $F_{m,p}: \RR^3 \rightarrow [0,\infty)$ is a smooth function, strictly increasing in each of its three components, and satisfies $F_{m,p}(0,0,0) = 0$.
\end{thm}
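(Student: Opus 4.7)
The plan is to view the tangent Navier--Stokes system as a tangent Stokes system with modified source and iteratively upgrade the regularity of $(\bu,\uppi)$ using the higher-regularity theory for the tangent Stokes problem (\autoref{thm:Lp-based-regularity-tangentStokes}) together with Sobolev embeddings on $\G$ (\autoref{prop:GNG-embeddings}, \autoref{prop:Morrey-inequality}, \autoref{prop:Holder-compactly-embedded}). Concretely, I would rewrite the momentum equation as
\[
-\DeltaB \bu + \nablaG \uppi = \bf - (\nablaG \bu)\bu =: \bF, \qquad \divG \bu = g,
\]
observe that $(\nablaG \bu)\bu$ is tangential (since $(\nablaG\bu)^T\bnu = \mathbf{0}$), and then apply \autoref{thm:Lp-based-regularity-tangentStokes} repeatedly, showing at each step that $\bF$ lies in a strictly stronger Sobolev space than in the previous iteration. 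The assumption $p > d/2$ (respectively $p > 4/3$ when $d=2$) is precisely what is required to both give meaning to $(\nablaG\bu)\bu$ as a tangent $\bL^{r_0}$-field for some $r_0 > 1$ and to ensure that the bootstrap terminates in finitely many steps.

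For the base case $m=0$, from $\bu \in \bW^{1,p}_t(\G)$ and Sobolev embedding we obtain $\bu \in \bL^{q_0}(\G)$ for a suitable $q_0$, so H\"older's inequality yields $(\nablaG\bu)\bu \in \bL^{r_0}_t(\G)$ for some $r_0 > 1$ depending on $d$ and $p$. Applying \autoref{thm:Lp-based-regularity-tangentStokes} with source $\bF \in \bL^{r_0}_t(\G)$ and $g \in \WW^{1,p}_\#(\G) \hookrightarrow \WW^{1,r_0}_\#(\G)$ upgrades $(\bu,\uppi)$ to $\bW^{2,r_0}_t(\G) \times \WW^{1,r_0}_\#(\G)$, which, through another round of Sobolev embeddings, strictly improves the integrability of $\nablaG\bu$ and $\bu$, hence of $(\nablaG\bu)\bu$. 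Iterating, the strict inequality on $p$ forces the integrability indices $r_n$ to strictly increase and overtake $p$ in finitely many steps; at that point $\bF \in \bL^p_t(\G)$ and one last application of \autoref{thm:Lp-based-regularity-tangentStokes} delivers $\bu \in \bW^{2,p}_t(\G)$ and $\uppi \in \WW^{1,p}_\#(\G)$.

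For $m \geq 1$ I would proceed by induction on $m$. The induction hypothesis places us at $\bu \in \bW^{m+1,p}_t(\G)$ and $\uppi \in \WW^{m,p}_\#(\G)$, so combining the product rule (\autoref{lem:product-by-smooth}) with the Sobolev-embedding-based multiplicative estimate $\bW^{m+1,p}(\G)\cdot\bW^{m,p}(\G) \hookrightarrow \bW^{m,p}(\G)$ (valid throughout the allowed range of $p$) we conclude $(\nablaG\bu)\bu \in \bW^{m,p}_t(\G)$, hence $\bF \in \bW^{m,p}_t(\G)$, and a final application of \autoref{thm:Lp-based-regularity-tangentStokes} closes the induction step. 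The a priori bound is assembled inductively: each call to \autoref{thm:Lp-based-regularity-tangentStokes} contributes a linear estimate in the corresponding norm of $\bF$, into which the convective term enters as a polynomial (of degree increasing with the Sobolev index) in $\|\bu\|_{1,p;\G}$; composing finitely many such polynomial bounds produces a smooth function $F_{m,p}$ that is strictly increasing in each argument and vanishes at the origin.

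The principal obstacle will be the bookkeeping in the base case $m=0$: one must explicitly track the sequence of integrability indices produced by alternating Sobolev embeddings with \autoref{thm:Lp-based-regularity-tangentStokes} and verify that it strictly increases and reaches $p$ after finitely many iterations. This is precisely where the strict thresholds $p>d/2$ and (for $d=2$) $p>4/3$ enter; at the critical values the indices would stall and the bootstrap would fail to terminate. A secondary difficulty lies in verifying the multiplicative estimate $\bW^{m+1,p}(\G)\cdot\bW^{m,p}(\G)\hookrightarrow \bW^{m,p}(\G)$ uniformly throughout the induction, which again reduces to a careful use of the Sobolev structure on $\G$.
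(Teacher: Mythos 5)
Your proposal follows essentially the same route as the paper's proof: recast the momentum equation as a tangent Stokes system with source $\bf - (\nablaG\bu)\bu$, bootstrap the integrability of the convective term via Sobolev embeddings and \autoref{thm:Lp-based-regularity-tangentStokes} until it lands in $\bL^p_t(\G)$ (the strict thresholds $p>\frac{d}{2}$, resp.\ $p>\frac43$, being exactly what makes the index sequence strictly increase and terminate), and then induct on $m$ using a product estimate for $(\nablaG\bu)\bu$ in $\bW^{m,p}$, assembling $F_{m,p}$ by nesting the resulting polynomial bounds. The only cosmetic differences are that the paper splits the base case explicitly into $p>d$, $p=d$, $\frac{d}{2}<p<d$ with the recursion $q_{n+1}=\frac{dq_n}{2d-3q_n}$, and justifies the higher-order product rule by a density argument rather than by citing a multiplicative Sobolev embedding.
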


\begin{proof}
It suffices to prove
\begin{equation}\label{eq:strong-NavierStokes-apriori-WEAKERESTIMATE}
\|\bu\|_{m+2,p;\G} + \|\uppi\|_{m+1,p;\G} \leq F_{m,p}\big(\|\bf\|_{m,p;\G},\|g\|_{m+1,p;\G}, \|\bu\|_{m+1,p;\G}\big),
\end{equation}
from which \eqref{eq:strong-NavierStokes-apriori} easily follows from successive nesting.
The proof hinges on successive applications of \autoref{thm:Lp-based-regularity-tangentStokes} (higher $\LL^p$-based regularity of the tangent Stokes equations), \autoref{prop:GNG-embeddings} (Gagliardo--Nirenberg--Sobolev inequality) and \autoref{prop:Morrey-inequality} (Morrey's inequality).

We first prove \eqref{eq:strong-NavierStokes-apriori-WEAKERESTIMATE} for $m=0$.
For general $m$ the proof will follow by induction.
\\ \\
\textbf{Step 1: Case $m=0$.}
Notice that $(\bu,\uppi) \in \bW^{1,p}_t(\G) \times \LL^p_\#(\G)$ is a solution of the (weak form of) the tangent Stokes equations \eqref{eq:weak-manifold-stokes-nonHilbert} with data $\overline\bf := \bf - (\nablaG \bu)\bu \in (\bW^{1,p^*}_t(\G))'$ (cf.~\eqref{convection-estimate}) and $\overline g := g \in \WW^{1,p}_\#(\G)$.
Hence, we will prove that $(\nablaG \bu)\bu \in \bL^p_t(\G)$ and
\begin{equation}\label{convective-term-to-bound}
\|(\nablaG \bu)\bu\|_{0,p;\G} \leq \wt F_p\big(\|\bf\|_{0,p;\G},\|g\|_{1,p;\G}, \|\bu\|_{1,p;\G}\big),
\end{equation}
from which the assertion for $m=0$ follows from a direct application of \autoref{thm:Lp-based-regularity-tangentStokes}.
We will simply show the case $d\geq 3$ (the case $d=2$ is easier), for which there are three subcases: $p>d$, $p=d$ and $\frac{d}{2} < p <d$.

\begin{enumerate}
\item If $p>d$, then by \autoref{prop:Morrey-inequality}, we deduce that $\bu \in \bC(\G)$, and hence $(\nablaG\bu)\bu \in \bL^p_t(\G)$ with $\|(\nablaG\bu)\bu\|_{0,p;\G} \leq \|\nablaG\bu\|_{0,p;\G} \|\bu\|_{0,\infty;\G} \leq C \|\bu\|_{1,p;\G}^2$.

\item If $p=d$, then from \autoref{prop:GNG-embeddings}, we deduce that $\bu \in \bL^q_t(\G)$ for each $q \in (1,\infty)$, and hence by H\"older's inequality, $(\nablaG\bu)\bu \in \bL^q_t(\G)$ for each $q \in (1,p) = (1,d)$.
By \autoref{thm:Lp-based-regularity-tangentStokes}, we infer that $(\bu,\uppi) \in \bW^{2,q}_t(\G) \times \WW^{1,q}_\#(\G)$ for each $q \in (1,d)$.
Choosing $q \in (\frac{d}{2},d)$, we further obtain by \autoref{prop:GNG-embeddings} and \autoref{prop:Morrey-inequality} that $\nablaG \bu \in \bbL^{\frac{dq}{d-q}}(\G)$ and $\bu \in \bC(\G)$, with the estimates
\begin{equation*}
\|\bu\|_{0,\infty;\G}
\lesssim \|\bu\|_{2,q;\G}
\lesssim \|\bf\|_{0,q,\G} + \|g\|_{1,q;\G} + \|(\nablaG\bu)\bu\|_{0,q,\G}
\end{equation*}
and
\begin{equation*}
\|(\nablaG\bu)\bu\|_{0,q,\G} \leq \|\nablaG\bu\|_{0,d;\G} \|\bu\|_{0,\frac{dq}{d-q};\G} \lesssim \|\bu\|_{1,d;\G}^2.
\end{equation*}
This implies $(\nablaG\bu) \bu \in \bL^{\frac{dq}{d-q}}_t(\G) \subseteq \bL^d_t(\G)$ (because $\frac{dq}{d-q} \geq d=p$) along with
\begin{equation*}
\|(\nablaG\bu) \bu\|_{0,d;\G}
\leq \|\nablaG\bu\|_{0,d;\G} \|\bu\|_{0,\infty;\G}
\lesssim \|\bu\|_{1,d;\G} \left( \|\bf\|_{0,d;\G} + \|g\|_{1,d;\G} + \|\bu\|_{1,d;\G}^2  \right).
\end{equation*}

\item\label{it:d/2-less-p-less-d} If $p \in (\frac{d}{2},d)$, then from \autoref{prop:GNG-embeddings}, we deduce $\bu \in \bL^{\frac{dp}{d-p}}_t(\G)$, and hence by H\"older's inequality, $(\nablaG\bu) \bu \in \bL^{q_0}_t(\G)$ and $\|(\nablaG\bu) \bu\|_{0,q_0;\G} \lesssim \|\bu\|_{1,p;\G}^2$,
where $q_0 := \frac{dp}{2d-p} \in (\frac{d}{3},p)$.
Then, by \autoref{thm:Lp-based-regularity-tangentStokes}, we deduce that $(\bu,\uppi) \in \bW^{2,q_0}_t(\G) \times \WW^{1,q_0}_\#(\G)$.

If $q_0 < \frac{d}{2}$, then from \autoref{prop:GNG-embeddings} we deduce that $\nablaG\bu \in \bbL^{\frac{dq_0}{d-q_0}}(\G)$ and $\bu \in \bL^{\frac{dq_0}{d-2q_0}}_t(\G)$, whence by H\"older's inequality $(\nablaG\bu) \bu \in \bL^{q_1}_t(\G)$ for $q_1 := \frac{dq_0}{2d-3q_0}$.
Invoking again \autoref{thm:Lp-based-regularity-tangentStokes} yields $(\bu,\uppi) \in \bW^{2,\min\{p,q_1\}}_t(\G) \times \WW^{1,\min\{p,q_1\}}_\#(\G)$.
We can iterate this procedure as follows:
if $q_n < \frac{d}{2} < p$, define
\begin{equation}\label{Sobolev-exponent-sequence}
q_{n+1} := \frac{dq_n}{2d-3q_n}.
\end{equation}
Then, if $(\bu,\uppi) \in \bW^{2,q_n}_t(\G) \times \WW^{1,q_n}_\#(\G)$, H\"older's inequality and \autoref{prop:GNG-embeddings} imply that $(\nablaG\bu)\bu \in \bL^{q_{n+1}}(\G)$ with the estimate $\|(\nablaG\bu)\bu\|_{0,q_{n+1};\G} \leq \|\nablaG\bu\|_{0,\frac{d q_n}{d-q_n};\G} \|\bu\|_{0,\frac{d q_n}{d-2q_n};\G} \lesssim \|\bu\|_{2,q_n;\G}^2$.
Therefore, by \autoref{thm:Lp-based-regularity-tangentStokes}, we deduce that $(\bu,\uppi) \in \bW^{2,r_{n+1}}_t(\G) \times \WW^{1,r_{n+1}}_\#(\G)$, with $r_{n+1} := \min\{p,q_{n+1}\}$ and
\begin{equation}\label{apriori-Sobolev-NS-sequence}
\begin{aligned}
\|\bu\|_{2,r_{n+1};\G} + \|\uppi\|_{1,r_{n+1};\G}
& \leq C_{n,p} \left( \|\bf\|_{0,r_{n+1};\G} + \|g\|_{1,r_{n+1};\G} + \|(\nablaG\bu)\bu\|_{0,q_{n+1};\G} \right)\\
& \leq \wt C_{n,p} \left( \|\bf\|_{0,r_{n+1};\G} + \|g\|_{1,r_{n+1};\G} + \|\bu\|_{2,q_n;\G}^2 \right).
\end{aligned}
\end{equation}
Notice that if $q_{n+1}<\frac{d}{2} < p$, then $r_{n+1} = q_{n+1}$.
Consequently, if there is $K \in \NN$ such that $q_{n+1} < \frac{d}{2}$ for each $n \in \{1,\dotsc,K-1\}$, then subsequent applications of estimates \eqref{apriori-Sobolev-NS-sequence} starting from $n=K-1$ and finishing at $n=0$, in conjunction with the simple estimates $\|\cdot\|_{0,q_n;\G} \lesssim \|\cdot\|_{0,p;\G}$ and $\|\cdot\|_{1,q_n;\G} \lesssim \|\cdot\|_{1,p;\G}$, yield
\begin{equation}\label{eq:ALMOST-convective-term-to-bound}
\|\bu\|_{2,r_K;\G} \leq F_{m,p,K}\big(\|\bf\|_{0,p;\G},\|g\|_{1,p;\G}, \|\bu\|_{1,p;\G}\big).
\end{equation}
We claim that the algorithm described above terminates:
that is, there $N \in \NN$ such that $q_N \geq \frac{d}{2}$.
By contradiction suppose $q_n < \frac{d}{2} < p$ (hence $r_n = q_n$) for each $n \in \NN$.
Since $q_0>\frac{d}{3}$, it is easy to see by induction that $\{q_n\}_{n \in \NN}$ is strictly increasing.
Hence, there exists $\overline{q}>\frac{d}{3}$ such that $\lim_{n\to\infty} q_n = \overline{q}$.
By taking limit as $n\to\infty$ in \eqref{Sobolev-exponent-sequence}, we obtain that $\overline{q} := \frac{d \overline{q}}{2d-3 \overline{q}}$, or equivalently, $\overline{q} = \frac{d}{3}$; which is a contradiction.

Let $N$ be the smallest integer such that $q_{N+1} \geq \frac{d}{2}$.
Hence, for each $n \in \{0,\dotsc,N\}$ it holds that $r_n = q_n < \frac{d}{2} < p$.
Since $\bu \in \bW^{2,q_{N+1}}(\G) \subseteq \bW^{2,\frac{d}{2}}(\G)$, we deduce from \autoref{prop:GNG-embeddings} that $\nablaG \bu \in \bbL^d(\G)$ and $\bu \in \bL^q_t(\G)$ for each $q \in (1,\infty)$,
whence, by H\"older's inequality, $(\nablaG\bu)\bu \in \bL^q_t(\G)$ for each $q \in (1,d)$ with the estimate $\|(\nablaG\bu)\bu\|_{0,q;\G} \leq C_q \|\bu\|_{2,\frac{d}{2};\G}^2$.
In particular, choosing $q= p \in (1,d)$ and using \eqref{eq:ALMOST-convective-term-to-bound} for $K=N$ let us write
\begin{equation}\label{final-apriori-Sobolev-NS-sequence}
\|(\nablaG\bu)\bu\|_{0,p;\G}
\lesssim \|\bu\|_{2,\frac{d}{2};\G}^2
\leq c_{d,p,N} \|\bu\|_{2,r_N;\G}^2
\leq \tilde F_{m,p,N}\big(\|\bf\|_{0,p;\G},\|g\|_{1,p;\G}, \|\bu\|_{1,p;\G}\big),
\end{equation}
which is precisely \eqref{convective-term-to-bound}.
\end{enumerate}

\noindent \textbf{Step 2: Case $m \geq 1$.}
Assume that \eqref{eq:strong-NavierStokes-apriori-WEAKERESTIMATE} holds for a fixed $m \geq 0$.
Assume $\G$ is of class $C^{m+3,1}$ and $\bf \in \bW^{m+1,p}_t(\G)$ and $g \in \WW^{m+2,p}_\#(\G)$.
By the induction hypothesis we deduce that $(\bu,\uppi) \in \bW^{m+2,p}_t(\G) \times \WW^{m+1,p}_\#(\G)$ and
\begin{equation}\label{eq:hig-apriori-by-indhyp}
\|\bu\|_{m+2,p;\G} + \|\uppi\|_{m+1,p;\G} \leq F_{m,p}\big(\|\bf\|_{m,p;\G},\|g\|_{m+1,p;\G}, \|\bu\|_{m+1,p;\G}\big).
\end{equation}
We claim that
$(\nablaG\bu)\bu \in \bW^{m+1,p}_t(\G)$, its first $m+1$ covariant derivatives are given by the usual product rule, and
\begin{equation}\label{claim:convective-regularity}
\|(\nablaG\bu)\bu\|_{m+1,p;\G} \lesssim \|\bu\|_{m+2,p;\G}^2.
\end{equation}
To see this, we restrict ourselves to the case $p \in (\frac{d}{2},d)$ because the case $p \geq d$ is easier.
First observe that, since $\bu \in \bW^{m+2,p}(\G)$, \autoref{prop:GNG-embeddings} and \autoref{prop:Morrey-inequality} imply that $\bu$ and all of its covariant derivatives of order less or equal than $m$ are in $\LL^\infty(\G)$ and that its covariant derivatives of order exactly $m+1$ belong to $\LL^{\frac{dp}{d-p}}(\G)$.
Moreover, notice that $(\nablaG\bu)\bu = \bP (\nablaM \bu)\bu$ whence, adopting Einstein summation convention, $[(\nablaG\bu)\bu]_i = P_{ij} (\uD_k u_j) u_k$.
Therefore, upon recalling \autoref{def:Sobolev-spaces-manifold} for Sobolev spaces on $\G$ and the definition of differential operators on $\G$ \cite[Definition 3.3]{BenavidesNochettoShakipov2025-a}, we utilize the Leibniz rule for weak derivatives in Sobolev spaces in parametric domain (see, e.g.~\cite[Exercise 11.51(ii)]{Leoni:2009}) to obtain the following explicit formula for the covariant derivatives of order $m+1$ of $(\nablaG\bu)\bu$:
\begin{equation*}
\uD_{\br} (P_{ij} \uD_k u_j u_k)
= \sum_{\substack{\balpha, \bbeta,\bgamma \in \{0,1,\dotsc,d+1\}^{m+1} \\\balpha + \bbeta + \bgamma = \br\\ \|\balpha\|_0 + \|\bbeta\|_0 + \|\bgamma\|_0 = m+1 }} \uD_{\balpha} P_{ij} \uD_{\bbeta}(\uD_k u_j) \uD_{\bgamma} u_k, \qquad \forall \br \in \{1,\dotsc,d+1\}^{m+1}.
\end{equation*}
An explanation of the above compact notation follows:
for any $\bt \in \{\br,\balpha,\bbeta,\bgamma\}$ we denote $\uD_{\bt} := \uD_{t_1} \dotsc \uD_{t_{m+1}}$, with the convention that $\uD_0$ indicates the identity operator, and $\|\bt\|_0$ denotes the number of nonzero components of $\bt$.
Recalling that $p \in (\frac{d}{2},d)$ and $\G$ is of class $C^{m+3,1}$, we employ H\"older's inequalities and the aforementioned Sobolev embeddings to bound the sum above as follows.
For $\bbeta=\br$ we bound the corresponding summand as
\begin{equation*}
\|P_{ij} \uD_{\br}(\uD_k u_j) u_k\|_{0,p;\G}
\lesssim \|\bu\|_{m+2,p;\G} \|\bu\|_{0,\infty;\G}
\lesssim \|\bu\|_{m+2,p;\G}^2,
\end{equation*}
whereas for $\bbeta \neq \br$, we instead write
\begin{equation*}
\|\uD_{\balpha} P_{ij} \uD_{\bbeta}(\uD_k u_j) \uD_{\bgamma} u_k\|_{0,p;\G}
\lesssim \|\uD_{\bbeta}(\uD_k u_j)\|_{0,\frac{dp}{d-p};\G} \|\uD_{\bgamma} u_k\|_{0,d;\G}
\lesssim \|\bu\|_{m+1,\frac{dp}{d-p};\G} \|\bu\|_{m+1,d;\G}
\lesssim \|\bu\|_{m+2,p;\G}^2,
\end{equation*}
because $\frac{dp}{d-p} > p$.
This finishes the proof of claim \eqref{claim:convective-regularity}.

With \eqref{claim:convective-regularity} at hand, then it is clear that $(\bu,\uppi)$ is a solution of the (weak form of) the tangent Stokes equations \eqref{eq:weak-manifold-stokes-nonHilbert} with data $\overline\bf := \bf - (\nablaG \bu)\bu \in \bW^{m+1,p}_t(\G)$ and $g \in \WW^{m+2,p}_\#(\G)$ satisfying $\|\overline\bf\|_{m+1,p;\G} \lesssim \|\bf\|_{m+1,p;\G} + \|\bu\|_{m+2,p;\G}^2$.
Hence, by a direct application of \autoref{thm:Lp-based-regularity-tangentStokes} we deduce that $(\bu,\uppi) \in \bW^{m+3,p}_t(\G) \times \WW^{m+2,p}_\#(\G)$ and, in conjunction with the estimate \eqref{eq:hig-apriori-by-indhyp}, that
\begin{multline*}
\|\bu\|_{m+3,p;\G} + \|\uppi\|_{m+2,p;\G}
\stackrel{\eqref{eq:allp-Lpbased-Stokes_higher-apriori}}{\lesssim} \|\overline\bf\|_{m+1,p;\G} + \| g\|_{m+2,p;\G}
\lesssim \|\bf\|_{m+1,p;\G} + \|\bu\|_{m+2,p;\G}^2 + \|g\|_{m+2,p;\G}\\
\stackrel{\eqref{eq:hig-apriori-by-indhyp}}{\leq} \|\bf\|_{m+1,p;\G} + \left(F_{m,p}\big(\|\bf\|_{m,p;\G},\|g\|_{m+1,p;\G}, \|\bu\|_{m+1,p;\G}\big)\right)^2  + \|g\|_{m+2,p;\G}.
\end{multline*}
This finishes the induction step and the proof of \autoref{thm:Lpbased-regularity-NS}.
\end{proof}

\begin{remark}
We now comment on a couple of critical cases.
\begin{itemize}
\item If $d=2$ and $p=\frac{4}{3}$ (hence $p^*=4$), then $\bW^{1,\frac{4}{3}}(\G)$ is embedded into $\bL^4(\G)$.
Consequently, we can only ensure $(\nablaG \bu)\bu \in \bL^1_t(\G)$, which is not covered by \autoref{thm:Lp-based-regularity-tangentStokes} (higher $\LL^p$-based Sobolev regularity of the tangent Stokes equations).
\item If $d\geq3$ and $p=\frac{d}{2}$, then $\bW^{1,\frac{d}{2}}(\G)$ is embedded into $\bL^d(\G)$, and so $(\nablaG\bu)\bu \in \bL^{q_0}_t(\G)$, for $q_0 = \frac{d}{3}$.
If $d=3$, then $q_0=1$, which is not covered by \autoref{thm:Lp-based-regularity-tangentStokes}.
On the other hand, if $d \geq 4$ then the iterative process described in \autoref{it:d/2-less-p-less-d} within the proof of \autoref{thm:Lpbased-regularity-NS} fails to increase the integrability of the solution $(\bu,\uppi)$.
More precisely, $q_n = \frac{d}{3}$ for each $n \in \NN$, whence we are left with the ``suboptimal'' result:
$(\bu,\uppi) \in \bW^{2,\frac{d}{3}}_t(\G) \times \WW^{1,\frac{d}{3}}_\#(\G)$ and
\begin{equation*}
\|\bu\|_{2,\frac{d}{3};\G} + \|\uppi\|_{1,\frac{d}{3};\G}
\lesssim \|\bf\|_{0,\frac{d}{3};\G} + \|g\|_{1,\frac{d}{3};\G} + \|(\nablaG\bu)\bu\|_{0,\frac{d}{3};\G}
\lesssim \|\bf\|_{0,\frac{d}{3};\G} + \|g\|_{1,\frac{d}{3};\G} + \|\bu\|_{0,\frac{d}{2};\G}^2.
\end{equation*}
\end{itemize}
\end{remark}

\begin{remark}
Should we have a stability bound of the form $\|\bu\|_{1,p;\G} \lesssim \|\bf\|_{(\bW^{1,p^*}_t(\G))'} + \|g\|_{0,p;\G}$, then estimate \eqref{eq:strong-NavierStokes-apriori} in \autoref{thm:Lpbased-regularity-NS} could be improved to
\begin{equation*}
\|\bu\|_{m+2,p;\G} + \|\uppi\|_{m+1,p;\G} \leq \wt F_{m,p}(\|\bf\|_{m,p;\G},\|g\|_{m+1,p;\G}).
\end{equation*}
In particular, if $d \in \{2,3\}$, $p=2$ and $g = 0$, from \autoref{thm:Oseen-wellposedness} (well-posedness of the Oseen equations) with $\overline\bu = \bu$ we know that $\|\bu\|_{1;\G} \lesssim \|\bf\|_{(\bH^1_t(\G))'}$ and so
\begin{equation*}
\|\bu\|_{m+2;\G} + \|\uppi\|_{m+1;\G} \leq \wh F_m\big(\|\bf\|_{m,p;\G}\big).
\end{equation*}
\end{remark}

\section{Connection to other Laplacians}\label{sec:connection-other-laplacians}
As was remarked in the introduction, the Bochner Laplacian $\DeltaB = \bP \divG \nablaG$ is not the only way to define a Laplacian acting on tangent vector fields. The other two Laplacians that are of interest are the Hodge Laplacian $\DeltaH$ and the surface diffusion $\Delta_S = \bP \divG (\nablaG + \nablaG^T)$.
The operators $\Delta_S, \DeltaH$ are related to the Bochner Laplacian $\DeltaB$ through the following expressions:
\begin{equation*}
\Delta_S \bv = \DeltaB \bv + \bM \bv + \nablaG \divG \bv, \qquad \DeltaH \bv = \DeltaB \bv - \bM \bv,
\end{equation*}
where $\bM := \tr(\bB) \bB - \bB^2$ is the Ricci curvature tensor.

By considering $\Delta_S$ or $\DeltaH$ in lieu of $\DeltaB$ in \eqref{eq:manifold-BochnerPoisson}, \eqref{eq:manifold-stokes} and \eqref{eq:manifold-Navier--Stokes} we arrive at the following collection of problems:
\begin{align*}
\label{weak-VL_S} \tag{$\textbf{VL}_S$} -\DeltaB \bu - \bM \bu - \nablaG \divG \bu = -\Delta_S \bu & = \bf, & & \\
\label{weak-VL_H} \tag{$\textbf{VL}_H$} -\DeltaB \bu + \bM \bu = -\DeltaH \bu & = \bf, & & \\
\label{weak-S_S} \tag{$\textbf{S}_S$} -\DeltaB \bu - \bM \bu - \nablaG\divG \bu + \nablaG \uppi = -\Delta_S \bu + \nablaG \uppi & = \bf, & & \divG \bu = g, \\
\label{weak-S_H} \tag{$\textbf{S}_H$} -\DeltaB \bu + \bM \bu + \nablaG \uppi = -\DeltaH \bu + \nablaG \uppi & = \bf, & & \divG \bu = g, \\
\label{weak-NS_S} \tag{$\textbf{NS}_S$} -\DeltaB \bu - \bM \bu - \nablaG\divG \bu + (\nablaG \bu) \bu + \nablaG \uppi = -\Delta_S \bu + (\nablaG \bu) \bu + \nablaG \uppi & = \bf, & & \divG \bu = g, \\
\label{weak-NS_H} \tag{$\textbf{NS}_H$} -\DeltaB \bu + \bM \bu + (\nablaG \bu) \bu + \nablaG \uppi = -\DeltaH \bu + (\nablaG \bu) \bu + \nablaG \uppi & = \bf, & & \divG \bu = g.
\end{align*}
In the spirit of \eqref{eq:weak-manifold-BochnerPoisson-nonhilbertian}, \eqref{eq:weak-manifold-stokes-nonHilbert} and \eqref{eq:weak-manifold-Navier--Stokes}), integration-by-parts formulas induce variational formulations for these problems in which the flow velocity $\bu$ and pressure $\uppi$ (for tangent Stokes and tangent Navier--Stokes) are sought respectively in $\bW^{1,p}_t(\G)$ and $\LL^p_\#(\G)$, and test functions $\bv$ and $q$ lie respectively in $\bW^{1,p^*}_t(\G)$ and $\LL^{p^*}_\#(\G)$;
we omit their explicit expressions, which are easy to derive.
We stress that in the cases of \eqref{weak-S_S} and \eqref{weak-NS_S}, the term $\nablaG\divG \bu$ appearing in the left-hand side equals to $\divG g$, and hence can be regarded as data.

The question of well-posedness of these problems has to do with the validity of K\"orn-type inequalities in $\bW^{1,p}_t(\G)$, something we do not address in this paper.
Nevertheless, assuming a solution exists for each of these problems, we can ensure that it exhibits higher regularity provided that the data and manifold are regular enough.
\begin{thm}[higher regularity for problems with $\Delta_S, \DeltaH$]
Suppose $\G$ is of class $C^{m+2,1}$ for some nonnegative integer $m\geq 0$ and let $p \in (1,\infty)$.
Then,
\begin{enumerate}
\item[(\textbf{VL})]\label{it:othersVL-regularity} If $\bu \in \bW^{1,p}_t(\G)$ is a weak solution of either \eqref{weak-VL_S} or \eqref{weak-VL_H} with datum $\bf \in \bW^{m,p}_t(\G)$, then $\bu$ actually lies in $\bW^{m+2,p}_t(\G)$.
Moreover, the following a-priori estimate holds
\begin{equation}\label{eq:Lpbased-apriori-VL_S-VL_H}
\|\bu\|_{m+2,p;\G} \leq C_{m,p} \left( \|\bf\|_{m,p;\G} + \|\bu\|_{0,p;\G} \right).
\end{equation}
\item[(\textbf{S})] \label{it:othersS-regularity} If $(\bu, \uppi) \in \bW^{1,p}_t(\G) \times \LL^p_\#(\G)$ is a weak solution of either \eqref{weak-S_S} or \eqref{weak-S_H} with data $(\bf, g) \in \bW^{m,p}_t(\G) \times \WW^{m+1,p}_\#(\G)$, then $(\bu, \uppi)$ actually lies in $\bW^{m+2, p}_t(\G) \times \WW^{m+1, p}_\#(\G)$.
Moreover, the following a-priori estimate holds
\begin{equation*}
\|\bu\|_{m+2,p;\G} + \|\uppi\|_{m+1,p;\G} \leq C (\|\bf\|_{m,p;\G} + \|g\|_{m+1,p;\G} + \|\bu\|_{0,p;\G}).
\end{equation*}
\item[(\textbf{NS})] \label{it:othersNS-regularity} Suppose $p \in (\frac43,\infty)$ for $d=2$ and $p \in (\frac{d}{2},\infty)$ for $d\geq3$.
If $(\bu, \uppi) \in \bW^{1,p}_t(\G) \times \LL^p_\#(\G)$ is a weak solution of either \eqref{weak-NS_S} or \eqref{weak-NS_H} with data $(\bf, g) \in \bW^{m,p}_t(\G) \times \WW^{m+1,p}_\#(\G)$, then $(\bu, \uppi)$ actually lies in $\bW^{m+2, p}_t(\G) \times \WW^{m+1, p}_\#(\G)$.
Moreover, the following a-priori estimate holds
\begin{equation*}
\|\bu\|_{m+2,p;\G} + \|\uppi\|_{m+1,p;\G} \leq F_{m,p}(\|\bf\|_{m,p;\G},\|g\|_{m+1,p;\G}, \|\bu\|_{m+1,p;\G}),
\end{equation*}
where $F_{m,p}: \RR^3 \rightarrow [0,\infty)$ is a smooth function, strictly increasing in each of its three components, and satisfies $F_{m,p}(0,0,0) = 0$.

\end{enumerate}
\begin{proof}
The proofs for \eqref{weak-VL_H}, \eqref{weak-S_H}, \eqref{weak-NS_H} respectively follow from successive application of \autoref{thm:allp-based-regularity-BochnerLaplacian}, \autoref{thm:Lp-based-regularity-tangentStokes} and \autoref{thm:Lpbased-regularity-NS} with datum $\bf \gets \bf - \bM \bu$ as many times as the regularity of $\bf$ and $\G$ permits.
Moreover, the proofs for \eqref{weak-S_S} and \eqref{weak-NS_S} similarly follows from applying \autoref{thm:Lp-based-regularity-tangentStokes} and \autoref{thm:Lpbased-regularity-NS} with data $\bf \gets \bf + \bM\bu + \nablaG g$.

The proof for \eqref{weak-VL_S} is more delicate due to the presence of $\int_\G \divG \bu \, \divG \bv$, which is of the same order as $\int_\G \nablaG \bu : \nablaG \bv$.
For the sake of clarity we express the previously omitted variational formulation for \eqref{weak-VL_S};
it reads
\begin{equation}
\label{weak-VL_S-explicit} \qquad \int_\G \nablaG \bu : \nablaG \bv - \int_\G \bM \bu \cdot \bv + \int_\G \divG \bu \, \divG \bv = \langle \bf, \bv \rangle, \qquad \bv \in \bW^{1,p^*}_t(\G).
\end{equation}
By \autoref{prop:Helmholtzdecomposition} (Helmholtz--Weyl decomposition on $\G$) we can decompose the solution $\bu$ of \eqref{weak-VL_S-explicit} as $\bu = \bu_\sigma + \nablaG \phi$, where $\bu_\sigma \in \bW^{1,p}_{t,\sigma}(\G)$ and $\phi \in \WW^{2,p}_\#(\G)$.
Similarly any test function $\bv \in \bW^{1,p^*}_t(\G)$ can be written as $\bv = \bv_\sigma + \nablaG \psi$ where $\bv_\sigma \in \bW^{1,p^*}_{t,\sigma}(\G)$ and $\psi \in \WW^{2,p^*}_\#(\G)$.
Now, choosing $\bv = \bv_\sigma$ in \eqref{weak-VL_S-explicit} yields
\begin{equation*}
\int_\G \nablaG \bu_\sigma : \nablaG \bv_\sigma + \int_\G \nablaG \nablaG \phi: \nablaG \bv_\sigma = \int_\G (\bf + \bM\bu) \cdot \bv_\sigma, \qquad \forall \bv_\sigma \in \bW^{1,p^*}_{t,\sigma}(\G).
\end{equation*}
Since $\divG \bv_\sigma = 0$, we can use \autoref{lem:exploiting-symmetry-covarianthessian} (weak commutator identity) to rewrite the second term in the left-hand side as $\int_\G \nablaG\nablaG \phi: \nablaG \bv_\sigma = - \int_\G \bM\nablaG\phi \cdot \bv_\sigma$, thus arriving at
\begin{equation*}
\int_\G \nablaG \bu_\sigma : \nablaG \bv_\sigma
= \int_\G (\bf + \bM\bu - \bM \nablaG \phi) \cdot \bv_\sigma
= \int_\G (\bf + \bM\bu_\sigma) \cdot \bv_\sigma, \qquad \forall \bv_\sigma \in \bW^{1,p^*}_{t,\sigma}(\G).
\end{equation*}
Consequently, by \autoref{thm:Lp-based-regularity-tangentStokes} ($\LL^p$-based regularity for tangent Stokes), $\bu_\sigma \in \bW^{2,p}_t(\G)$ and
\begin{equation}\label{VL_S-divfreepart}
\|\bu_\sigma\|_{2,p;\G}
\lesssim \|\bf + \bM\bu_\sigma\|_{0,p;\G}
\lesssim \|\bf\|_{0,p;\G} + \|\bu_\sigma\|_{0,p;\G}
\lesssim \|\bf\|_{0,p;\G} + \|\bu\|_{0,p;\G}.
\end{equation}
On the other hand, by choosing $\bv = \nablaG \psi$ in \eqref{weak-VL_S-explicit} we obtain
\begin{equation*}
\int_\G \nablaG \bu_\sigma : \nablaG\nablaG \psi + \int_\G \nablaG \nablaG \phi : \nablaG\nablaG \psi + \int_\G \DeltaG \phi \, \DeltaG \psi
= \int_\G (\bf + \bM\bu) \cdot \nablaG \psi, \qquad \forall \psi \in \WW^{2,p^*}_\#(\Gamma).
\end{equation*}
Again, by means of \autoref{lem:exploiting-symmetry-covarianthessian} we can rewrite the first two terms in the left-hand side as
\begin{align*}
\int_\G \nablaG \bu_\sigma : \nablaG\nablaG \psi & = - \int_\G \bM \bu_\sigma \cdot \nablaG \psi,\\
\int_\G \nablaG \nablaG \phi : \nablaG\nablaG \psi & = \int_\G \DeltaG \phi \, \DeltaG \psi - \bM \nablaG \phi \cdot \nablaG \psi.
\end{align*}
Thus, recalling that $\bu = \bu_\sigma + \nablaG \phi$, we obtain
\begin{equation*}
2 \int_\G \DeltaG \phi \, \DeltaG \psi
= \int_\G (\bf + 2\bM\bu ) \cdot \nablaG \psi, \qquad \forall \psi \in \WW^{2,p^*}_\#(\Gamma),
\end{equation*}
whence by the $\LL^p$-based regularity of the biharmonic problem \cite[\S4]{BenavidesNochettoShakipov2025-a} we deduce that $\phi \in \WW^{3,p}_\#(\Gamma)$ and
\begin{equation}\label{VL_S-gradpart}
\|\phi\|_{3,p;\G} \lesssim \|\divG (\bf + 2\bM\bu)\|_{(\WW^{1,p^*}(\Gamma))'} \lesssim \|\bf\|_{0,p;\G} + \|\bu\|_{0,p;\G}.
\end{equation}
Combining estimates \eqref{VL_S-divfreepart} and \eqref{VL_S-gradpart} yields \eqref{eq:Lpbased-apriori-VL_S-VL_H} for $m=0$.
Higher regularity estimates follow from a straightforward bootstrapping argument.
\end{proof}
\end{thm}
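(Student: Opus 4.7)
The plan is to reduce each of the seven problems to one whose regularity theory has already been established earlier in the paper (Theorems~\ref{thm:Lp-based-regularity-tangentStokes}, \ref{thm:Lpbased-regularity-NS}, \ref{thm:biharmonic-reg}, \ref{thm:biharmonic-vector-reg}), by exploiting the identities relating $\Delta_S$, $\Delta_H$ and $\DeltaB$. The key observation is that for six of the seven problems the difference between the replaced Laplacian and $\DeltaB$ is either a genuinely lower-order term ($\mp\bM\bu$ when $\Delta_H$ is used) or can be absorbed into the data (the term $\nablaG\divG\bu=\nablaG g$ appearing in the Stokes and Navier--Stokes systems with $\Delta_S$). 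Only $\textbf{VL}_S$, which has no incompressibility constraint and contains the second-order term $\nablaG\divG\bu$, requires a genuinely new argument.

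For the Hodge problems \eqref{weak-VL_H}, \eqref{weak-S_H}, \eqref{weak-NS_H} I would move the term $\bM\bu$ to the right-hand side, so that the variational formulations reduce to the already-studied ones with data $\bf\gets\bf-\bM\bu\in\bW^{m,p}_t(\G)$ (since $\bM\in C^{m+1}$ and $\bu\in\bW^{m,p}_t(\G)$ by assumption or by induction), and apply \autoref{thm:bl}, \autoref{thm:stokes}, \autoref{thm:ns3} respectively. The stated a priori estimates follow immediately, with $\|\bu\|_{0,p;\G}$ appearing as a ``floor'' term reflecting the lower-order perturbation. For \eqref{weak-S_S} and \eqref{weak-NS_S} the same trick works after additionally writing $\nablaG\divG\bu=\nablaG g$, giving data $\bf\gets\bf+\bM\bu+\nablaG g\in\bW^{m,p}_t(\G)$ provided $g\in\WW^{m+1,p}_\#(\G)$. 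Bootstrapping on $m$ then upgrades the regularity up to the limit set by the smoothness of $\G$.

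The main obstacle is \eqref{weak-VL_S}. My plan is to invoke \autoref{prop:Helmholtzdecomposition} (Helmholtz--Weyl decomposition) and write $\bu=\bu_\sigma+\nablaG\phi$ with $\bu_\sigma\in\bW^{1,p}_{t,\sigma}(\G)$ and $\phi\in\WW^{2,p}_\#(\G)$, and decompose test functions $\bv=\bv_\sigma+\nablaG\psi$ analogously. Testing with $\bv_\sigma$ kills the $\int_\G\divG\bu\,\divG\bv$ term, and the cross contribution $\int_\G\nablaG\nablaG\phi:\nablaG\bv_\sigma$ can be converted into a lower-order $-\int_\G\bM\nablaG\phi\cdot\bv_\sigma$ term via the weak commutator identity \autoref{lem:exploiting-symmetry-covarianthessian} (which crucially uses the absence of boundary). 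This yields a Bochner--Laplace problem for $\bu_\sigma$ with datum $\bf+\bM\bu_\sigma$, and \autoref{thm:bl} gives $\bu_\sigma\in\bW^{2,p}_t(\G)$. Testing with $\nablaG\psi$, the same commutator identity collapses all second-order contributions to $2\int_\G\DeltaG\phi\,\DeltaG\psi$ modulo lower-order terms, producing an ultra-weak biharmonic equation $2\DeltaG^2\phi=-\divG(\bf+2\bM\bu)$ for $\phi$; \autoref{thm:biharmonic-reg} then delivers $\phi\in\WW^{3,p}_\#(\G)$, whence $\nablaG\phi\in\bW^{2,p}_t(\G)$. Combining both pieces yields \eqref{eq:Lpbased-apriori-VL_S-VL_H} for $m=0$, and a standard bootstrap on $m$ concludes the proof.

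The hard part is therefore \eqref{weak-VL_S}: without the weak commutator identity the cross terms between the divergence-free and gradient components of the Helmholtz decomposition would not decouple cleanly, and one would be left with a genuinely second-order coupling that the scalar regularity theory cannot resolve directly.
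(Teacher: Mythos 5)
Your proposal is correct and follows essentially the same route as the paper's own proof: lower-order absorption of $\bM\bu$ (and of $\nablaG\divG\bu=\nablaG g$) for the six constrained or Hodge problems, and for \eqref{weak-VL_S} the Helmholtz--Weyl splitting combined with the weak commutator identity of \autoref{lem:exploiting-symmetry-covarianthessian} to decouple a Bochner--Laplace problem for $\bu_\sigma$ from a biharmonic problem for $\phi$, followed by bootstrapping in $m$. The only cosmetic difference is that you cite \autoref{thm:bl} for \eqref{weak-VL_H} where the paper cites \autoref{thm:Lp-based-regularity-tangentStokes}; both yield the same conclusion.
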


\section*{Acknowledgements}
The authors are grateful to professor Giuseppe Savaré (Bocconi University), whose sharp observations on a previous version of this manuscript motivated us to streamline the overall arguments in the paper and improve its results.

\appendix

\section{Variational problems in reflexive Banach spaces}\label{app:var-prob}
In this section we recall results on the well-posedness of variational problems in reflexive Banach spaces.
Classical references on the subject include \cite{ErnGuermond2004,Ciarlet2013,Brezis2011}.
We also mention the recent work \cite{Gatica2024} (in Spanish).

For any continuous linear operator $T: X \rightarrow Y'$ (that is, $T \in \sL(X,Y')$), where $X$ and $Y$ are Banach spaces, we let $T': Y'' \rightarrow X'$ denote its Banach adjoint defined by $T'(\sG) = \sG \circ T$ for each $\sG \in Y''$.
If $Y$ is reflexive, we also define $T^\mathrm{t}: Y \rightarrow X'$ by $T' \circ \cJ_Y$, where $\cJ_Y : Y \rightarrow Y''$ is the canonical embedding of $Y$ onto its bidual $Y''$.
In this case, $T$ and $T^\mathrm{t}$ are related via the identity
\begin{equation}\label{Banach-duality}
\langle T(x), y \rangle_{Y' \times Y} = \langle T^\mathrm{t}(y), x \rangle_{X' \times X}, \qquad \forall x \in X, \, y \in Y.
\end{equation}

The relation between the injectivity and surjectivity of the operators $T$ and $T^\mathrm{t}$ is summarized in \cite[Theorem~A.1]{BenavidesNochettoShakipov2025-a} based on \cite[Section 2.7]{Brezis2011}.
Its direct consequence is the celebrated Banach--Ne\v{c}as--Babu\v{s}ka theorem given in \cite[Theorem~A.2]{BenavidesNochettoShakipov2025-a}.

In several applications the underlying operator $T$ is defined in terms of product spaces and with a ``skew-triangular'' structure.
In this case, \cite[Theorem~A.2]{BenavidesNochettoShakipov2025-a}
can be equivalently reformulated as the so-called \emph{generalized Babu\v{s}ka--Brezzi} theorem.

\begin{thm}[generalized Babu\v{s}ka--Brezzi]\label{thm:gen-BB}
Let $X_2$, $M_1$, $X_1$ and $M_2$ be real Banach spaces such that $M_1$, $X_1$ and $M_2$ are reflexive, and consider $A \in \sL(X_2,X_1')$ and $B_i \in \sL(X_i,M_i')$.
Moreover, for each $i \in \{1,2\}$ define
\begin{equation*}
K_i := \operatorname{ker}(B_i) = \{x \in X_i : B_i(x) = 0\} = \{x_i \in X_i : \langle B_i(x_i), m_i \rangle = 0, \quad \forall m_i \in M_i\}.
\end{equation*}
Then, for each $(F,G) \in X_1' \times M_2'$ there exists a unique $(x_2,m_1) \in X_2 \times M_1$ solution of
\begin{equation*}
\begin{aligned}
A(x_2) + B_1^\mathrm{t}(m_1) & = F,\\
B_2(x_2) & = G.
\end{aligned}
\end{equation*}
if and only if
\begin{itemize}
\item[i)] For each $i \in \{1,2\}$ there exists $\beta_i > 0$ such that
\begin{equation*}
\sup_{\substack{x_i \in X_i\\ x \neq 0}} \frac{\langle B_i(x_i),m_i \rangle}{\|x_i\|} \geq \beta_i \|m_i\|, \qquad \forall m_i \in M_i.
\end{equation*}
\end{itemize}
and one of the following pair of conditions hold

\vspace*{0.5cm}
\begin{minipage}{0.47\textwidth}
\begin{enumerate}
\item[ii-1)] If $x_2 \in K_2$ is such that $\langle A(x_2), x_1 \rangle = 0$ for each $x_1 \in K_1$, then $x_2 = 0$.
\item[ii-2)] There is $\alpha>0$ such that $$\sup_{\substack{x_2 \in K_2\\ x_2 \neq 0}} \frac{\langle A(x_2), x_1 \rangle}{\|x_2\|} \geq \alpha \|x_1\|, \quad \forall x_1 \in K_1.$$
\end{enumerate}
\end{minipage}%
\begin{minipage}{0.47\textwidth}
\begin{enumerate}
\item[ii-1)'] If $x_1 \in K_1$ is such that $\langle A(x_2), x_1 \rangle = 0$ for each $x_2 \in K_2$, then $x_1 = 0$.
\item[ii-2)'] There is $\alpha>0$ such that $$\sup_{\substack{x_1 \in K_1\\ x_1 \neq 0}} \frac{\langle A(x_2), x_1 \rangle}{\|x_1\|} \geq \alpha \|x_2\|, \quad \forall x_2 \in K_2.$$
\end{enumerate}
\end{minipage}

\noindent In this case, the solution $(x_2,m_1)$ satisfies the a-priori bounds
\begin{equation*}
\|x_2\| \leq \frac{1}{\alpha} \|F\| + \frac{1}{\beta_2} \left( 1 + \frac{\|A\|}{\alpha}  \right) \|G\| \qquad
\|m_1\| \leq \frac{1}{\beta_1} \left( 1 + \frac{\|A\|}{\alpha} \right)\|F\| + \frac{\|A\|}{\beta_1\beta_2} \left( 1 + \frac{\|A\|}{\alpha}  \right) \|G\|.
\end{equation*}
\end{thm}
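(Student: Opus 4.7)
The plan is to reduce the saddle-point problem to two successive applications of \autoref{thm:BNB} (Banach--Ne\v{c}as--Babu\v{s}ka) and \autoref{thm:inj-surj-rel} (characterization of injectivity/surjectivity), splitting the unknown $x_2$ into a ``particular lift'' of the constraint $B_2(x_2)=G$ plus a correction living in $K_2$. More precisely, I would first use the inf-sup condition for $B_2$ in (i) together with the surjectivity clause of \autoref{thm:inj-surj-rel} to produce some $x_2^G \in X_2$ with $B_2(x_2^G)=G$ and $\|x_2^G\| \leq \beta_2^{-1}\|G\|$ (one can take the minimum-norm preimage in the quotient $X_2/K_2$, or invoke the open mapping theorem applied to the induced isomorphism $X_2/K_2 \to M_2'$).

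Second, with this lift in hand, the system becomes: find $(x_2^0,m_1) \in K_2 \times M_1$ such that $A(x_2^0) + B_1^{\mathrm t}(m_1) = F - A(x_2^G) =: \wt F$ in $X_1'$. Testing against $x_1 \in K_1$ annihilates the $B_1^{\mathrm t}(m_1)$ term, since $\langle B_1^{\mathrm t}(m_1), x_1\rangle = \langle B_1(x_1), m_1 \rangle = 0$. Hence the component $x_2^0$ must solve the reduced problem
\begin{equation*}
\langle A(x_2^0), x_1 \rangle = \langle \wt F, x_1 \rangle, \qquad \forall\, x_1 \in K_1,
\end{equation*}
which is precisely the setting of \autoref{thm:BNB} applied to the restriction $A\big|_{K_2} : K_2 \to K_1'$: conditions (ii-1)/(ii-2) (or their primed counterparts) are exactly the BNB hypotheses on this restriction, so $x_2^0$ exists and is unique, with $\|x_2^0\| \leq \alpha^{-1}\|\wt F\|_{K_1'} \leq \alpha^{-1}(\|F\| + \|A\|\|x_2^G\|)$. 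Setting $x_2 := x_2^G + x_2^0$ delivers the first component of the solution with the advertised bound.

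Third, to recover $m_1$, observe that $\phi := F - A(x_2) \in X_1'$ now annihilates $K_1$ by construction. The inf-sup condition in (i) for $B_1$ is, via \autoref{thm:inj-surj-rel}(i), equivalent to saying that $B_1^{\mathrm t} : M_1 \to X_1'$ is injective with closed range equal to the annihilator $K_1^\circ \subseteq X_1'$. Since $\phi \in K_1^\circ$, there exists a unique $m_1 \in M_1$ with $B_1^{\mathrm t}(m_1) = \phi$ and $\|m_1\| \leq \beta_1^{-1}\|\phi\| \leq \beta_1^{-1}(\|F\| + \|A\|\|x_2\|)$. This gives the pair $(x_2,m_1)$, uniqueness is immediate from the two injectivity statements used, and the a-priori bounds follow by chaining the three estimates. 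The converse implication (necessity of (i) and (ii)) is obtained by specializing: the inf-sup on $B_i$ is forced by the surjectivity onto $X_1' \times M_2'$ in the second slot (and in the $m_1$ slot via duality), while the kernel condition on $A$ is forced by uniqueness when $G=0$ and $F$ vanishes on $K_1$.

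The main obstacle, and essentially the only nontrivial point, is the closed-range identification $\operatorname{range}(B_1^{\mathrm t}) = K_1^\circ$ used in the third step; this is the content of the equivalence ``injective with closed range $\Leftrightarrow$ inf-sup'' in \autoref{thm:inj-surj-rel}, which in turn rests on the closed range theorem and reflexivity of $M_1$. Everything else is bookkeeping of constants. Finally, the equivalence with the joint formulation $T(x_2,m_1) := (A(x_2)+B_1^{\mathrm t}(m_1),\, B_2(x_2))$ being an isomorphism $X_2 \times M_1 \to X_1' \times M_2'$ provides an alternative, purely BNB-based route: one can directly verify (i)--(ii) of \autoref{thm:BNB} for $T$ by the same kernel-plus-lift decomposition, and the sharp constants in the bounds then come from tracking the continuity constant of $T^{-1}$ through the three-step estimate.
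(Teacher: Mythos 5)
The paper does not actually prove this theorem: it is stated as a known result, with the general reflexive-Banach version attributed to Bernardi, Canuto and Maday (and the Hilbert/symmetric cases to Brezzi and Nicolaides), so there is no in-paper argument to compare against. Your proposal is the standard constructive proof, and its sufficiency direction is correct and complete in outline: lift the constraint using the surjectivity of $B_2$ (equivalent to the inf-sup in (i) via \autoref{thm:inj-surj-rel}), solve the reduced problem on $K_2$ tested against $K_1$ by \autoref{thm:BNB} using (ii-1)/(ii-2), and recover $m_1$ from the closed-range identity $\operatorname{range}(B_1^{\mathrm t}) = K_1^\circ$; the constants you track reproduce the stated a-priori bounds. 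Two minor points deserve care. First, the ``minimum-norm preimage'' of $G$ need not exist, since $X_2$ is \emph{not} assumed reflexive (only $M_1$, $X_1$, $M_2$ are); the coset infimum equals at most $\beta_2^{-1}\|G\|$, so one should take an $\varepsilon$-almost-minimizing representative and let $\varepsilon \to 0$ in the final (closed) inequalities. Second, the necessity direction is only gestured at: deducing (ii-1) from uniqueness requires first establishing $\operatorname{range}(B_1^{\mathrm t}) = K_1^\circ$ so that $A(x_2) \in K_1^\circ$ can be written as $-B_1^{\mathrm t}(m_1)$ and absorbed into a homogeneous solution, and (ii-2) requires the open mapping theorem applied to the solution operator; since the theorem is stated as an ``if and only if,'' a complete write-up must include these steps. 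Neither issue is a genuine gap --- both are routine --- and the key nontrivial ingredient you isolate (closed range of $B_1^{\mathrm t}$ plus reflexivity of $M_1$ yielding $\operatorname{range}(B_1^{\mathrm t}) = K_1^\circ$) is indeed where the real content lies.
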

\autoref{thm:gen-BB} in the Hilbert and symmetric ($B_1=B_2$) case was proved by Brezzi \cite{Brezzi1974}, and later extended to the nonsymmetric case by Nicolaides \cite{Nicolaides1982}.
\autoref{thm:gen-BB} as stated was proved much later by Bernardi, Canuto and Maday \cite{BernardiCanutoMaday:1988}.

\section{Calculus identities on manifolds}\label{app:calculus}
In this section we present some results on tangential calculus that are instrumental in \autoref{sec:bl}.
Within this section we adopt the Einstein summation convention.

The following result is used in the proof of \autoref{thm:eq:weak-manifold-BochnerPoisson-nonhilbertian-wellposedness} (well-posedness of the Bochner Laplacian in $\bW^{1,p}_t(\G)$) to relate, in their weak forms, the Bochner Laplacian operator and the componentwise application of the Laplace--Beltrami operator.
\begin{proposition}\label{prop:weak_BL-LB-relation}
Suppose $\G$ is of class $C^2$.
Then,
\begin{equation*}
\nablaG(\bP\bv) = - (\bv \cdot \bnu) \bB + \nablaG \bv, \qquad \nablaG \bu : \nablaG (\bP\bv) = \nablaM \bu : \nablaM \bv - (\bv \cdot \bnu) \nablaM \bu : \bB + (\bnu \bu^T \bB) : \nablaM \bv,
\end{equation*}
for each $\bu \in \bC^1_t(\G)$ and $\bv \in \bC^1(\G)$ (not necessarily tangential).
\begin{proof}
We first write,
\begin{equation*}
[\nablaG(\bP\bv)]_{ij}
= [\bP \nablaM(\bP\bv)]_{ij}
= P_{ik} [\nablaM(\bP\bv)]_{kj}
= P_{ik} \uD_j (\bP\bv)_k
= P_{ik} \uD_j (P_{kl} v_l)
= P_{ik} \uD_j P_{kl} v_l + P_{ik} P_{kl} \uD_j v_l.
\end{equation*}
By recalling identity $B_{qr} u_r = - \uD_q u_r \nu_r$ (cf.~\eqref{WeingartenExtr-for-tangent}), the facts that $\bB \bP = \bB$ and $\bP \bnu = \mathbf{0}$, and noticing that $\uD_k P_{jl} = -B_{kj} \nu_l -B_{kl} \nu_j$ because $\bP_{jl} = \delta_{jl} - \nu_j \nu_l$ and $\bB = \nablaM \bnu$, we arrive at
\begin{equation*}
[\nablaG(\bP\bv)]_{ij}
= - P_{ik} B_{kj} \nu_l v_l - P_{ik} B_{lj} \nu_k v_l + P_{il} \uD_j v_l
= - (\bv \cdot \bnu) B_{ij} + (\bP \nablaM \bv)_{ij}.
\end{equation*}
Hence, it is clear that
\begin{equation*}
\nablaG \bu: \nablaG(\bP\bv)
= - (\bv \cdot \bnu) \nablaM \bu : \bB + (\bP \nablaM \bu) : \nablaM \bv
= - (\bv \cdot \bnu) \nablaM \bu : \bB + \nablaM \bu : \nablaM \bv - (\bN \nablaM \bu) : \nablaM \bv,
\end{equation*}
where $\bN := \bnu \otimes \bnu$.
We conclude the proof by noticing that, again by \eqref{WeingartenExtr-for-tangent}, it holds that
\begin{equation*}
(\bN \nablaM \bu) : \nablaM \bv
= \tr(\nablaM^T \bu \, \bnu \bnu^T \nablaM \bv)
= - \tr(\bB \bu \, \bnu^T \nablaM \bv)
= - (\bnu \bu^T \bB) : \nablaM \bv.
\end{equation*}
This concludes the proof.
\end{proof}
\end{proposition}
The following result shows that the covariant Hessian $\nablaG\nablaG v$ of a scalar function $v \in C^2(\G)$ is symmetric, provided $\G$ is of class $C^2$.

\begin{proposition}[symmetry of the covariant Hessian]\label{prop:sym-cov-Hess}
Suppose $\G$ is of class $C^2$.
Let the superindex $^e$ denote an operator that extends functions defined on $\G$ to a tubular neighborhood $\Omega_\delta \subseteq \RR^{d+1}$ of $\G$ with the following property: if $v \in C^2(\G)$ then $v^e: \Omega_\delta \rightarrow \RR$ is also of class $C^2$.
Then, identity \eqref{symmetry-cov-Hes} holds true, that is:
\begin{equation}
\tag{\ref{symmetry-cov-Hes}}
\nablaG \nablaG v = \bP (\nabla\nabla v^e)\rvert_\G \bP - ((\nabla v^e)\rvert_\G \cdot \bnu) \bB, \qquad \text{on $\G$.}
\end{equation}
\begin{proof}
Let $\sfd$ denote the \emph{signed distance function} \cite[Section 1.2.3]{BonitoDemlowNochetto2020} defined on a sufficiently small tubular neighborhood of $\G$, which we assume without loss of generality to coincide with $\Omega_\delta$.
Since $\G$ is of class $C^2$, we know that $\sfd \in C^2(\Omega_\delta)$.
The \emph{closest point projection} $\bP_{\sfd}: \Omega_\delta \rightarrow \RR$ is defined by $\bP_{\sfd}(\bx) = \bx - \sfd(x) \nabla \sfd(\bx)$ for each $\bx \in \Omega_\delta$.
Notice that $\nabla \bP_{\sfd} = \bP \circ \bP_{\sfd} - \sfd \nabla^2 \sfd$ in $\Omega_\delta$.
Let the superindex $^n$ denote the \emph{constant normal extension} \cite[eq.~(1.26)]{BonitoDemlowNochetto2020} of functions defined on $\G$ to $\Omega_\delta$.
More precisely, for each $v: \Gamma \rightarrow \RR$, the function $v^n:\Omega_\delta \rightarrow \RR$ is defined by $v^n := v \circ \bP_{\sfd}$.
Since $\G$ is of class $C^2$, then for each $v \in C^2(\G)$ we can only ensure $v^n : \Omega_\delta \rightarrow \RR$ to be of class $C^1$.
This is because differentiating $v^n = v \circ \bP_\sfd$ amounts to differentiating $\bP_\sfd = \mathrm{id} - \sfd \nabla \sfd$, which already contains $\nabla \sfd$.
From \cite[p.~15]{BonitoDemlowNochetto2020} we know that the extensions $^e$ and $^n$ are related with the differential operators $\nablaM$ on $\G$ acting on scalar and vector-valued functions $v$ and $\bv$ via the identity
\begin{align*}
\nablaM v = (\nabla v^n)\rvert_\G = \bP (\nabla v^e)\rvert_\G, \qquad
\nablaM \bv = (\nabla \bv^n)\rvert_\G = (\nabla v^e)\rvert_\G \bP, \qquad \text{on $\G$}.
\end{align*}
Recall also that $\nablaG v = \nablaM v$ and $\nablaG \bv = \bP \nablaM \bv$.

By the chain rule, we have that
\begin{equation*}
\nabla[ (\nabla v^e)\rvert_\G  ]^n
= \nabla\left( (\nabla v^e) \circ \bP_{\sfd}  \right)
= \left((\nabla^2 v^e) \circ \bP_{\sfd}\right) \nabla \bP_{\sfd}, \qquad \text{in $\Omega_\delta$},
\end{equation*}
which restricted to $\G$ reduces to
\begin{equation*}
\nablaM \left( (\nabla v^e)\rvert_\G  \right) = (\nabla^2 v^e)\rvert_\G \bP, \qquad \text{on $\G$}.
\end{equation*}
Moreover, by multiplying by $\bP$ from the left, we also obtain
\begin{equation}\label{ALMOST-symmetry-cov-Hes}
\nablaG \left( (\nabla v^e)\rvert_\G  \right) = \bP(\nabla^2 v^e)\rvert_\G \bP, \qquad \text{on $\G$}.
\end{equation}
Recalling that $\bP + \bN = \bI$, we can decompose $(\nabla v^e)\rvert_\G$ as $(\nabla v^e)\rvert_\G = \bP (\nabla v^e)\rvert_\G + \bN (\nabla v^e)\rvert_\G = \nablaG v + \bN (\nabla v^e)\rvert_\G$.
Moreover, using the first identity of \autoref{prop:weak_BL-LB-relation} with $\bv = (\nabla v^e)\rvert_\G$ we have that $\nablaG (\bN (\nabla v^e)\rvert_\G) = \nablaG ((\nabla v^e)\rvert_\G) - \nablaG (\bP (\nabla v^e)\rvert_\G) = ((\nabla v^e)\rvert_\G \cdot \bnu) \bB$.
Consequently, \eqref{ALMOST-symmetry-cov-Hes} becomes
\begin{equation*}
\nablaG \nablaG v + ((\nabla v^e)\rvert_\G \cdot \bnu) \bB = \bP(\nabla^2 v^e)\rvert_\G \bP,
\end{equation*}
which is precisely \eqref{symmetry-cov-Hes}.
\end{proof}
\end{proposition}

The symmetry of the covariant Hessian allows us to provide a simple proof of the commutator identity \eqref{commutators:second-tangential-derivatives}.

\begin{proposition}(Dziuk--Elliott's commutator identity)\label{prop:commutator-identity}
Suppose $\G$ is of class $C^2$.
Then, for each $v \in C^2(\G)$ and for each $i, j \in \{1,\dotsc,d+1\}$, identity \eqref{commutators:second-tangential-derivatives} holds true.
That is,
\begin{equation}
\tag{\ref{commutators:second-tangential-derivatives}}
\uD_i \uD_j v - \uD_j \uD_i v = (\bB \nablaM v)_j \nu_i - (\bB \nablaM v)_i \nu_j,
\end{equation}
\begin{proof}
First, notice that identity \eqref{commutators:second-tangential-derivatives} is equivalent to
\begin{equation*}\label{Matrix-form-commutators}
\nablaM^T \nablaM v - \nablaM \nablaM v
= \bnu (\bB \nablaM v)^T - (\bB \nablaM v) \bnu^T,
\end{equation*}
which we now prove.
Recalling identity $\bP + \bN = \bI$, and relation $\nablaG \bv = \bP\nablaM \bv$ for vector-valued functions $\bv$, we rewrite the second term on the left-hand side as follows
\begin{equation*}
\nablaM \nablaM v
= \nablaG \nablaM v + \bN\nablaM (\nablaM v)
= \nablaG \nablaM v - \bnu (\nablaM^T v) \bB,
\end{equation*}
where in the last equality we have used \eqref{ExtCov-for-tangent} with $\bv = \nablaM v \in \bC^1_t(\G)$ to write $\bN\nablaM (\nablaM v) = - \bnu (\nablaM^T v) \bB$.
Consequently, recalling from \autoref{prop:sym-cov-Hess} that $\nablaG \nablaG v = \nablaG \nablaM v$ is symmetric, we arrive at
\begin{equation*}
\nablaM^T \nablaM v - \nablaM \nablaM v
= - \bB (\nablaM v) \bnu^T + \bnu (\nablaG^T v) \bB.
\end{equation*}
This finishes the proof.
\end{proof}
\end{proposition}

We now prove the important \autoref{lem:exploiting-symmetry-covarianthessian} (weak commutator identity), that extends \cite[Lemma 2.1]{JankuhnOlshanskiiReusken2018}.

\begin{proof-thm}{{lem:exploiting-symmetry-covarianthessian}}
Let us first assume that $\bv \in \bC^2(\G)$ (not necessarily tangential) and $\phi \in C^2(\G)$.
Then, the symmetry of the covariant Hessian $\nablaG\nablaG \phi$ \eqref{symmetry-cov-Hes}, integration-by-parts formula \cite[Lemma B.7]{BouckNochettoYushutin2024} and the fact that $\nablaG^T \bv \, \bnu = \textbf{0}$ yield
\begin{equation*}
-\int_\G \nablaG \bv: \nablaG\nablaG \phi
= -\int_\G \nablaG^T \bv: \nablaG\nablaG \phi
= -\int_\G \nablaG^T \bv: \nablaM\nablaG \phi
= \int_\G \bP \divG\nablaG^T \bv \cdot \nablaG \phi.
\end{equation*}
Let us carefully manipulate the quantities appearing in the right-hand side of this last equation.
By means of the product rule and the commutator identity \eqref{commutators:second-tangential-derivatives}, we first write
\begin{align*}
(\bP \divG\nablaG^T \bv)_i
& = P_{ik} (\divG \nablaG^T \bv)_k
= P_{ik} \uD_l (\nablaG^T \bv)_{kl}\\
& = P_{ik} \uD_l (\nablaM^T \bv \, \bP)_{kl}
= P_{ik} \uD_l (\uD_k v_m P_{ml})
= P_{ik} (\uD_l \uD_k v_m) P_{ml} + P_{ik} (\uD_k v_m) (\uD_l P_{ml})\\
& \stackrel{\eqref{commutators:second-tangential-derivatives}}{=}
P_{ik} \left(\uD_k \uD_l v_m + (\bB \nablaM v_m)_k \nu_l - (\bB \nablaM v_m)_l \nu_k  \right) P_{ml}
+ P_{ik} (\uD_k v_m) (\uD_l P_{ml}),
\end{align*}
which by recalling that $\bP = \bI - \bnu \otimes \bnu$ and $\bB \bnu = \textbf{0}$, and identities $\uD_k P_{jl} = -B_{kj} \nu_l -B_{kl} \nu_j$ and $B_{qr} u_r = - \uD_q u_r \nu_r$ (see the proof of \autoref{prop:weak_BL-LB-relation}), can be further rewritten as follows:
\begin{align*}
(\bP \divG\nablaG^T \bv)_i
& = (\uD_i \uD_l v_m) P_{ml} - P_{ik} (\uD_k v_m) B_{ll} \nu_m - P_{ik} (\uD_k v_m) B_{ml} \nu_l\\
& = (\uD_i \uD_l v_m) P_{ml} - \tr(\bB) (\uD_i v_m) \nu_m\\
& = \uD_i \uD_l v_l - (\uD_i \uD_l v_m) \nu_m \nu_l - \tr(\bB) (\uD_i v_m) \nu_m.
\end{align*}
Whence, together with integration-by-parts formula \cite[Lemma B.5]{BouckNochettoYushutin2024}, we get
\begin{multline*}
\int_\G \bP \divG\nablaG^T \bv \cdot \nablaG \phi
= \int_\G \nablaG \divG \bv \cdot \nablaG \phi - \int_\G (\uD_i \uD_l v_m) \nu_m \nu_l \, \uD_i \phi - \int_\G \tr(\bB) \left( \nablaM^T \bv \, \bnu \right) \cdot \nablaG \phi\\
= - \int_\G \divG \bv \, \DeltaG \phi
+ \int_\G (\uD_l v_m) \uD_i (\nu_m \nu_l \, \uD_i \phi) - \int_\G \tr(\bB) (\uD_l v_m) \nu_m \nu_l \cancel{(\uD_i \phi) \nu_i} 
- \int_\G \tr(\bB) \left( \nablaM^T \bv \, \bnu \right) \cdot \nablaG \phi\\
= - \int_\G \divG \bv \, \DeltaG \phi
+ \int_\G (\uD_l v_m) (\uD_i \nu_l) \nu_m \, \uD_i \phi + \int_\G \cancel{(\uD_l v_m) \nu_l} \uD_i (\nu_m \, \uD_i \phi ) - \int_\G \tr(\bB) \left( \nablaM^T \bv \, \bnu \right) \cdot \nablaG \phi\\
= - \int_\G \divG \bv \, \DeltaG \phi + \int_\G (\bB \nablaM^T v \, \bnu) \cdot \nablaG \phi - \int_\G \tr(\bB) \left( \nablaM^T \bv \, \bnu \right) \cdot \nablaG \phi.
\end{multline*}
So far, we have proved that for each $\phi \in C^2(\G)$ and $\bv \in \bC^2(\G)$, we have that
\begin{equation*}
-\int_\G \nablaG \bv: \nablaG\nablaG \phi
= - \int_\G \divG \bv \, \DeltaG \phi + \int_\G (\bB \nablaM^T \bv \, \bnu) \cdot \nablaG \phi - \int_\G \tr(\bB) \left( \nablaM^T \bv \, \bnu \right) \cdot \nablaG \phi;
\end{equation*}
which, by means of a density argument, can be easily extended to $\bv \in \bC^1(\G)$.
In particular, by choosing $\bv \in \bC^1_t(\G)$ and using that $\bB \bv = - (\nablaM^T \bv) \bnu$ (cf.~\eqref{WeingartenExtr-for-tangent}), we arrive at \eqref{exploiting-symmetry-covarianthessian}.
This finishes the proof.
\end{proof-thm}

\section{Riemann curvature tensor}\label{sec:RCT}
This section is devoted to extending the notion of the classical Riemann Curvature tensor \cite{doCarmo1992} to a ``weaker'' setting.
This is instrumental in the proof of \autoref{lem:Poincare} (Poincaré inequality in $\bW^{1,p}_t(\G)$).

Within this section we write $\partial_{\ba} \bv := (\nablaM \bv)\ba$, $D_{\ba} \bv := (\nablaG \bv)\ba$ and $[\ba,\bb] := \partial_{\ba} \bb - \partial_{\bb} \ba$.

For $\G$ of class $C^3$, the Riemann Curvature tensor \cite[\textsection 4.2, Definition 2.1]{doCarmo1992} $R$ is defined by
\begin{equation*}
R(\ba,\bb,\bv) := D_{\ba} D_{\bb} \bv - D_{\bb} D_{\ba} \bv - D_{[\ba,\bb]} \bv \in \bC_t(\G),
\end{equation*}
for all $\ba,\bb \in \bC^1_t(\G)$ and $\bv \in \bC^2_t(\G)$.
Notice that $[\ba,\bb]$ is tangential, because
\begin{equation*}
[\ba,\bb] \cdot \bnu
= \bnu^T(\nablaM \ba)\bb - \bnu^T(\nablaM \bb)\ba
\stackrel{\eqref{WeingartenExtr-for-tangent}}{=} -\ba^T \bB \bb + \bb^T \bB \ba = 0.
\end{equation*}
Relying on the product rule $\nablaM (\alpha \bv) = \alpha \nablaM \bv + \bv \nablaM^T \alpha$, and using the identities $\bP \bnu = \textbf{0}$, $\bP \bB = \bB$ and $\bnu^T \partial_{\bb} \bv \stackrel{\eqref{WeingartenExtr-for-tangent}}{=} - \bb^T \bB \bv$, we proceed as in \cite[Lemma 2.1]{HansboLarsonLarsson2020} to initially obtain
\begin{equation}\label{app:proto-R-reduction}
R(\ba,\bb,\bv)
= \bP ( \partial_{\ba} \partial_{\bb} \bv - \partial_{\bb} \partial_{\ba} \bv - \partial_{[\ba,\bb]} \bv) + (\bv^T \bB \bb) \bB \ba - (\bv^T \bB \ba) \bB \bb.
\end{equation}
Furthermore, by taking into account the easily-verifiable identity
\begin{equation*}
(\partial_{\ba} \partial_{\bb} \bv)_i = \ba^T \left\{ (\nablaM \nablaM v_i)\bb + (\nablaM^T \bb)(\nablaM v_i)  \right\},
\end{equation*}
and its analogue for $(\partial_{\bb} \partial_{\ba} \bv)_i$, and noticing that $\ba^T (\nablaM \nablaM v_i) \bb = \bb^T (\nablaM \nablaM v_i) \ba$ because of \eqref{commutators:second-tangential-derivatives} and the tangentiality of $\ba$ and $\bb$, it is easy to see that $\partial_{\ba} \partial_{\bb} \bv - \partial_{\bb} \partial_{\ba} \bv = \partial_{[\ba,\bb]} \bv$;
hence \eqref{app:proto-R-reduction} reduces to
\begin{equation}\label{app:final-R-reduction}
R(\ba,\bb,\bv)
= (\bv^T \bB \bb) \bB \ba - (\bv^T \bB \ba) \bB \bb, \qquad \forall \ba,\bb \in \bC^1_t(\G), \bv \in \bC^2_t(\G).
\end{equation}
That is, $R$ reduces to a zeroth order differential operator.
Our objective is now to extend $R$ to less regular manifolds and vector fields.

Indeed, assume from now on that $\G$ is only of class $C^2$, integration-by-parts formulas \cite[Ap.~B]{BouckNochettoYushutin2024} induce the following definition: for each $\ba,\bb,\bv \in \bC^1_t(\G)$ we define the ``weak'' Riemann Curvature tensor $R(\ba,\bb,\bv)$ as a linear functional acting on vector fields $\bw \in \bC^1_t(\G)$ defined by
\begin{equation}\label{weak-R-definition}
\langle R(\ba,\bb,\bv), \bw \rangle
= - \int_\G (\nablaG \bv) \bb \cdot \divG(\bw \otimes \ba) + \int_\G (\nablaG \bv) \ba \cdot \divG(\bw \otimes \bb) - \int_\G \Big[\nablaG \bv \,( (\nablaM \bb) \ba - (\nablaM \ba) \bb ) \Big] \cdot \bw.
\end{equation}
By means of the easily-verifiable identity $\divG(\ba \otimes \bb) = (\nablaM \ba)\bb + (\divG \bb) \ba$, we can equivalently rewrite \eqref{weak-R-definition} as
\begin{align*}
\langle R(\ba,\bb,\bv), \bw \rangle
& = \int_\G (\nablaG \bv) (\ba \otimes \bb - \bb \otimes \ba) : \nablaM \bw + \int_\G \Big[ \nablaG \bv \, \left( (\nablaM \ba) \bb + (\divG \bb)\ba - (\nablaM \bb)\ba  - (\divG \ba)\bb    \right) \Big] \cdot \bw\\
& = \int_\G (\nablaG \bv) (\ba \otimes \bb - \bb \otimes \ba) : \nablaM \bw + \int_\G \Big[ \nablaG \bv \, \divG(\ba \otimes \bb - \bb \otimes \ba) \Big] \cdot \bw.
\end{align*}
Moreover, by using the identities $\nablaG \bv = \nablaM \bv + \bnu \bv^T \bB$ (cf.~\eqref{ExtCov-for-tangent} and $\bB \bw = -(\nablaM^T \bw)\bnu$ (cf.~\eqref{WeingartenExtr-for-tangent}), and the tangentiality of $\bw$, we can further write
\begin{multline*}
\langle R(\ba,\bb,\bv), \bw \rangle
= \int_\G ((\bv^T \bB \bb) \bB \ba - (\bv^T \bB \ba) \bB \bb) \cdot \bw\\
+ \int_\G \Big\{\left[\nablaM \bv (\ba \otimes \bb - \bb \otimes \ba)\right] : \nablaM \bw + \left[ \nablaM \bv \, \divG(\ba \otimes \bb - \bb \otimes \ba) \right] \cdot \bw\Big\} .
\end{multline*}
We now aim to prove that this last integral vanishes.
For ease of notation, let $\bA$ be the skew-symmetric tensor field given by $\bA := \ba \otimes \bb - \bb \otimes \ba$.
Adopting the Einstein summation convention, we have that
\begin{multline}\label{app:manipulate-residue}
(\nablaM \bv) (\ba \otimes \bb - \bb \otimes \ba) : \nablaM \bw + \Big[ \nablaM \bv \, \divG(\ba \otimes \bb - \bb \otimes \ba) \Big] \cdot \bw\\
= (\nablaM \bv) \bA : \nablaM \bw + \Big[ \nablaM \bv \, \divG\bA \Big] \cdot \bw
= (\nablaM \bv \, \bA)_{ij} (\nablaM \bw)_{ij} + (\nablaM \bv \, \divG\bA)_i w_i\\
= (\nablaM \bv)_{ik} A_{kj} (\nablaM \bw)_{ij} + (\nablaM \bv)_{ik} (\divG\bA)_k w_i\\
= \uD_k v_i A_{kj} \uD_j w_i + \uD_k v_i \uD_j A_{kj} w_i
= \uD_k v_i \uD_j (A_{kj} w_i ).
\end{multline}
Now, let us notice that combining the integration-by-parts formula \cite[Lemma B.5]{BouckNochettoYushutin2024} with identity \eqref{commutators:second-tangential-derivatives} and using a density argument, we easily arrive at the following identity:
\begin{equation}\label{app:weak-commutator}
\int_\G \uD_i u \, \uD_j \varphi - \uD_j u \, \uD_i \varphi = \int_\G \left\{(\bB \nablaM u)_j \nu_i - (\bB \nablaM u)_i \nu_j + \tr(\bB) \left( \uD_i u \, \nu_j - \uD_j u \,\nu_i  \right)\right\} \varphi,
\end{equation}
for each $u,\varphi \in C^1(\G)$ and for each indices $i,j \in \{1,\dotsc,d+1\}$.
Accordingly, applying this last identity to the right-hand side of \eqref{app:manipulate-residue}, recalling the fact that $\bA = \ba \otimes \bb - \bb \otimes \ba$ is skew-symmetric and that $\ba$ and $\bb$ are tangential, we arrive at
\begin{multline*}
\int_\G \uD_k v_i \uD_j (A_{kj} w_i )
\stackrel{\eqref{app:weak-commutator}}{=} \int_\G \uD_j v_i \uD_k (A_{kj} w_i )
+ \int_\G \left\{(\bB \nablaM v_i)_j \nu_k - (\bB \nablaM v_i)_k \nu_j + \tr(\bB) \left( \uD_k v_i \, \nu_j - \uD_j v_i \,\nu_k  \right)\right\} A_{kj} w_i\\
= \int_\G \uD_j v_i \uD_k (A_{kj} w_i )
= - \int_\G \uD_j v_i \uD_k (A_{jk} w_i )
= - \int_\G \uD_k v_i \uD_j (A_{kj} w_i ).
\end{multline*}
Hence, $\int_\G \uD_k v_i \uD_j (A_{kj} w_i ) = 0$ and so
\begin{equation}\label{weak-RCT-simplified}
\langle R(\ba,\bb,\bv), \bw \rangle
= \int_\G ((\bv^T \bB \bb) \bB \ba - (\bv^T \bB \ba) \bB \bb) \cdot \bw, \qquad \ba,\bb,\bv,\bw \in \bC^1_t(\G).
\end{equation}

\bibliographystyle{abbrv}
\bibliography{references}

\begin{thebibliography}{10}

\bibitem{AbelsDolzmann2014}
H.~Abels, G.~Dolzmann, and Y.~Liu.
\newblock Well-posedness of a fully coupled {N}avier-{S}tokes/{Q}-tensor system with inhomogeneous boundary data.
\newblock {\em SIAM J. Math. Anal.}, 46(4):3050--3077, 2014.

\bibitem{ADN1964}
S.~Agmon, A.~Douglis, and L.~Nirenberg.
\newblock Estimates near the boundary for solutions of elliptic partial differential equations satisfying general boundary conditions. {II}.
\newblock {\em Comm. Pure Appl. Math.}, 17:35--92, 1964.

\bibitem{AmroucheGirault1994}
C.~Amrouche and V.~Girault.
\newblock Decomposition of vector spaces and application to the {S}tokes problem in arbitrary dimension.
\newblock {\em Czechoslovak Math. J.}, 44(119)(1):109--140, 1994.

\bibitem{BeiraodaVeiga1997}
H.~Beir\~ao~da Veiga.
\newblock A new approach to the {$L^2$}-regularity theorems for linear stationary nonhomogeneous {S}tokes systems.
\newblock {\em Portugal. Math.}, 54(3):271--286, 1997.

\bibitem{BenavidesNochettoShakipov2025-a}
G.~A. Benavides, R.~H. Nochetto, and M.~Shakipov.
\newblock $\mathrm{L}^p$-based {S}obolev theory for {PDE}s on closed manifolds of minimal regularity: {S}calar elliptic equations.
\newblock arXiv:2603.02163.

\bibitem{BerchioFalocchiPatriarca2024}
E.~Berchio, A.~Falocchi, and C.~Patriarca.
\newblock On the long-time behaviour of solutions to unforced evolution {N}avier-{S}tokes equations under {N}avier boundary conditions.
\newblock {\em Nonlinear Anal. Real World Appl.}, 79:Paper No. 104102, 22, 2024.

\bibitem{BernardiCanutoMaday:1988}
C.~Bernardi, C.~Canuto, and Y.~Maday.
\newblock Generalized inf-sup conditions for {C}hebyshev spectral approximation of the {S}tokes problem.
\newblock {\em SIAM J. Numer. Anal.}, 25(6):1237--1271, 1988.

\bibitem{Berselli2014}
L.~C. Berselli.
\newblock A note on strong solutions to the {S}tokes system.
\newblock {\em Acta Appl. Math.}, 134:123--131, 2014.

\bibitem{BjorlandEtAl:2011}
C.~Bjorland, L.~Brandolese, D.~s. Iftimie, and M.~E. Schonbek.
\newblock {$L^p$}-solutions of the steady-state {N}avier-{S}tokes equations with rough external forces.
\newblock {\em Comm. Partial Differential Equations}, 36(2):216--246, 2011.

\bibitem{BonitoDemlowNochetto2020}
A.~Bonito, A.~Demlow, and R.~H. Nochetto.
\newblock Finite element methods for the {L}aplace-{B}eltrami operator.
\newblock In {\em Geometric partial differential equations. {P}art {I}}, volume~21 of {\em Handb. Numer. Anal.}, pages 1--103. Elsevier/North-Holland, Amsterdam, 2020.

\bibitem{BouckNochettoYushutin2024}
L.~Bouck, R.~H. Nochetto, and V.~Yushutin.
\newblock A hydrodynamical model of nematic liquid crystal films with a general state of orientational order.
\newblock {\em J. Nonlinear Sci.}, 34(1):Paper No. 5, 63, 2024.

\bibitem{Brezis2011}
H.~Brezis.
\newblock {\em Functional analysis, {S}obolev spaces and partial differential equations}.
\newblock Universitext. Springer, New York, 2011.

\bibitem{Brezzi1974}
F.~Brezzi.
\newblock On the existence, uniqueness and approximation of saddle-point problems arising from {L}agrangian multipliers.
\newblock {\em Rev. Fran\c caise Automat. Informat. Recherche Op\'erationnelle S\'er. Rouge}, 8:129--151, 1974.

\bibitem{BridsonHaefliger1999}
M.~R. Bridson and A.~Haefliger.
\newblock {\em Metric spaces of non-positive curvature}, volume 319 of {\em Grundlehren der mathematischen Wissenschaften [Fundamental Principles of Mathematical Sciences]}.
\newblock Springer-Verlag, Berlin, 1999.

\bibitem{BuragoBuragoIvanov2001}
D.~Burago, Y.~Burago, and S.~Ivanov.
\newblock {\em A course in metric geometry}, volume~33 of {\em Graduate Studies in Mathematics}.
\newblock American Mathematical Society, Providence, RI, 2001.

\bibitem{Cattabriga1961}
L.~Cattabriga.
\newblock Su un problema al contorno relativo al sistema di equazioni di {S}tokes.
\newblock {\em Rend. Sem. Mat. Univ. Padova}, 31:308--340, 1961.

\bibitem{Ciarlet2013}
P.~G. Ciarlet.
\newblock {\em Linear and nonlinear functional analysis with applications}.
\newblock Society for Industrial and Applied Mathematics, Philadelphia, PA, 2013.

\bibitem{ConstantinFoias1988}
P.~Constantin and C.~Foias.
\newblock {\em Navier-{S}tokes equations}.
\newblock Chicago Lectures in Mathematics. University of Chicago Press, Chicago, IL, 1988.

\bibitem{DindosMitrea:2004}
M.~Dindo\v{s} and M.~Mitrea.
\newblock The stationary {N}avier-{S}tokes system in nonsmooth manifolds: the {P}oisson problem in {L}ipschitz and {$C^1$} domains.
\newblock {\em Arch. Ration. Mech. Anal.}, 174(1):1--47, 2004.

\bibitem{doCarmo1992}
M.~P. do~Carmo.
\newblock {\em Riemannian geometry}.
\newblock Mathematics: Theory \& Applications. Birkh\"auser Boston, Inc., Boston, MA, 1992.

\bibitem{DoostmohammadiEtAl2018}
A.~Doostmohammadi, J.~Ignés-Mullol, J.~M. Yeomans, and F.~Sagués.
\newblock Active nematics.
\newblock {\em Nat. Commun.}, 9(1):3246, Aug. 2018.

\bibitem{ADN1955}
A.~Douglis and L.~Nirenberg.
\newblock Interior estimates for elliptic systems of partial differential equations.
\newblock {\em Comm. Pure Appl. Math.}, 8:503--538, 1955.

\bibitem{DziukElliott2013}
G.~Dziuk and C.~M. Elliott.
\newblock Finite element methods for surface {PDE}s.
\newblock {\em Acta Numer.}, 22:289--396, 2013.

\bibitem{EbinMarsden:1970}
D.~G. Ebin and J.~Marsden.
\newblock Groups of diffeomorphisms and the motion of an incompressible fluid.
\newblock {\em Ann. of Math. (2)}, 92:102--163, 1970.

\bibitem{ElliottStinner2010}
C.~M. Elliott and B.~Stinner.
\newblock A surface phase field model for two-phase biological membranes.
\newblock {\em SIAM J. Appl. Math.}, 70(8):2904--2928, 2010.

\bibitem{ErnGuermond2004}
A.~Ern and J.-L. Guermond.
\newblock {\em Theory and practice of finite elements}, volume 159 of {\em Applied Mathematical Sciences}.
\newblock Springer-Verlag, New York, 2004.

\bibitem{Evans2010}
L.~C. Evans.
\newblock {\em Partial {D}ifferential {E}quations}, volume~19 of {\em Graduate Studies in Mathematics}.
\newblock American Mathematical Society, Providence, RI, second edition, 2010.

\bibitem{GaldiKyed2016}
G.~P. Galdi and M.~Kyed.
\newblock Time-periodic solutions to the {N}avier-{S}tokes equations.
\newblock In {\em Handbook of mathematical analysis in mechanics of viscous fluids}, pages 509--578. Springer, Cham, 2018.

\bibitem{Gatica2014}
G.~N. Gatica.
\newblock {\em A simple introduction to the mixed finite element method}.
\newblock SpringerBriefs in Mathematics. Springer, Cham, 2014.
\newblock Theory and applications.

\bibitem{Gatica2024}
G.~N. Gatica.
\newblock {\em Introducción al análisis funcional}.
\newblock Editorial Reverté, 2024.
\newblock Teoria y Aplicaciones. (Segunda edición).

\bibitem{Ghidaglia1984}
J.-M. Ghidaglia.
\newblock R\'egularit\'e{} des solutions de certains probl\`emes aux limites lin\'eaires li\'es aux \'equations d'{E}uler.
\newblock {\em Comm. Partial Differential Equations}, 9(13):1265--1298, 1984.

\bibitem{GiaquintaModica1982}
M.~Giaquinta and G.~Modica.
\newblock Nonlinear systems of the type of the stationary {N}avier-{S}tokes system.
\newblock {\em J. Reine Angew. Math.}, 330:173--214, 1982.

\bibitem{GilbargTrudinger2001}
D.~Gilbarg and N.~S. Trudinger.
\newblock {\em Elliptic {P}artial {D}ifferential {E}quations of {S}econd {O}rder}.
\newblock Classics in Mathematics. Springer-Verlag, Berlin, 2001.
\newblock Reprint of the 1998 edition.

\bibitem{HansboLarsonLarsson2020}
P.~Hansbo, M.~G. Larson, and K.~Larsson.
\newblock Analysis of finite element methods for vector {L}aplacians on surfaces.
\newblock {\em IMA J. Numer. Anal.}, 40(3):1652--1701, 2020.

\bibitem{HebeyRobert2008}
E.~Hebey and F.~Robert.
\newblock Sobolev spaces on manifolds.
\newblock In {\em Handbook of global analysis}, pages 375--415, 1213. Elsevier Sci. B. V., Amsterdam, 2008.

\bibitem{JankuhnOlshanskiiReusken2018}
T.~Jankuhn, M.~A. Olshanskii, and A.~Reusken.
\newblock Incompressible fluid problems on embedded surfaces: modeling and variational formulations.
\newblock {\em Interfaces Free Bound.}, 20(3):353--377, 2018.

\bibitem{KobaLiuGiga2017}
H.~Koba, C.~Liu, and Y.~Giga.
\newblock Energetic variational approaches for incompressible fluid systems on an evolving surface.
\newblock {\em Quart. Appl. Math.}, 75(2):359--389, 2017.

\bibitem{LagerwallScalia2012}
J.~P. Lagerwall and G.~Scalia.
\newblock A new era for liquid crystal research: Applications of liquid crystals in soft matter nano-, bio- and microtechnology.
\newblock {\em Curr. Appl. Phys.}, 12(6):1387--1412, 2012.

\bibitem{LariosLunasinTiti2013}
A.~Larios, E.~Lunasin, and E.~S. Titi.
\newblock Global well-posedness for the 2{D} {B}oussinesq system with anisotropic viscosity and without heat diffusion.
\newblock {\em J. Differential Equations}, 255(9):2636--2654, 2013.

\bibitem{Leoni:2009}
G.~Leoni.
\newblock {\em A first course in {S}obolev spaces}, volume 105 of {\em Graduate Studies in Mathematics}.
\newblock American Mathematical Society, Providence, RI, 2009.

\bibitem{Matthew2014}
M.~M. Lukac.
\newblock Continuously diagonalizing the shape operator.
\newblock {\em Rose-Hulman Undergrad. Math. J.}, 15(2):75--99, 2014.

\bibitem{MitreaTaylor:2001}
M.~Mitrea and M.~Taylor.
\newblock Navier-{S}tokes equations on {L}ipschitz domains in {R}iemannian manifolds.
\newblock {\em Math. Ann.}, 321(4):955--987, 2001.

\bibitem{Miura-PartIII}
T.-H. Miura.
\newblock Navier-{S}tokes equations in a curved thin domain, {P}art {III}: {T}hin-film limit.
\newblock {\em Adv. Differential Equations}, 25(9-10):457--626, 2020.

\bibitem{Miura-PartII}
T.-H. Miura.
\newblock Navier-{S}tokes equations in a curved thin domain, {P}art {II}: {G}lobal existence of a strong solution.
\newblock {\em J. Math. Fluid Mech.}, 23(1):Paper No. 7, 60, 2021.

\bibitem{Miura-PartI}
T.-H. Miura.
\newblock Navier-{S}tokes equations in a curved thin domain, {P}art {I}: {U}niform estimates for the {S}tokes operator.
\newblock {\em J. Math. Sci. Univ. Tokyo}, 29(2):149--256, 2022.

\bibitem{Miura-PartIII-erratum}
T.-H. Miura.
\newblock {Erratum to “Navier–Stokes equations in a curved thin domain, Part III: thin-film limit”}.
\newblock {\em Adv. Differential Equations}, 28(3/4):341 -- 346, 2023.

\bibitem{Miura2025}
T.-H. Miura.
\newblock Thin-film limit of the {G}inzburg--{L}andau heat flow in a curved thin domain.
\newblock {\em J. Differential Equations}, 422:1--56, 2025.

\bibitem{Morrey2008}
C.~B. Morrey, Jr.
\newblock {\em Multiple integrals in the calculus of variations}.
\newblock Classics in Mathematics. Springer-Verlag, Berlin, 2008.
\newblock Reprint of the 1966 edition [MR 0202511].

\bibitem{NeedlemanDogic2017}
D.~Needleman and Z.~Dogic.
\newblock Active matter at the interface between materials science and cell biology.
\newblock {\em Nat. Rev. Mater.}, 2(9):17048, July 2017.

\bibitem{Nicolaides1982}
R.~A. Nicolaides.
\newblock Existence, uniqueness and approximation for generalized saddle point problems.
\newblock {\em SIAM J. Numer. Anal.}, 19(2):349--357, 1982.

\bibitem{Nitschke2018}
I.~Nitschke, M.~Nestler, S.~Praetorius, H.~Löwen, and A.~Voigt.
\newblock Nematic liquid crystals on curved surfaces: a thin film limit.
\newblock {\em Proc. R. Soc. A.}, 474(2214):20170686, June 2018.

\bibitem{NitschkeReutherVoigt2019}
I.~Nitschke, S.~Reuther, and A.~Voigt.
\newblock Hydrodynamic interactions in polar liquid crystals on evolving surfaces.
\newblock {\em Phys. Rev. Fluids}, 4:044002, Apr 2019.

\bibitem{ORZ2021}
M.~A. Olshanskii, A.~Reusken, and A.~Zhiliakov.
\newblock Inf-sup stability of the trace {$\mathbf{P}_2$}--{$P_1$} {T}aylor-{H}ood elements for surface {PDE}s.
\newblock {\em Math. Comp.}, 90(330):1527--1555, 2021.

\bibitem{Reusken2020}
A.~Reusken.
\newblock Stream function formulation of surface {S}tokes equations.
\newblock {\em IMA J. Numer. Anal.}, 40(1):109--139, 2020.

\bibitem{RobinsonRodrigoSadowski2016}
J.~C. Robinson, J.~L. Rodrigo, and W.~Sadowski.
\newblock {\em The three-dimensional {N}avier-{S}tokes equations}, volume 157 of {\em Cambridge Studies in Advanced Mathematics}.
\newblock Cambridge University Press, Cambridge, 2016.
\newblock Classical theory.

\bibitem{Rosenberg1997}
S.~Rosenberg.
\newblock {\em The {L}aplacian on a {R}iemannian manifold}, volume~31 of {\em London Mathematical Society Student Texts}.
\newblock Cambridge University Press, Cambridge, 1997.
\newblock An introduction to analysis on manifolds.

\bibitem{MathStack-counterexample-injective-1}
Y.~Savchuk~(https://math.stackexchange.com/users/84233/{yurii-savchuk}).
\newblock Densely injective bounded linear map that is not injective.
\newblock Mathematics Stack Exchange.
\newblock URL:https://math.stackexchange.com/q/718092 (version: 2023-08-25).

\bibitem{Shahshahani2016}
S.~Shahshahani.
\newblock {\em An introductory course on differentiable manifolds}.
\newblock Dover Publications, Inc, Mineola, New York, 2016.

\bibitem{Shiffrin2024private}
T.~Shifrin.
\newblock {Private Communication}, November 2024.

\bibitem{Taylor2011}
M.~E. Taylor.
\newblock {\em Partial differential equations {III}. {N}onlinear equations}, volume 117 of {\em Applied Mathematical Sciences}.
\newblock Springer, New York, second edition, 2011.

\bibitem{TaylorII_2023}
M.~E. Taylor.
\newblock {\em Partial {Differential} {Equations} {II}: {Qualitative} {Studies} of {Linear} {Equations}}, volume 116 of {\em Applied {Mathematical} {Sciences}}.
\newblock Springer International Publishing, 2023.

\bibitem{TaylorIII_2023}
M.~E. Taylor.
\newblock {\em Partial {Differential} {Equations} {III}: {Nonlinear} {Equations}}, volume 117 of {\em Applied {Mathematical} {Sciences}}.
\newblock Springer International Publishing, 2023.

\bibitem{Temam2001}
R.~Temam.
\newblock {\em Navier–{S}tokes {E}quations: {T}heory and {N}umerical {A}nalysis}.
\newblock AMS Chelsea Publishing, Providence, RI, 2001.
\newblock Reprint of the 1984 edition.

\bibitem{Thorpe1994}
J.~A. Thorpe.
\newblock {\em Elementary topics in differential geometry}.
\newblock Undergraduate Texts in Mathematics. Springer-Verlag, New York, 1994.
\newblock Corrected reprint of the 1979 original.

\bibitem{Wang2013-lecturenotes}
Z.~Wang.
\newblock Lecture 16: Variation formulae.
\newblock Lecture notes for Riemannian Geometry (Spring 2024). School of Mathematical Sciences, University of Science and Technology of China.
\newblock URL:http://staff.ustc.edu.cn/\%7Ewangzuoq/Courses/24S-RiemGeom/ (retrieved 2025-01-02).

\end{thebibliography}
\end{document}